\documentclass[11pt]{article}
\setlength{\textheight}{24cm}
\usepackage{color,indentfirst}
\usepackage{mathrsfs}
\usepackage{url}

\usepackage{graphicx}
\usepackage{graphics}
\usepackage{wrapfig}

\usepackage{amssymb,amsthm,amsmath,mathtools,enumitem,comment}
\usepackage{amsthm}
\textwidth=14.7cm
\textheight=18.1cm
\usepackage{a4wide}
\textwidth=1.04\textwidth

\theoremstyle{plain}
\newtheorem{thm}{Theorem}[section]
\newtheorem{lem}[thm]{Lemma}
\newtheorem{prp}[thm]{Proposition}

\theoremstyle{definition}

\newtheorem{dfn}{Definition}[section]

\let\oldmarginpar\marginpar
\renewcommand{\marginpar}[1]{\oldmarginpar{\scriptsize\texttt{\color{red}{#1}}}}

\numberwithin{equation}{section}

\newlength{\currentparskip} 
\setlength{\currentparskip}{\parskip}

\newcommand{\ex}{\mathrm{e}}
\newcommand{\di}{\mathrm{d}}

\newcommand{\be}{\begin{equation}}
\newcommand{\ee}{\end{equation}}
\newcommand{\ben}{\begin{equation*}}
\newcommand{\een}{\end{equation*}}

\newcommand{\ba}{\begin{equation}\begin{aligned}}
\newcommand{\ea}{\end{aligned}\end{equation}}
\newcommand{\ban}{\begin{equation*}\begin{aligned}}
\newcommand{\ean}{\end{aligned}\end{equation*}}

%%%%%%%%%%%%%%%%%%%%%%%%%%%%%%%%%%%%%%%%%%%%%%%%%%

 \date{}

\title{Geodesic random walks, diffusion processes  and Brownian motion on Finsler manifolds.}
\author{Tianyu Ma, Vladimir S.\ Matveev and Ilya Pavlyukevich}
\begin{document}
\maketitle

\begin{abstract}
We show that geodesic random walks on a complete Finsler manifold of bounded geometry converge to a diffusion process
which is, up to a drift, the Brownian motion corresponding to a Riemannian metric. 
\\[2ex]
{\em MSC 2000:} 53B40, 53C60, 82B41, 82C41
\\[2ex]
{\em Key words:} Geodesic random walks, weak convergence, Finsler  manifold, diffusion process, Riemannian Brownian motion, averaged metric,
bounded geometry. 
\end{abstract}
\begin{small}
\tableofcontents
\end{small}
 
\section{Introduction.} 
Many processes in physics and natural sciences can be described with the help of random 
walks and their limit processes, the so-called diffusion processes. 
A possible philosophical explanation of this experimentally observed phenomenon is that the limit of random walks reflects the microscopic nature of the situation: Even 
fully deterministic microscopic systems can give rise to erratic seemingly 
random motions, practically indistinguishable from those produced by a stochastic process.

 Let us recall one of the first constructions of a random walk which is due to K.\ Pearson 
 in 1905 \cite{Pearson}. A more physically motivated approach is in the paper 
 \cite{Einstein} of A.\ Einstein from the same year.

We start from a point $p\in \mathbb{R}^2$, choose a random direction at the tangent space, go for distance $1$ along the straight line starting at this direction, and then repeat the procedure iteratively.  We obtain a stochastic process whose trajectories are 
piecewise-linear curves, see  Fig.~\ref{fig:0}. 

\begin{wrapfigure}{r}{0.4\textwidth}
\includegraphics[width=0.39\textwidth]{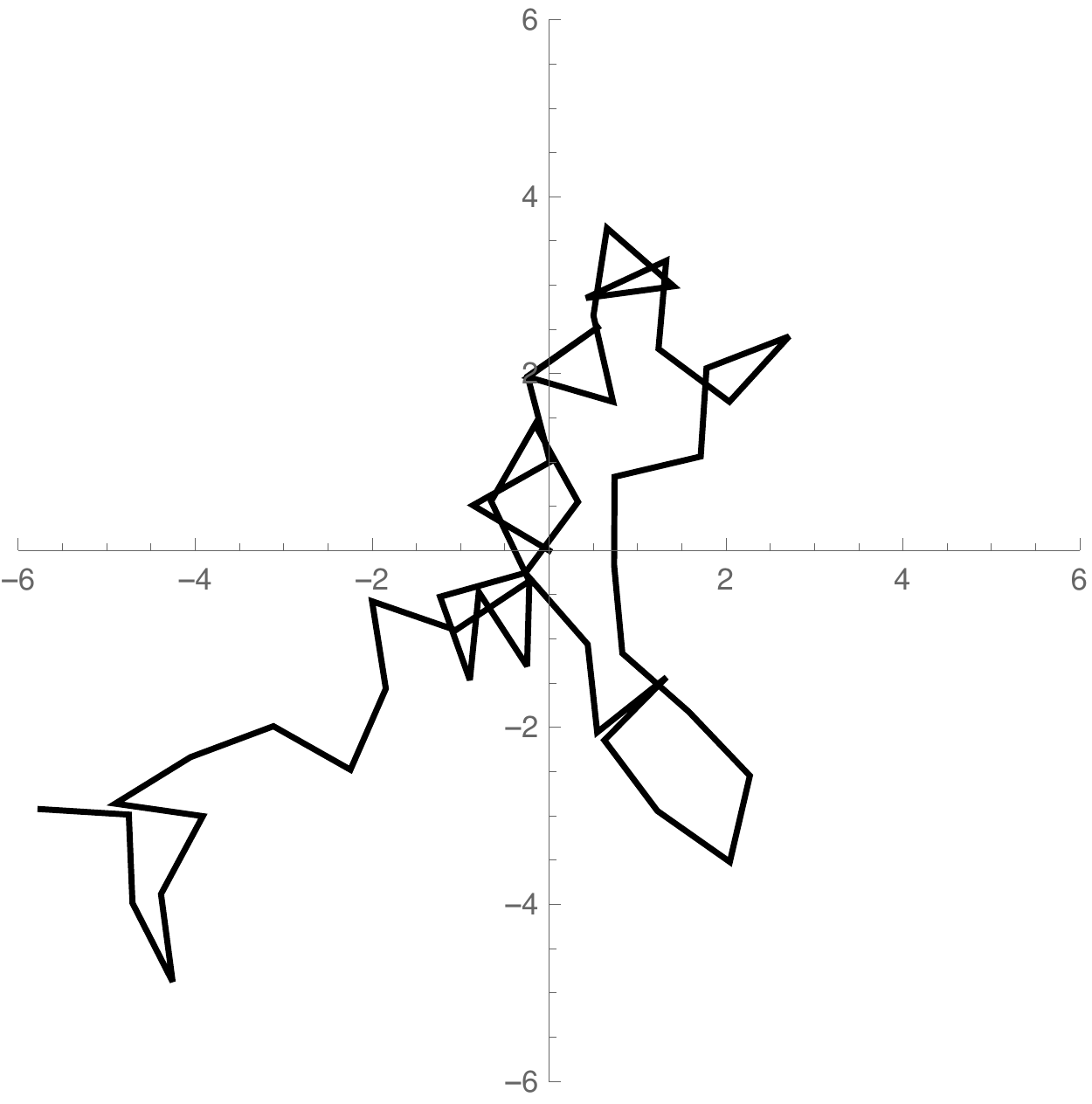}  
\caption{50 steps of a Pearson random walk. \label{fig:0}}
\end{wrapfigure}

 It is natural to  {\it renormalise} this process  
as follows: we assume that the steps have length not $1$ but $1/\sqrt{N}$ and we take $N$ steps in one unit of time.  
If the  procedure of choosing the random direction is invariant with respect to the isometry group of the flat $\mathbb{R}^2$ 
which was the case in \cite{Einstein,Pearson}, 
then by the Functional Central Limit Theorem the limit of this sequence as $N\to \infty$ exists and is the 
{\it (flat) Brownian motion}, see \cite[Chapter 2]{billingsley2013convergence}.

We see that in order to define  such a random walk, one needs two ingredients: the rule of choosing a random direction 
at a current position $p$ (i.e.,  a probability distribution $\nu_p$ 
on the space of tangent vectors at the point $p$) and an analogue of the notion of a  straight line, which describes the motion of a small particle with no external forces acting upon it.

In many systems in physics and natural sciences, small particles with no external forces acting upon them move along geodesics of a Finsler metric. We give necessary  definitions in \S \ref{sec:1.2.1}. Recall that geodesics are smooth curves and, similar to the straight lines,  
the initial point and the initial velocity vector determine the geodesic.  
The above definition of the random walk is immediately generalised to this case. 
Indeed, starting from a point $p$ of a Finsler manifold $(M,F)$  such that every tangent space $T_pM$ is equipped with a probability measure  $\nu_p$, choose a random vector $v$ in the tangent space, go the distance $F(v)/\sqrt{N}$ along the geodesic starting at $p$ with the  initial 
velocity $v$ and then repeat the procedure (if $\nu_p$ is not centered we rescale it as in \S \ref{s:RRW}). We obtain a stochastic process whose trajectories are piecewise geodesic curves (e.g., they are  glued together from geodesic segments). 

The present paper studies such geodesic random walks on Finsler manifolds and their limit diffusion processes, and concentrates on the 
fundamental question of the existence and uniqueness of the limit process.
Our main result is that under assumptions natural from the viewpoint of Finsler geometry,
the limit process exists and is unique. Moreover, it is a diffusion process whose 
generator is a non-degenerate elliptic second order partial differential operator for which we give a precise formula.  

The Riemannian version of our result (recall that Riemannian metrics are Finsler metrics) was obtained e.g.\ by E.\ J\o{}rgensen \cite{Jorgensen}.   

Geodesic random walks on Finsler manifolds and their limit processes are 
 of course  natural topics from the viewpoint of both differential geometry and theory of stochastic processes. They may have applied interest since Finsler manifolds are used to model different physical situations with anisotropies at an infinitesimal level, see e.g.\ \cite{Antonelli1,Antonelli2,Caponio, Cvetic,Gibbons, Hohmann, Markvorsen, Pfeifer,  Yajima}, and may also be used for certain models in information geometry, see e.g.\ \cite{shen}.

Although Brownian motions and diffusion processes on Finsler 
manifolds were discussed in the literature (see  e.g.\ the 
books \cite{Antonelli3,Wilke}), the very basic question of the existence and uniqueness of the limit process for Finsler geodesic random walks has not been  rigorously treated.

More precisely, the work \cite{Wilke} on Finsler Brownian motions goes in the other direction: 
It starts from a stochastic differential equation which is constructed by a Finsler metric $F$, a volume form $\mu$, and an extra data $u_0\in H^1_0(M)$ on $M$. 
It is easy to see that for a generic Finsler metric, solutions of this stochastic differential equation do not  correspond to a limit process of a sequence of geodesic  random walks.  

This general approach, in which one starts with an elliptic differential operator (or a Dirichlet form)
in order to construct a diffusion process, is a very popular and powerful approach to diffusion processes on metric spaces. It allows in particular   
  to treat the case of non-smooth background metric structures, see e.g.\ \cite{Gigli,Kuwae,sturm1998diffusion}. 
  This approach does not ensure that the resulting stochastic process is the  limit process of a sequence of random walks. If the background is almost Riemannian 
  (say, Alexandrov with bounded curvature, as in \cite{Gigli} and \cite{Kuwae}), the best one can do is to relate random walks on the Riemannian spaces approximating our metric space to the diffusion process on our metric space. These results  cannot be applied in the  Finslerian situation, since Finsler metrics cannot be approximated by Riemannian metrics. Our results will possibly allow to extend this group of methods to a 
  Finslerian situation and we plan to do this in our future works.

Let us now discuss the corresponding results of the book \cite{Antonelli3}, where  many different approaches of 
constructing different non-equivalent diffusion processes  (on the manifold or on the tangent bundle to the manifold) 
by a Finsler metric are suggested.  One of these approaches (see \cite[\S A2]{Antonelli3}) is seemingly close to  
ours, and considers the limit processes of Finsler geodesics random walks 
(in their case, the distribution $\nu_p$ is quite special and is canonically constructed by the Finsler metric). 
Unfortunately no rigorous proof of convergence is given: it is merely  claimed  that the limit process exists and is unique, and referred  to \cite{Pinsky1,Pinsky2} for methods and  technical details. 

The references  \cite{Pinsky1,Pinsky2} are mostly survey papers about  geodesic random walks on  Riemannian manifolds. 
The  methods discussed there assume  and rely on the special form of the probability measure $\nu_p$ on tangent spaces. Moreover, it is assumed that the Riemannian manifold  is {\it stochastically complete}. The property of stochastic completeness is a nontrivial property, and examples show that not all complete manifolds are stochastically complete. In the Riemannian case, there is 
a number of criteria of stochastic completeness, see  e.g.\ 
\cite{Grigoryan, Yau}. In particular, if the Ricci curvature of a complete Riemannian manifold is bounded from below, the manifold  is stochastically complete. In the Finslerian situation, we did not find any relevant works on  stochastic completeness and the claim of  \cite[\S A2]{Antonelli3}  that the  methods of   \cite{Pinsky1, Pinsky2} can easily be applied in the Finslerian situation looks overoptimistic.

Note that as a by-product, we have proved that every  complete Finsler 		manifold of {\it bounded geometry}  (see Definition  \ref{definition:bounded geometry})   is stochastically complete; that is,  the  limit process of   Finsler geodesic random walks  is stochastically complete in the sense of \cite[\S 4.2]{Hsu}. It is interesting to try to relax the assumption of bounded geometry in this statement and we plan to do this in future works. 		 

A very successful   approach to geodesic random walks and diffusion processes on  Riemannian  manifolds, 
which allows essential freedom in the choice of the probability measures $\nu_p$, is in  \cite{Jorgensen}. 
Many arguments in \cite{Jorgensen}   are based on the following property
which holds in the Riemannian but not in the Finslerian case: Consider an  arc-length parametrized 
geodesic segment $\gamma\colon [0,\varepsilon]\rightarrow M$ of a (smooth) Riemannian metric. Take a vector $v\in T_{\gamma(0)}M$ of length one and its parallel transport $v_\varepsilon\in  T_{\gamma(0)}M$ along the geodesic segment. Next, consider the  arc-length parametrised geodesic geodesics $\gamma_{v}$ and $\gamma_{v_\varepsilon}$ 
which start from $\gamma(0)$ and $\gamma(\varepsilon)$ with the initial vectors $v$ and $v_\varepsilon$, respectively. 
Then the distance between $\gamma_{v}(t)$ and $\gamma_{v_\varepsilon}(t)$, 
behaves, 
for $\varepsilon \to 0$ 
and $t\to 0$, as $\varepsilon(1+C t^2)$.
In the Euclidean case, the distance does not depend on $t$ at all and is equal to $\varepsilon$.  
In the Finslerian situation, this property does not hold for a generic metric and 
a straightforward generalisation of \cite{Jorgensen} is not possible. 

In this paper we prove that under the assumptions natural from the viewpoint 
of Finsler geometry (everything is smooth, 
the manifold is complete and has bounded geometry), the sequence of geodesic random walks
converges to a unique diffusion process, see Theorem \ref{thm:convergence main theorem}. 
Moreover, we show that the generator of this diffusion process is an elliptic operator, and give an integral formula for its coefficients.

As explained above, the generator of the limit diffusion process is a non-degenerate elliptic operator.
If the probability measure $\nu_p$ on each $T_pM$ is constructed by $F_{|T_pM}$ (we give examples in \S \ref{sec:average}) then
this elliptic operator is a natural candidate for a Beltrami--Laplace operator of the Finsler metric. 
Note that, different from the Riemannian case, there exist many different 
Finslerian analogues of the Beltrami--Laplace operator. 
We refer to \cite{antonelli2012theory}, where many different constructions of the Riemannian 
Beltrami--Laplace operator are  mimicked in  the Finslerian setting. In the Riemannian case they 
all give the same Beltrami--Laplace operator. In the Finslerian case one obtains different  operators.  
Most operators in \cite{antonelli2012theory} are linear but  there also exist nonlinear versions of the Finslerian Betrami--Laplace operators, see e.g.\  \cite{Ohta,shen_laplacian}. 

An interesting by-product of our result is that the generator of the limit diffusion 
process corresponding to Finsler geodesic 
random walks coincides, up to first order terms (the so-called ``drift''), with that of a {\it Riemannian Brownian motion}. 
 This result of us explains why it is hard or even impossible to experimentally distinguish a diffusion 
process coming from a Riemannian metric from that of coming from a Finsler metric. See \S \ref{sec:1.4} for more details.

Naturally, the topic of this paper, and therefore also the methods of the proof, 
belong both to differential geometry and to the theory of stochastic processes. The group of the methods coming from stochastic processes 
is actually standard for this type of problems (though nontrivial) and was understood at least in the 70th-80th.   
The novelty which allowed to solve this natural and actively attacked  problem came from Finsler geometry, 
and the key lemma  is Lemma \ref{lemma:generator bounds}, whose proof uses a nontrivial and not widely known result of \cite[\S 15]{shen2001lectures}.

\subsection*{Acknowledgements.} We thank M.\ von  Renesse for useful discussions.
T.M.\ and  V.M.\ thank the DFG for the financial support (Einzelprojekt MA 2565/6).

\section{Setting and results. } \label{sec:1.2}

\subsection{Finsler manifolds.}\label{sec:1.2.1} 

First we recall the basic definitions in Finsler geometry. Let $M\coloneqq M^m$ be a  
$m$-dimensional manifold, $m\geq 1$. 
Suppose that $(x^1,\dots,x^m)$ is a local coordinate at some $p\in M$. Then $y_i=\partial x_i$ induces a local coordinate $(x^1,\dots,x^m,y^1,\dots,y^m)$ on $TM$. 
For simplicity, for a function $H\colon TM\to \mathbb R$ we use the notations  $H_{x^i}=\partial_{x^i}H$ and $H_{y^i}=\partial_{y^i}H$.

A smooth \emph{Finsler manifold} $(M,F)$ is a smooth manifold $M$ together with a 
continuous function $F\colon TM\to \mathbb{R}_{\geq 0}$ called the \emph{Finsler metric} (Finsler function) satisfying the following conditions:
\begin{description}
\item[Regularity:] The function $F$ is smooth on $TM\setminus \lbrace 0\rbrace$.
\item[Positive Homogeneity:] For any $(x,y)\in T_xM$ and $\lambda\geq 0$, we have $F(x,\lambda y)=\lambda F(x,y)$.
\item[Strong Convexity:] For $0\neq (x,y)\in T_xM$, the \emph{fundamental tensor} defined by
\begin{align}
\label{e:FT}
[g_{(x,y)}]_{ij}\coloneqq\left( \dfrac{1}{2} F^2 \right)_{y^iy^j}
\end{align}
is strictly positive definite.
\end{description}

 The indicatrix bundle of $(M,F)$ is defined by
\begin{align*}
IM=\lbrace Y\in TM\colon F(Y)=1\rbrace.
\end{align*}
For any $p\in M$, the fibre $I_pM$ of $IM$ is a convex hypersurface  in  $T_pM$ diffeomorphic  to $\mathbb{S}^{m-1}$.\\

If $(M,\mathbf{g})$ is a Riemannian manifold, one can naturally endow it with a Finsler metric by setting 
$F(Y):=\sqrt{\mathbf g(Y,Y)}$, $Y\in TM$. Conversely, a Finsler function corresponds to some Riemannian metric $\mathbf g$ if and only if its fundamental 
tensor $g_{ij}$ defined in \eqref{e:FT} depends only on the $x^i$-variables. 

 The definitions of geodesics and exponential maps can be  naturally generalised to the Finslerian situation.   
A smooth curve $\gamma\colon [a,b]\rightarrow M$ is a {\it geodesic},  if it is a stationary point of the energy functional 

\begin{equation} \label{eq:F2}
E[\gamma]\coloneqq \dfrac{1}{2}\int_a^b F^2(\gamma(t),\dot{\gamma}(t))\, \di t.
\end{equation}
 among all piecewise smooth curves starting at $\gamma(a)$ and ending at $\gamma(b)$.
It is known that for any $p\in M$ and for  any $Y\in T_pM$, there exists  a unique geodesic $\gamma_Y=\gamma_Y(t)$ such that $\gamma(0)=p$ and $\dot{\gamma}(0)=Y$. We define the {\it exponential map}  at $p$ to be
\begin{align}
\exp_p\colon T_pM\ni Y\mapsto \gamma_Y(1)\in M
\end{align}
for all $Y\in T_pM$ such that $\gamma_Y(t)$ is defined for $t\in[0,1]$.
We say $(M,F)$ is {\it forward complete} if for any $p\in M$ the exponential map $\exp_p$ is defined for all $Y\in T_pM$. The manifold 
$(M,F)$
is {\it geodesically complete}, if each geodesic $\gamma$ can be extended to a geodesic defined for all $t\in (-\infty,\infty)$.

For a piecewise smooth curve $\gamma\colon [a,b]\rightarrow M$, its {\it length}  is defined by
\begin{align}
\mathbf{Length}(\gamma)=\int_a^b F(\gamma(t), \dot\gamma(t))\,\di t.
\end{align}
% For $p,q\in M$, let $\mathcal{C}_{pq}$ be the family of all piece-wise smooth curves from $p$ to $q$.
The Finsler function $F$ defines the following {\it asymmetric} and  {\it symmetrized} distances on $M$:
\begin{align}
\notag
d_a(p,q) &\coloneqq  \inf\Big\lbrace \textbf{Length}(\gamma)\colon \gamma \text{ is a piecewise smooth curve from $p$ to $q$}\Big\rbrace, \\
\label{eqn:symmetrized distance}
d(p,q)&\coloneqq \max\lbrace d_a(p,q),d_a(q,p)\rbrace
\end{align}
 By the Hopf--Rinow theorem for Finsler manifolds (see e.g.\ \cite[Section 6.6]{bao2000introduction}), 
if $(M,F)$ is forward complete, the metric space $(M,d)$ is complete. For a forward complete $(M,F)$, every closed ball of $(M,d)$ is compact.
The manifold $M$ can be naturally endowed with the Borel sigma-algebra that makes it a measure space. 

Like in  the  Riemannian case, geodesics of  Finsler metrics  are local distance minimizing (with respect to $d_a$) curves. The formula \eqref{eq:F2}    ensures that they are parametrised proportional to the arc-length parameter.  
Note, as $F$ is in general not \emph{reversible}, i.e.\ $F(x,y)\not\equiv F(x,-y)$, 
the distance function $d_a$ and geodesics are not reversible 
either. 

We will assume below that the flag and $T$-curvatures (the definitions are in e.g. \cite{shen2001lectures}) of our Finsler manifold are uniformly  bounded.   The flag curvature $K$ can be thought as a generalisation of the Riemannian sectional curvature. 
The definition of  $T$-curvature  (see \cite[\S 10.1]{shen2001lectures})  is essentially Finslerian since it vanishes for Riemannian manifolds.

Within the whole paper we assume  the following set of hypotheses.

\medskip
\noindent
\textbf{H}$_{c}$: The manifold $(M,F)$ is connected and  forward complete.

\medskip
\noindent
\textbf{H}$_{b}$: The manifold $(M,F)$ has bounded geometry in the following sense: 

\begin{dfn}
\label{definition:bounded geometry}
We say a Finsler manifold $(M,F)$ has \emph{bounded geometry} if the followings hold:
\begin{enumerate}
\item   \label{condition:uniform elliptic} Uniform ellipticity: There is some constant $C>1$ such that for any $p\in M$ and any non-zero $u,v\in T_pM$, we have 
\begin{align}
\frac{1}{C^2}F^2(v)=\dfrac{1}{C^2}g_v(v,v)\leq  g_u(v,v)\leq C^2g_v(v,v)=C^2F^2(v).
\end{align} 
\item  \label{condition:K-bound} The flag curvature $K$ is bounded uniformly and absolutely by some constant $\lambda>0$, namely $\lVert K\rVert \leq \lambda$.
\item  \label{condition:T-bound} The $T$-curvature is also bounded uniformly and absolutely in the following sense. For any $p\in M$, any $u,v\in T_pM$ with $F(v)=1$, the $T$-curvature satisfies
\begin{align}
\label{e:T}
\lvert T_v(u)\rvert\leq \lambda\lbrace g_v(u,u)-[g_v(u,v)]^2\rbrace
\end{align} 
\end{enumerate}
\end{dfn}

Note that all objects   used in the definition of ``bounded geometry'' are microlocal,  and for an explicitly given Finsler metric it is possible   to check whether it has bounded geometry. Moreover, if $M$ is compact, then every smooth Finsler metric on it has bounded geometry. 

In this paper we use the following notations.

We say a function $f\colon M\rightarrow \mathbb{R}$ vanishes at infinity, if $\forall \varepsilon>0$, there exists some compact set $K_{\varepsilon}\subset M$ such that $\Vert f\Vert\leq \varepsilon$ outside $K_{\varepsilon}$.

We denote the unit discs on $TM$ by
\begin{align*}
D_pM=\lbrace Y\in T_pM \colon F(Y)\leq 1\rbrace.
\end{align*}

Let $d$ be the symmetrized distance defined by \eqref{eqn:symmetrized distance}. We  denote  the open balls by: 
\begin{align*}
B_p(\varepsilon)=\lbrace q\in M\colon d(p,q)<\varepsilon\rbrace ,\quad \varepsilon>0.
\end{align*}

In this paper $\mathcal{B}$ is the space of Borel measurable real valued functions on $M$, $\mathcal{C}_0$ is the space of 
continuous that vanish at infinity,
$\mathcal{C}^\infty$ is the space of smooth functions,
$\mathcal{C}^\infty_K$ is the space of  smooth functions with compact support. Furthermore,
$D([0,\infty),M)$ is the collection of right continuous functions $\gamma\colon [0,\infty)\rightarrow M$ with left limits, and 
$C([0,\infty),M)$ is space of continuous functions $\gamma\colon [0,\infty)\rightarrow M$.

\subsection{Rescaled geodesic random walks.\label{s:RRW}}  
 
As a motivation for  ``rescaling'', 
let us consider the following  example of a geodesic  random walk: The  manifold is  $\mathbb{R}$  with 
the standard flat metric and  $\nu_p$ is defined as follows. At every point $p$, the support of $\nu_p$ 
is two vectors 
$\{1, -1\}\subset T_p\mathbb{R}=\mathbb{R}$ such that the probability of $-1$ is $1/4$, and 
probability of $1$ is $3/4$. That is,  the particle goes 
  the distance $1/\sqrt{N}$
with probability $1/4$  in the negative direction  and with probability $3/4$  in the positive direction.  

 In this case the mean value of the position of the particle in one step is $1/(2 \sqrt{N})$ in the positive direction, so in $N$ steps the mean value is $\sqrt{N}/2$. For $N \to \infty$, most trajectories  ``escape to infinity'', 
 so the limit of the sequence
 of such 
stochastic  processes for $N\to \infty$ does not exist. 

This phenomenon appears in all dimensions 
when the probability distribution $\nu_p$ (on $T_pM$) has a nonzero mean 
(note that almost every  trajectory of the standard flat Brownian motion is not a rectifiable curve). 

Because of this, we introduce below  the {\it rescaled } random walk (we will formalize this definition in \S \ref{sec:form1}). We denote  the mean of $\nu_p$  by $\mu_p$, 
and  modify  the measure $\nu_p$ by shifting  it in $T_pM$   by  
the vector  $-\mu_p + \mu_p/\sqrt{N}$,\ i.e.\ $\tilde \nu_p(y):=\nu_p(y-\mu_p + \mu_p/\sqrt{N})$.  
We will call this operation the {\it rescaling of measure}.

Let us explain this operation.  First we note that the easiest would be to shift the measure by the vector $-\mu_p$.
This will make the mean of the new measure equal to $0$, and the phenomenon demonstrated in the example above does not appear. 
Note that some papers   on random  walks on  Finsler metric, for example \cite{Antonelli3},
assume that   both Finsler metric and the measure $\mu_p$ are centrally-symmetric on every $T_pM$ 
(the so-called reversible situation). We do not want to 
do it since most examples of Finsler metrics appearing in applications are not reversible. 

This  rescaling of the measure  was used in the Riemannian situation by E.\ J\o{}rgensen in \cite{Jorgensen}, and his motivation,
which is also valid in our situation, was  that for $N=1$ the increments of the 
random walk should be distributed according to $\nu_p$.  Indeed, for $N=1$ we have $-\mu_p + \mu_p/\sqrt{N}=0$. 
We also feel that in the case that Finsler geodesic random walk is used to describe a physical model,  
the mean of $\nu_p$ should  somehow come in the definition. Of course, it may come with any other coefficient $\alpha$, 
i.e., $\nu_p$ may be  shifted  by the vector $-\mu_p + \alpha \mu_p/\sqrt{N}$. 
But also in this case (even if $\alpha$ depends  on the position) our results are applicable. 
Indeed, if we modify $\nu_p$ by shifting it by $\beta\mu_p$, then the rescaled measure will be shifted by 
$(1+\beta)\mu_p/\sqrt{N}$.  Thus, all results of our paper  can be applied for any $\alpha$, in particular for $\alpha=0$.

We also expect that such rescaling of the measure is physically-relevant, since microscopic particles can not make too long jumps because of friction and collisions; so even if the probability of the particle to go to the ``right'' is higher than the probability to go 
to the ``left'',  the particle  does not escape  to infinity in short time, contrary to what is suggested by the  random walk described in the beginning of this section.   

\subsection{The main result.} \label{sec:main} 

We will assume that the Finsler manifold $(M, F)$ is connected and forward  complete (Hypothesis \textbf{H}$_{c}$) and has bounded geometry (Hypothesis  \textbf{H}$_b$). In addition, we make the following assumption on the family of measures $\{\nu_p\}$.

\medskip

\noindent
\textbf{H}$_\nu$: We assume that $\nu= \{\nu_p\}$ is a smooth family of probability measures inside   $DM:=\{ Y \in TM  \mid F(Y)\le  1\}$ in $TM$ or on the $F$-indicatrices.

In the first case we require that $\nu$ is a smooth $m$-form on  $DM$ such that  for every $p$,  
   the restriction  $\nu_p:=\nu|_{T_pM}$ 
   is a form on the disc $D_pM:= \{ Y \in T_pM  \mid F(Y)\le  1\}$  inducing a probability measure. 
Similarly,  in the second case
$\nu$ is a smooth $(m-1)$-form on $IM$ such that for each $p\in M$ the restriction $\nu_p:=\nu|_{I_pM}$ is a probability measure.

\medskip

This hypothesis is very natural  from the viewpoint of  Finsler geometry, and covers many   choices that have their natural counterparts in the Riemannian setting; we will give a few examples in \S \ref{sec:average}.

Our main result is the following theorem: 
 
\begin{thm}
\label{thm:convergence main theorem}
Let Hypotheses $\mathbf{H}_{c}$, $\mathbf{H}_b$ and $\mathbf{H}_\nu$ be satisfied.
% 
% Let $(M^m,F)$ be a complete Finsler manifold having bounded geometry. Suppose the prescribed measures $\lbrace\nu_p\rbrace_{p\in M}$ are induced by a smooth $(m-1)$-form $\omega$ on $IM$ such that $\omega\vert_{T_pM}$ is a nonvanishing volume form for each $p\in M$. 
Consider  a family of geodesic random walks starting at $p_0$ constructed from $(M,F)$ and $\lbrace \nu_p\rbrace_{p\in M}$. Then, this  sequence  
has a unique weak limit $\xi$. The process $\xi$ is a diffusion whose generator is a 
non-degenerate elliptic differential operator $A$ with smooth coefficients given by 
\begin{align}
\label{eqn:diffusion generator}
Af(p)=\di f(\mu_p)+\dfrac{1}{2}\int_{T_pM} \dfrac{\di^2}{\di t^2}\biggr\vert_{t=0}f\circ \gamma_{Y-\mu_p}(t)\nu_p (\di Y),\quad f\in \mathcal{C}^{\infty}_K.
\end{align}
Here $\gamma_{Y-\mu_p}$ is the geodesic with initial vector $Y-\mu_p$. In the local coordinates, it has the following form:
\begin{equation}
\begin{aligned}
Af(p)=& f_k\left(\mu_p^k-\dfrac{1}{2}\int_{T_pM}\Gamma^k_{ij}\left(p,y-\mu_p \right)(y^i-\mu_p^i)(y^j-\mu_p^j)\, \nu_p(\di y)\right)\\
& +\dfrac{1}{2} f_{ij}\int_{T_pM}(y^i-\mu_p^i)(y^j-\mu_p^j)\, \nu_p (\di y),
\end{aligned}
\end{equation}
where $\Gamma^k_{ij}$ are the formal Christoffel symbols of the second kind given by
\begin{align*}
\Gamma^k_{ij}(x,y)=\dfrac{1}{2}g^{ks}\left(\dfrac{\partial g_{is}}{\partial x_j}+      \dfrac{\partial g_{js}}{\partial x_i}-\dfrac{\partial g_{ij}}{\partial x_s} \right)(x,y),\quad y\neq 0,
\end{align*}
 $f_k= \partial_{x^k} f$  and $f_{ij} = \partial^2 _{x^ix^j} f$.

Moreover, $\xi$ is stochastically complete.
\end{thm}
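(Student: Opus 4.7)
The plan is to realize the $N$-th rescaled geodesic random walk as a discrete-time Markov chain $(X^{(N)}_k)_{k\geq 0}$ on $M$ with one-step transition operator
\begin{equation*}
T_N f(p) \;=\; \int_{T_pM} f\bigl(\exp_p(Y/\sqrt N)\bigr)\,\tilde\nu_p(\di Y),
\end{equation*}
and to deduce weak convergence of the time-rescaled continuous-time process $\xi^{(N)}_t := X^{(N)}_{\lfloor Nt\rfloor}$ from a standard diffusion-approximation theorem for sequences of Markov chains, in the spirit of Stroock--Varadhan and Ethier--Kurtz. Concretely I will verify (a) $N(T_N-I)f\to Af$ uniformly on $M$ for every $f\in\mathcal{C}^\infty_K$, with $A$ the operator in \eqref{eqn:diffusion generator}; (b) the martingale problem for $(A,\mathcal{C}^\infty_K)$ is well-posed on $C([0,\infty),M)$ and its solution does not explode; (c) the family $\{\xi^{(N)}\}$ is tight in $D([0,\infty),M)$. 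Hypothesis $\mathbf{H}_\nu$ will supply the smoothness of the coefficients of $A$, while $\mathbf{H}_c$ and $\mathbf{H}_b$ will be essential to make (a) hold \emph{uniformly in $p\in M$} and to prevent explosion in (b)--(c).

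The key computation for (a) is a Taylor expansion of $t\mapsto f(\gamma_Y(t))$ to third order at $t=0$: the geodesic equation $\ddot\gamma^k=-\Gamma^k_{ij}(\gamma,\dot\gamma)\dot\gamma^i\dot\gamma^j$ gives, in a local chart,
\begin{equation*}
f\bigl(\exp_p(Y/\sqrt N)\bigr) - f(p) \;=\; \tfrac{1}{\sqrt N}\,f_k Y^k + \tfrac{1}{2N}\bigl[f_{ij}Y^iY^j-f_k\,\Gamma^k_{ij}(p,Y)Y^iY^j\bigr] + R_N(p,Y),
\end{equation*}
with a remainder of order $N^{-3/2}|Y|^3$. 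Substituting $Y=Z-\mu_p+\mu_p/\sqrt N$ with $Z\sim\nu_p$, the $\tilde\nu_p$-average of the first-order term contributes $\di f(\mu_p)/N$, which produces the drift in $A$ after multiplication by $N$; the quadratic average reproduces, in the limit $N\to\infty$, the second-order integral of \eqref{eqn:diffusion generator} (because $Y\to Z-\mu_p$). The main obstacle is to bound $N\cdot R_N$ \emph{uniformly in $p\in M$}, since the third-order Taylor term involves first derivatives of the Christoffel symbols along $\gamma_Y$, which are a priori unbounded on a non-compact Finsler manifold. This is where the Finsler-geometric content of $\mathbf{H}_b$ enters: uniform ellipticity (\ref{condition:uniform elliptic}) together with the flag- and $T$-curvature bounds (\ref{condition:K-bound})--(\ref{condition:T-bound}) control the geometry of the geodesic flow, and the Shen comparison estimates of \cite[\S 15]{shen2001lectures} (the input to Lemma~\ref{lemma:generator bounds} announced in the introduction) yield the required uniform bound, giving $\|N(T_N-I)f - Af\|_\infty\to 0$.

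Reading off the local form of $A$, its second-order coefficient is the covariance $a^{ij}(p)=\int(y-\mu_p)^i(y-\mu_p)^j\,\nu_p(\di y)$, smooth in $p$ by $\mathbf{H}_\nu$ and positive definite (immediate when $\nu_p$ is supported on the disc $D_pM$, and following from strong convexity of $I_pM$ spanning $T_pM$ when $\nu_p$ lives on the indicatrix). Bounded geometry upgrades this to \emph{uniform} ellipticity with smooth coefficients, so the martingale problem for $A$ is locally well-posed by the classical Stroock--Varadhan theory on manifolds. Tightness of $\{\xi^{(N)}\}$ follows from Aldous' criterion together with the uniform moment bound $\mathbb{E}\,d(\xi^{(N)}_{\tau+h},\xi^{(N)}_\tau)^2 \le C h$, itself a consequence of the deterministic one-step bound $d(X^{(N)}_{k+1},X^{(N)}_k) \le C/\sqrt N$ furnished by $\mathbf{H}_b$. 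Finally, stochastic completeness, which also delivers the uniqueness of the non-exploding martingale problem solution and hence the uniqueness of the limit $\xi$, is obtained by exhibiting a proper Lyapunov function $\rho\colon M\to[0,\infty)$ with $A\rho\le c(1+\rho)$ outside a compact set: Shen's comparison gives uniform bounds on $A$ applied to powers of the symmetrized distance $d(\cdot,p_0)$, so $\rho(q)=\log(1+d(q,p_0)^2)$ works, and Hasminskii's non-explosion criterion (see e.g.\ \cite[\S 4.2]{Hsu}) concludes the argument.
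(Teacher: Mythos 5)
Your overall architecture (uniform generator convergence on $\mathcal{C}^\infty_K$, tightness, well-posed martingale problem) matches the paper's, and your treatment of the drift term and of the local form of $A$ is correct. However, the proposal has a genuine gap exactly at the point where the paper's main technical work lies: the tightness estimate. The claimed bound $\mathbf{E}\,d(\xi^{(N)}_{\tau+h},\xi^{(N)}_\tau)^2\le Ch$ does \emph{not} follow from the deterministic one-step bound $d(X^{(N)}_{k+1},X^{(N)}_k)\le C/\sqrt N$: over a time window $h$ the walk takes of order $Nh$ steps, so the triangle inequality only yields $d\le C\sqrt N\,h$, and a crude second-moment computation gives $\mathbf{E}\,d^2\lesssim N h^2+h$, which blows up as $N\to\infty$. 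To get an estimate uniform in $N$ one must exploit the cancellation coming from the (re-centred) increments, i.e.\ apply the pre-generator $A_N$ to functions of the distance from the starting point and control their \emph{second} derivatives uniformly over all base points $p\in M$. This is the content of Lemma~\ref{lemma:generator bounds}, and it is nontrivial for two reasons you do not address: (i) the distance function $d_a(p,\cdot)$ need not be smooth on any ball of uniform radius around $p$, because under $\mathbf{H}_b$ only the \emph{conjugate} radius is bounded below (Proposition~\ref{prop:Morse}); $\operatorname{inj}_M$ may be zero. The paper circumvents this by lifting the walk to $(T_pM,F_p)$ via $\exp_p$, where the injectivity radius at $0$ is uniformly bounded below, and then comparing exit times of the lifted and original walks (Lemma~\ref{lemma:exit time comparison}). (ii) The uniform Hessian bound for $d^p_a(0,\cdot)$ requires Shen's comparison theorem, which uses \emph{both} the flag-curvature and the $T$-curvature bounds. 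Without some version of this, neither Aldous' criterion nor compact containment is established.

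Two further points. First, you place the need for Shen's comparison in the wrong step: for $f\in\mathcal{C}^\infty_K$ the remainder bound $N|R_N|\to 0$ is comparatively easy, because the steps have length at most $(C+1)/\sqrt N$, so only points in a fixed compact neighbourhood of $\operatorname{supp}f$ contribute and the bound $|\tfrac{\di^k}{\di t^k}f\circ\gamma_Y|\le cF^k(Y)$ follows from compactness and uniform ellipticity alone (Lemma~\ref{lemma:norm}); no curvature hypothesis is used there. The curvature hypotheses are needed precisely for the exit-time estimates, where the test functions $f_p^\delta$ range over all $p\in M$ with no common compact support. Second, your non-explosion argument via the Lyapunov function $\rho=\log(1+d(\cdot,p_0)^2)$ runs into the same cut-locus problem: $\rho$ is not smooth where $d(\cdot,p_0)$ fails to be, so $A\rho$ is not defined pointwise and ``uniform bounds on $A$ applied to powers of the distance'' cannot be taken for granted. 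The paper avoids any global Lyapunov function by iterating the uniform local exit-time bound to obtain compact containment directly (Lemma~\ref{l:cc}), which simultaneously yields stochastic completeness. If you want to keep the Khasminskii route you would need to regularize $\rho$ past the cut locus (e.g.\ a Calabi-type argument), which is an additional nontrivial step.
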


\subsubsection{Limit diffusion as a Riemannian Brownian motion with drift.\label{sec:1.4}}   

Recall that  {\it the Riemannian Brownian 
motion} is   a  diffusion process   which is 
 the limit of geodesic random walks with identically distributed steps.  Here \emph{identically} distributed should be understood as follows:   the probability measure $\nu_p$ is invariant with respect to  the parallel transport along any curve and is invariant with respect to the standard action of $SO(\mathbf{g})$ on $T_pM$. Actually, for a generic metric  invariance with respect to the parallel transport implies $SO(\mathbf{g})$-invariance.

It is known that   
the  generator  of a  Riemannian Brownian   is proportional to the Beltrami--Laplace operator of the metric, so its symbol 
is proportional with a constant coefficient to  the inverse of the Riemannian metric. 

By Theorem \ref{thm:convergence main theorem},  in the Finslerian  case  the generator $A$ of  the limit process of the geodesic random walk is a second order non-degenerate elliptic differential operator on $M$. 
Hence the symbol $\sigma(A)$ of $A$ is dual to a Riemannian 
metric on $M$ which we denote $\mathbf{g}_A$. Then the Beltrami--Laplace operator $\Delta^A$ of $\mathbf{g}_A$ and 
$A$ have the same symbol. Hence $A-\Delta^A$ is just a vector field on $M$. We call this vector field the {\it drift} of $A$. In the Riemannian case the drift is always zero.

In particular,  though Finsler metrics are much more complicated than Riemannian metrics, one almost 
does not see the difference on the level of 
 diffusion processes (only first order terms of generators may be different).  
This  should  be the  reason why  Finslerian  effects related to 
diffusion   were not observed experimentally in   physical or natural science systems, even  in those where  the free motion of particles   
corresponds  to  geodesics of a certain Finsler metric.   See e.g.\ \cite{Florack} where in a  highly anisotropic situation (diffusion weighted magnetic resonance imaging of brain), the measurement returned a Finsler metric which is very close to a Riemannian metric. 

This observation may provide additional mathematical tools for natural science and physics. Indeed, in most cases   the probability distributions $\nu_p$ can be ``read'' from the description of the model 
(in fact in many cases they are generated by the volume form of the standard flat metric). 
Empirical observations of diffusions may provide tools for testing mathematical models of the system in 
question or determining their parameters.

\subsubsection{Canonical constructions of Riemannian metrics.  } 

\label{sec:average}

In the Riemannian situation, there is essentially only one canonical (i.e., coordinate-invariant) 
construction of a probability measure on $T_pM$. Indeed, coordinate-invariance of the construction implies 
that the metric is invariant under the group $SO(\mathbf{g})$, which implies that in the orthogonal coordinates 
$(y^1,\dots,y^m)$  on $T_pM$ it is given by $\phi((y^1)^2+\cdots +(y^m)^2 )\, \di y^1\wedge \cdots \wedge \di y^m$. 
The function $\phi$ is the same for all points $p$, has the  property that it is nonnegative and that the integral 
$\int_{\mathbb{R}^n} \phi((y^1)^2+\cdots +(y^m)^2 )\, \di y^1\wedge  \cdots  \wedge \di y^m =1$.

In the Finslerian situations there are many natural non-equivalent constructions of a measure on $T_pM$. Let us recall the following three.

\begin{description} 
\item[Measure coming from the Lebesgue measure:]
For any $p\in M$, let $\omega'_p=\di y^1\wedge\cdots\wedge\di y^m$ be a Lebesgue measure on $T_pM$. It is known that it is unique up to a positive coefficient. 
We restrict it to the ball $D_pM$ (that is, the measure of an open set  $U\subset T_pM$ it the Lebesgue-measure  of the intersection $D_pM \cap U$), and normalize it  such that it becomes a probability measure.

\item[Measure coming from the fundamental tensor:]
For any $p\in M$, the fundamental tensor $g_{ij}$ defines a Riemannian metric on the compact manifold $I_pM$. 
Normalizing the volume on $I_pM$ induced by $g_{ij}$ we obtain probability measure 
\begin{align}
\nu_p\coloneqq\dfrac{\mathrm{vol}_g}{\mathrm{vol}_g(I_pM)}.
\end{align}
This probability measure is close to the one  used  in \cite[\S A2]{Antonelli3}.

\item[Measure coming from the Hilbert form:]
Denote $\mathbb{P}^+(M)$ the positive projectivized tangent bundle. The Hilbert 1-form $\hat{\omega}=F_{y^i}\,\di x^i$ defined on 
$TM\setminus \lbrace 0\rbrace$ is actually a pull back of some 1-form $\omega$ on $\mathbb{P}^+(M)$ by the standard projection.
% $\pi\colon TM\setminus \lbrace 0\rbrace \rightarrow \mathbb{P}^+(M) $. 
It is well known that $\omega\wedge(\di \omega)^{m-1}$ defines a volume form on $\mathbb{P}^+(M)\simeq IM$. Let 
$\mathrm{i}_p\colon I_pM\rightarrow IM$ be the standard inclusion
and $\pi\colon IM\rightarrow M$ be the canonical projection. It is known (see e.g.  \cite{barthelme2013natural}) that  there exists a $(m-1)$-form $\alpha^F$ on $IM$ and a volume form $\omega^F$ on $M$ such that 
$\alpha^F\vert_{I_pM}$ is a unique volume form on $I_pM$ for each $p\in M$ with
\ba
&\mathrm{vol}_{\alpha^F}(I_pM)=1,\\
&\alpha^F\wedge \pi^{*}\omega^F=\omega \wedge(\di \omega)^{m-1}.
\ea
Hence we can take $\nu_p\coloneqq \mathrm{vol}_{\alpha^F}$ on $I_pM$.
\end{description}

Each of these measures satisfies the Hypothesis $\mathbf{H}_\nu$,  and is coordinate-independently constructed from  $F$. 
In the case the Finsler metric is reversible, the dual of the symbol of the generator 
corresponding to the first measure gives the {\it  Binet--Legendre}
metric (see e.g.  \cite{Centore, MT2012}). The second choice of the measure  gives  the {\it averaged metric }  used in \cite{berwald,MRTZ} (a small modification of the construction  leads to the metric from \cite{Szaba}), and the third choice of measure generates the {\it Finsler Laplacian}   from  \cite{barthelme2013natural}. Note that 
  the Binet--Legendre metric, the averaged metric, and the Finsler Laplacian from  \cite{barthelme2013natural} appear to be   effective tools for solving different problems in Finsler geometry; we expect that other natural choices of the measure 
  $\nu_p$ may  also be useful in Finsler geometry.

\subsection{Example: Limit diffusion for a Katok Finsler metric.} \label{sec:Katok}
Let $(\mathbb{S}^2,\mathbf{g})$ be the unit sphere endowed with the standard Riemannian metric.     Katok metric is constructed as follows. Let $X$ be the vector field of rotation around the axis connecting the north and south poles of the  sphere such that $\mathbf{g}(X,X)<1$. In the 
the following spherical coordinate on $\mathbb{S}^2$,
\begin{align}
\label{eqn:spherical coordinate}
(\psi,\theta)\mapsto (\cos(\psi)\cos(\theta),\sin(\psi)\cos(\theta),\sin(\theta)).
\end{align}
We have
\begin{align*}
X=r\partial_{\psi},\ \Vert r\Vert<1.
\end{align*}
Now for any $p\in M$, the indicatrix $I_pM$ of the constructed Finsler function $F$ is obtained by shifting   the unit sphere $S_p\mathbb{S}^2$  of $\mathbf{g}$ (which is the indicatrix with respect to $\mathbf{g}$)  by $X$. That is,
\begin{align*}
I_pM\coloneqq \lbrace v+X_p\colon v\in T_p\mathbb{S}^2,\ \mathbf{g}(v,v)=1\rbrace
\end{align*}
This yields a well-defined Finsler metric as $\mathbf{g}(X,X)<1$. This  family of metrics depending on the parameter $r$ was constructed by A.~Katok  in \cite{Katok}. It is widely used in Finsler geometry and in the  theory of dynamical system as source of examples and counterexamples. It has constant flag curvature by \cite{bao2004zermelo,Foulon,Sh-1}, and by \cite{BFIMZ}, any metric of constant flag curvature on the 2-sphere has geodesic flow conjugate to that of a Katok metric.

As the measure $\nu_p$ we consider the Lebesgue measure as described in \S \ref{sec:average}; let us calculate the generator of the corresponding diffusion process $\xi$.  
  
By Theorem~\ref{thm:convergence main theorem}, the diffusion process $\xi$ generated by $\lbrace \nu_p\rbrace_{p\in M}$ has generator 
$A$ such that
\begin{equation}
\label{e:KatokA}
\begin{aligned}
Af(p)=& \di f(X)(p)+\dfrac{1}{2} \biggr\lbrace f_{ij}\int_{D_pM}(Y^i-X^i)(Y^j-X^j)\, \nu_p(\di Y) \\
& - f_k\int_{D_pM}\Gamma^k_{ij}\left(p,Y-X \right)(Y^i-X^i)(Y^j-X^j)\, \nu_p(\di Y) \biggr\rbrace\\
%=& \di f(X)(p)+\dfrac{1}{4\pi}\biggr \lbrace \int_{S_p\mathbb{S}^2}Y^iY^j m_p(\di Y) -f_k %\int_{S_p\mathbb{S}^2}\Gamma(p,Y)Y^iY^j\, m_p(\di Y) \biggr \rbrace.
\end{aligned}
\end{equation}
Here $\Gamma^k_{ij}$ are the formal Christoffel symbols of the second kind of $(M,F)$ and $f\in \mathcal C^{\infty}$.

As $\nu_p$ is induced by a Lebesgue measure on $T_pM\simeq\mathbb{R}^m$, we also denote this Lebesgue measure by $\nu_p$ for simplicity. For any $p\in M$, the set 
\begin{align*}
\hat{D}_PM\coloneqq \lbrace Y\in T_pM\ \vert\ Y+X(p)\in D_pM\rbrace
\end{align*}
is just the closed unit ball on $T_p\mathbb{S}^2$ with respect to $\mathbf{g}$.
Since $\nu_p$ is translation invariant, Equation \eqref{e:KatokA} becomes
\begin{align}
\label{e:KatokB}
Af(p)=& \di f(X)(p)+\dfrac{1}{2} \biggr\lbrace f_{ij}\int_{\hat{D}_pM} Y^i Y^j \nu_p(\di Y) 
 - f_k\int_{\hat{D}_pM}\Gamma^k_{ij}\left(p,Y \right)Y^iY^j\, \nu_p(\di Y) \biggr\rbrace
\end{align}
Note the integrand in the equation above is second order homogeneous in $Y$. By Fubini theorem, for any $p\in M$ there is a finite measure $\eta_p$ on $S_p\mathbb{S}^2$ such that for any integrable second order homogeneous 
function $h$ on $T_pM$, we have
\begin{align}
\int_{\hat{D}_pM} h(Y)\nu_p (\di Y)=\int_{S_p\mathbb{S}^2} h(Y) \eta_p (\di Y)
\end{align}
Because $\nu_p$ is invariant under any orthogonal transformation on $T_p\mathbb{S}^2$ with respect to $\mathbf{g}$, it is clear $\eta_p$ is a multiple of the canonical angular measure $m_p$ on $S_p\mathbb{S}^2$ with respect to $\mathbf{g}$. A straight forward computation shows $\eta_p=\dfrac{1}{4\pi}m_p$. 
Hence from \eqref{e:KatokB}, we get
\begin{align}
Af(p)=\di f(X)(p)+\dfrac{1}{8\pi} \biggr\lbrace f_{ij}\int_{S_p\mathbb{S}^2} Y^i Y^j m_p(\di Y) 
 - f_k\int_{S_p\mathbb{S}^2}\Gamma^k_{ij}\left(p,Y \right)Y^iY^j\, m_p(\di Y) \biggr\rbrace
\end{align}
Let $\Delta$ be the Beltrami--Laplace operator of $\mathbf{g}$, and let $\hat{\Gamma}^k_{ij}$ be the Christoffel symbols of $\mathbf{g}$. A straightforward computation yields
\begin{equation}
\begin{aligned}
\dfrac{1}{8}\Delta f(p)=\dfrac{1}{8\pi}\biggr\lbrace f_{ij}\int_{S_p\mathbb{S}^2}Y^i Y^j \, m_p(\di Y)
- f_k\int_{S_p\mathbb{S}^2}\hat{\Gamma}^k_{ij}(p)  Y^i Y^j\, m_p(\di Y)\biggr\rbrace.
\end{aligned}
\end{equation}
This implies $\dfrac{1}{8}\Delta$ and $A$ have the same symbol. To compute the drift of $A$, we assume without loss of generality that $0\leq r<1$. First we have
\begin{equation}
\label{eqn:drift in coordiante}
\begin{aligned}
\left(A-\dfrac{1}{8}\Delta\right)f(p)&= \di f(X)(p)
+\dfrac{1}{8\pi}f_k \int_{S_p\mathbb{S}^2}\hat{\Gamma}^k_{ij}(p)  Y^i Y^j\, m_p(\di Y)\\
& -\dfrac{1}{8\pi} f_k \int_{S_p\mathbb{S}^2}\Gamma^k_{ij}\left(p,Y \right) Y^i Y^j \,m_p(\di Y). 
\end{aligned}
\end{equation}
Let $\Phi_t$ be the flow generated by $X$, we know from \cite[Theorem 1]{Foulon} that if $\gamma(t)$ is a geodesic of $(\mathbb{S}^2,\mathbf{g})$ with $\sqrt{\mathbf{g}(\dot{\gamma},\dot{\gamma})}=c$, then $\hat{\gamma}(t)=\Phi_{ct}\circ\gamma(t)$ is a geodesic of $F$ with initial vector $\hat{\gamma}'(0)=\dot{\gamma}(0)+cX(\gamma(0))$. But in the spherical coordinate given \eqref{eqn:spherical coordinate}, the flow $\Phi_t$ simply has the form
\begin{align*}
\Phi_t(\psi,\theta)=(\psi+rt,\theta).
\end{align*}
Then in this coordinate, we have
\begin{align*}
\dfrac{\di^2}{\di t^2}\biggr\vert_{t=0} \hat{\gamma}(t)= \dfrac{\di^2}{\di t^2}\biggr\vert_{t=0} \gamma(t).
\end{align*} 
By the geodesic equation, this is equivalent to 
\begin{align*}
\Gamma^k_{ij}(p,\hat{\gamma}'(0))(\hat{\gamma}'(0))^i (\hat{\gamma}'(0))^j=\hat{\Gamma}^k_{ij}(p)(\gamma'(0))^i(\gamma'(0))^j.
\end{align*}
Using this and \eqref{eqn:drift in coordiante}, a straight forward computation gives
\begin{align}
\Big(A-\frac{1}{8}\Delta\Big)=X+\dfrac{1}{4}r^2\cos(\theta)\sin(\theta)\cdot \dfrac{r^2\cos^2(\theta)-2}{\left( 1-r^2\cos^2(\theta)\right)^2}\cdot \partial_{\theta}.
\end{align}
This is the drift of the generator $A$. 
\begin{figure}
\begin{center}
 \includegraphics[width=0.45\textwidth]{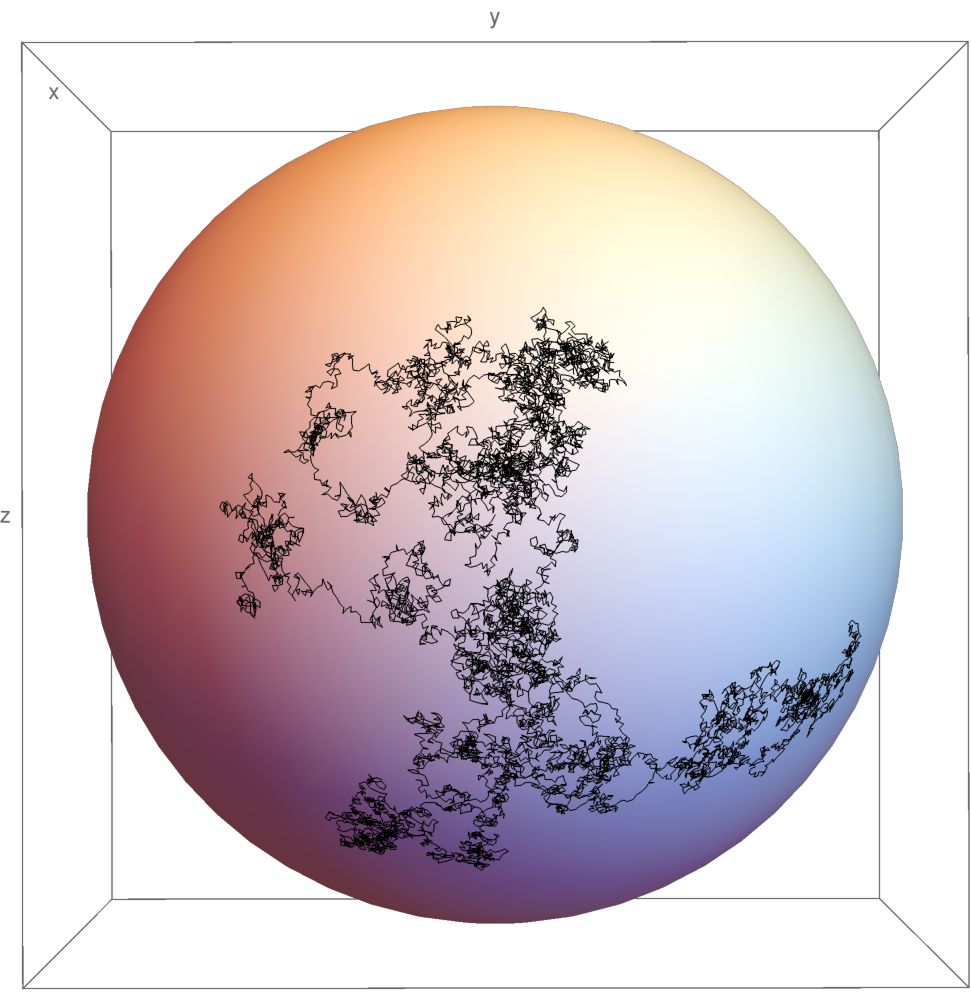} % first figure itself
 \includegraphics[width=0.45\textwidth]{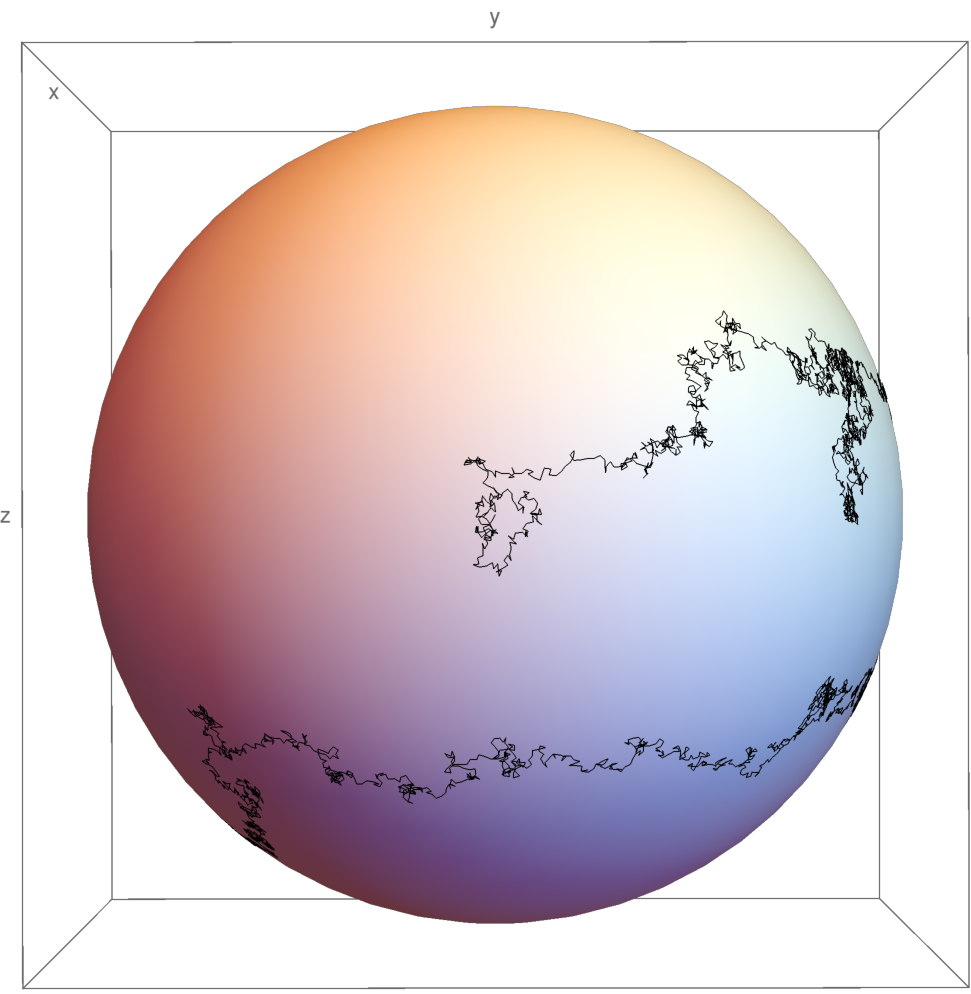} % second figure
\end{center}
\caption{ (l.) A sample path of the Brownian motion on the standard sphere. (r.) A sample path of the 
diffusion with the generator $A$ given by \eqref{e:KatokA} with $r=1/2$.
\label{Fig:2}}
\end{figure}

On Figure \ref{Fig:2} one clearly sees the difference in the behaviours of the Brownian motion of the 
initial round metric on $\mathbb{S}^2$ and of the diffusion  process corresponding to 
the Katok metric with $r=1/2$ due to the drift given in \eqref{eqn:drift in coordiante}. 
Of course the pictures are just the pictures of the corresponding geodesic random walks with a sufficiently large $N$. 
Note that the same random seed was used in both pictures.

\section{Preliminaries.}
In this section, we give a short review of the tools in Finsler geometry  which will be used in our proof in later sections and formalise definitions of random 
geodesic walks which will allow us to apply the machinery from the theory of stochastic processes.

\subsection{Finsler geodesics and properties of bounded geometry.}\label{section:Finsler preilminary}

It is well-known that  stationary points  of the energy functional  \eqref{eq:F2} are solutions of the  Euler-Lagrange  equation  which in our situation is equivalent to the following      system of ODEs:  
\begin{align}
&\frac{\di x^i}{\di t}=y^i,\\
&\frac{\di y^k}{\di t}+\Gamma^k_{ij}(x,y)y^iy^j=0. 
\end{align}
Here $\Gamma^k_{ij}$ are the formal Christoffel symbols of the 2nd kind:
\begin{align} \label{gamma} 
\Gamma^k_{ij}(x,y)=\dfrac{1}{2} g^{ks}\left(\dfrac{\partial g_{is}}{\partial x_j}+\dfrac{\partial g_{js}}{\partial x_i}-\dfrac{\partial g_{ij}}{\partial x_s}\right)(x,y),\quad y\neq 0
\end{align}
It is immediate from \eqref{gamma} that for any $\lambda>0$ and $y\neq 0$, we have 
\begin{align}
\Gamma^k_{ij}(x,y)=\Gamma^k_{ij}(x,\lambda y).
\end{align}
Denote the class of real-valued $k$-times continuously differentiable real-valued functions on $M$ with compact support by $\mathcal{C}^k_K$. Suppose $f\in \mathcal{C}^{k_0}_K$ with compact support $K_f$. For any $Y\in TM$, define $f_Y(t)=f\circ \gamma_Y(t)$, where $\gamma_Y(t)$ is the geodesic with initial vector $Y$ as before. 
\begin{lem}
\label{lemma:norm}
Suppose that $f\in \mathcal{C}^{k_0}_K$. There exists some constant $c$ such that 
\ba
\biggr\lvert\left(\dfrac{\di ^k}{\di t^k}f_Y\right)(t)\biggr\rvert\leq c F^k(Y),\quad \forall k\leq k_0,
\ea
wherever it is well defined.
\end{lem}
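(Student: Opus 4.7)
The plan is to reduce the statement to the case $F(Y)=1$ via the zero-degree homogeneity of the Christoffel symbols, and then to exploit the compact support of $f$ together with the conservation of the Finsler norm along geodesics.

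First I would use the scaling property of Finsler geodesics. Since $\Gamma^k_{ij}(x,y)$ is homogeneous of degree zero in $y$, the geodesic equation is equivariant under $Y\mapsto \lambda Y$ for $\lambda>0$, which yields $\gamma_{\lambda Y}(t)=\gamma_Y(\lambda t)$. Consequently
\begin{equation*}
\frac{\di^k}{\di t^k}f_{\lambda Y}(t)=\lambda^k\Big(\frac{\di^k}{\di s^k}f_Y\Big)\Big|_{s=\lambda t}.
\end{equation*}
Taking $\lambda=F(Y)$ (the case $Y=0$ is trivial), it suffices to establish, for each $k\leq k_0$, a uniform bound of the form
\begin{equation*}
\sup_{\tilde Y:\,F(\tilde Y)=1}\sup_{s\in\R}\Big|\tfrac{\di^k}{\di s^k}f_{\tilde Y}(s)\Big|\leq c_k.
\end{equation*}

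Next I would express the higher derivatives in local coordinates via the chain rule and the geodesic ODE $\ddot\gamma^k=-\Gamma^k_{ij}(\gamma,\dot\gamma)\dot\gamma^i\dot\gamma^j$. An easy induction on $k$ shows that $\frac{\di^k}{\di s^k}f_{\tilde Y}(s)$ is a universal polynomial in: the partial derivatives of $f$ up to order $k$ evaluated at $\gamma_{\tilde Y}(s)$; the partial derivatives (in $x$ and $y$) of $\Gamma^p_{ij}$ up to order $k-2$ evaluated at $(\gamma_{\tilde Y}(s),\dot\gamma_{\tilde Y}(s))$; and the components of $\dot\gamma_{\tilde Y}(s)$.

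The proof is finished by a compactness argument. When $\gamma_{\tilde Y}(s)\notin K_f$ all derivatives of $f$ vanish at $\gamma_{\tilde Y}(s)$, so only the case $\gamma_{\tilde Y}(s)\in K_f$ is relevant. Along a Finsler geodesic the Finsler norm of the velocity is conserved, so $F(\dot\gamma_{\tilde Y}(s))\equiv F(\tilde Y)=1$. Cover $K_f$ by finitely many coordinate charts; in each chart the set $\{(x,y):x\in K_f\cap\text{chart},\,F(x,y)=1\}$ is compact, and hence the coordinate components of $\dot\gamma_{\tilde Y}(s)$ are bounded, while the Christoffel symbols and their partial derivatives of any fixed finite order are continuous, hence bounded, on a neighbourhood of this set. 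Since $f\in\mathcal{C}^{k_0}_K$, its derivatives up to order $k_0$ are bounded on $K_f$. Combining these bounds with the polynomial expression from the previous step yields the constant $c_k$ for each $k\leq k_0$, and taking the maximum over $k\leq k_0$ produces a single $c$ as claimed.

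The only mildly delicate point, and the main potential obstacle, is keeping the estimates coordinate-consistent: the polynomial expression for $\frac{\di^k}{\di s^k}f_{\tilde Y}(s)$ is chart dependent, but since $K_f$ admits a finite cover by coordinate charts one simply takes the maximum of the bounds obtained in each chart. No genuinely hard analytic estimate enters; the bounded-geometry hypothesis $\mathbf{H}_b$ is not required here, only compactness of $\mathrm{supp}\,f$.
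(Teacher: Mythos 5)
Your proposal is correct and follows essentially the same route as the paper: reduce to $F(Y)=1$ by positive homogeneity of geodesics, use conservation of $F(\dot\gamma)$ along geodesics to confine the relevant data to the (compact) unit sphere bundle over $\mathrm{supp}\,f$, and conclude by continuity/compactness. The only cosmetic difference is that the paper packages your coordinate polynomial invariantly as the iterated Lie derivative $\mathcal{L}^k_{\tilde X}(\pi^* f)$ along the geodesic spray, which spares the chartwise bookkeeping.
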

\begin{proof}
First we show there is some constant $c$ such that $\biggr\lvert\left(\dfrac{\di ^k}{\di t^k}f_Y\right)(0)\biggr\rvert\leq c F^k(Y)$. Let $\tilde{X}$ be the geodesic spray on $TM$, and denote $\pi\colon TM\rightarrow M$ the canonical projection. The function $\pi^*f$ is $\mathcal{C}^{k_0}$, so $(\mathcal{L})^k_{\tilde{X}}(\pi^*f)$ is continuous on $TM\setminus \lbrace 0\rbrace$ for any $k\leq k_0$. Hence for some constant $c$, we have 
$$\lvert(\mathcal{L})^k_{\tilde{X}}(\pi^*f)(\hat{p})\rvert\leq c ,\ \forall\hat{p}\in IK_f,\ \forall k\leq k_0. $$
For any $p\in M$ and $Y\in T_pM$, we have $$\left(\dfrac{\di ^k}{\di t^k}f_Y\right)(0)=(\mathcal{L})^k_{\tilde{X}}(\pi^*f)(p,Y).$$
In addition, for $Y\neq 0$, let $Y'=\dfrac{Y}{F(Y)}$. Using $$f_Y(t)=f\circ \gamma_{F(Y)Y'}(t)=f_{Y'}(F(Y)t),$$
we get 
$$\biggr\lvert\left(\dfrac{\di ^k}{\di t^k}f_Y\right)(0)\biggr\rvert=\biggr\lvert\left(\dfrac{\di ^k}{\di t^k}f_{Y'}\right)(0)\biggr\rvert F^k(Y).$$
Then for $p\in K_f$ and $Y\neq 0$, we have
$$\biggr\lvert\left(\dfrac{\di^k}{\di t^k}f_Y\right)(0)\biggr\rvert=(\mathcal{L})^k_{\tilde{X}}(\pi^*f)(p,Y')\cdot F^k(Y)\leq c F^k(Y).$$
For $p\notin K_f$, we have $f$ vanishes identically on some neighbourhood of $p$. Then the function $t\mapsto f_Y(t)$ is constant near $t=0$. For $Y=0$, then function $f_Y(t)$ is also constant. It follows that $\biggr\lvert\left(\dfrac{\di ^k}{\di t^k}f_Y\right)(0)\biggr\rvert\leq c F^k(Y)$.

Next, given any geodesic $\gamma_Y$, let $Y(t)$ be its velocity field. We have  $$F(Y(t))=F(Y(0))=F(Y),$$
$$
\biggr\lvert\left(\dfrac{\di ^k}{\di t^k}f_Y\right)(t)\biggr\rvert=\biggr\lvert \left( \dfrac{\di ^k}{\di t^k}f_{Y(t)}\right)(0)\biggr\rvert\leq cF^k(Y(t))=cF^k(Y).
$$
This completes the proof.
\end{proof}

The injective radius at $p$ is defined by
\begin{align*}
\mathrm{inj}_M(p)& \coloneqq \inf\lbrace r>0\colon \exp_p\vert_{D_p(r)}\ \mathrm{is\ injective} \rbrace\\
\mathrm{inj}_M & \coloneqq \inf\lbrace \mathrm{inj}_M(p)\colon p\in M \rbrace
\end{align*}
The conjugate radius is defined similarly by
\begin{align*}
\mathrm{con}_M(p)& \coloneqq \inf\lbrace r>0\colon \exp_p\vert(D_p(r))\ \mathrm{is\ an\ immersion}\rbrace\\
\mathrm{con}_M & \coloneqq \inf\lbrace \mathrm{con}_M(p)\colon p\in M\rbrace.
\end{align*}
Clearly we always have $\mathrm{inj}_M(p)\leq \mathrm{con}_M(p)$ for any $p\in M$. The conjugate radius and flag curvature are related by the following well known result, \cite[Section 9.5]{bao2000introduction}.
\begin{prp}[Morse--Schoenberg]
\label{prop:Morse}
Suppose $(M,F)$ is a Finsler manifold such that its flag curvature $\Vert K\Vert \leq \lambda$. Then the conjugate radius is bounded from below by $\mathrm{con}_M\geq \dfrac{\pi}{\sqrt{\lambda}}$, hence strictly positive.
\end{prp}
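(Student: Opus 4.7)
The plan is to adapt the classical Riemannian Morse--Schoenberg argument via a Jacobi-field/Sturm comparison carried out in the Finsler covariant calculus along a geodesic. I would argue by contradiction: assume some point $p=\gamma(0)$ admits a conjugate point $q=\gamma(L)$ along a unit-speed geodesic $\gamma\colon[0,L]\to M$ with $L<\pi/\sqrt{\lambda}$, so that there exists a nontrivial Jacobi field $J$ along $\gamma$ with $J(0)=J(L)=0$. The aim is to contradict this using only the bound $\lVert K\rVert\le\lambda$ from Definition~\ref{definition:bounded geometry}(\ref{condition:K-bound}).

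First I would set up the Finsler Jacobi formalism along $\gamma$ following \cite[\S 7]{shen2001lectures}. Writing $T=\dot\gamma$ and using the Chern connection with reference vector $T$, one obtains a covariant derivative $D_T$ on vector fields along $\gamma$ such that $D_TT=0$, the reference inner product $g_T$ is parallel along $\gamma$ (because the Cartan tensor annihilates $T$), and $J$ satisfies
\begin{equation*}
D_T D_T J + R_T(J) = 0,
\end{equation*}
where $R_T$ is the flag-curvature endomorphism. The $g_T$-parallel component of $J$ along $T$ is affine in $t$ and therefore vanishes identically under the boundary conditions; hence one may assume $J\perp_{g_T} T$ throughout $[0,L]$ and $D_TJ(0)\neq 0$.

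Next I would perform the Sturm comparison. On the open set where $J\neq 0$ set $u(t):=\sqrt{g_T(J,J)}$. Using the $g_T$-parallelism of $D_T$ along $\gamma$, one computes
\begin{equation*}
u u'' + (u')^2 \;=\; g_T(D_TJ,D_TJ) - g_T(R_T(J),J).
\end{equation*}
Cauchy--Schwarz gives $g_T(D_TJ,D_TJ)\ge (u')^2$, while the flag-curvature bound together with $J\perp_{g_T}T$ and $F(T)=1$ yields $g_T(R_T(J),J)\le\lambda\,g_T(J,J)=\lambda u^2$. Hence $u''+\lambda u\ge 0$ wherever $u>0$. Comparing with $v(t):=\sin(\sqrt{\lambda}\,t)/\sqrt{\lambda}$, which solves $v''+\lambda v=0$ with $v(0)=0$, $v'(0)=1$, the Wronskian $W:=u'v-uv'$ satisfies $W'=(u''+\lambda u)v-u(v''+\lambda v)\ge 0$ and $W(0^+)=0$ by the expansion $u(t)\sim\lvert D_TJ(0)\rvert_{g_T}\,t$. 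Therefore $(u/v)'\ge 0$ and $u(t)\ge \lvert D_TJ(0)\rvert_{g_T}\,v(t)>0$ for every $t\in(0,\pi/\sqrt{\lambda})$. In particular $u(L)>0$, contradicting $J(L)=0$. This forces $\mathrm{con}_M(p)\ge \pi/\sqrt{\lambda}$ for every $p$, and taking the infimum proves the claim.

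The main obstacle, and the only genuinely Finslerian input, is the first step: confirming that along an autoparallel curve the Chern connection really acts as a $g_T$-metric covariant derivative, so that the formula for $u u''+(u')^2$ carries no stray Cartan-tensor contributions. This reduces to the identity $C_{ijk}\,T^k=0$, which is an immediate consequence of the positive homogeneity of $F$ applied to \eqref{e:FT} via Euler's identity. Once this compatibility is in place, the Sturm/Wronskian comparison above is entirely classical and identical in structure to the Riemannian proof.
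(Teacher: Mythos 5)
Your proof is correct. Note that the paper does not prove this proposition at all: it quotes it as a known result and refers to \cite[Section 9.5]{bao2000introduction}, so there is no in-paper argument to compare against. Your write-up supplies a genuine self-contained proof, and all the specifically Finslerian points are handled properly: the Jacobi equation $D_TD_TJ+R_T(J)=0$ for the Chern connection with reference vector $T=\dot\gamma$, the $g_T$-compatibility of $D_T$ along the geodesic (which indeed reduces to $C_{ijk}T^k=0$, a consequence of the zero-homogeneity of the fundamental tensor), the affineness of $g_T(J,T)$ forcing the tangential part to vanish, and the identification $g_T(R_T(J),J)=K(T;J)\,g_T(J,J)$ for $J\perp_{g_T}T$, which is where the hypothesis $K\le\lambda$ enters. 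The Sturm--Wronskian comparison with $v=\sin(\sqrt\lambda\,t)/\sqrt\lambda$ is then airtight, including the boundary behaviour $u(t)\sim t\,\lvert D_TJ(0)\rvert_{g_T}$ and the bootstrap showing $u>0$ persists up to $\pi/\sqrt\lambda$. The cited source establishes the same statement via positivity of the index form on fields vanishing at the endpoints rather than via a pointwise Jacobi-field comparison; the two routes are equivalent in substance, and yours has the minor advantage of isolating exactly where metric-compatibility along the geodesic is used. Only the weaker half $K\le\lambda$ of the two-sided bound $\lVert K\rVert\le\lambda$ is needed, as your argument makes clear.
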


\subsection{Formal definition of rescaled geodesic random walks\label{sec:form1}}
We begin this section by a brief review of the basic definitions in Markov processes used in this paper. Roughly speaking, a stochastic process is said to be Markovian if its future states depend only upon the present state, regardless of its past state.

\begin{dfn}
 Let $(\Omega,\mathcal{F},\mathbf{P})$ be a probability space.
 An $M$-valued process 
 $\xi\colon \Omega\times[0,\infty)\rightarrow M$ is Markovian if for each Borel subset of $B$ of $M$,
for all $n\geq 1$, $0\leq s_1<\cdots<s_n<s<t$ 
 we have,
\ba
\mathbf{P}(\xi_t\in B\vert \xi_{s_1},\dots, \xi_{s_n},\xi_s)=\mathbf{P}(\xi_t\in B\vert \xi_s).
\ea
\end{dfn}

The transition probability function for a Markov process is defined by
\begin{align*}
P(p,s,t,B)=\mathbf{P}(\xi_t\in B\vert \xi_s=p),\ \forall p\in M,\ \forall 0\leq s\leq t.
\end{align*}
We say $(\xi_t)_{t\geq 0}$ is time homogeneous if the following holds.
\begin{align*}
P(p,s,t,B)=P(p,0,t-s,B),\ \forall p\in M,\ \forall 0\leq s\leq t.
\end{align*}
All Markov processes considered in this paper are time-homogeneous. A time homogenous Markov process $\xi$ defines 
a semigroup $T=(T_t)_{t\geq 0}$ of linear operators on the measurable functions 
 $\mathcal{B}$
on $M$ by
\begin{align*}
T_t(f)(p)=\mathbf{E}_p[f(\xi_t)],\ p\in M,\ t\geq 0.
\end{align*} 
We say a Markov process $\xi$ is {\it Feller} if the
semigroup $T$
is a strongly continuous semigroup of positive contractions on the Banach space $\mathcal{C}_0$.

In the introduction, we gave a slightly informal definition of (rescaled) geodesic random walks. We now give a formal definition. 

Let $(M,F)$ be a geodesically complete Finsler manifold. Let $\{ \nu_p\}_{p\in M}$ be a family of measures 
such that each $\nu_p$ is a probability measure on $T_pM$. 
Denote by $\mu_p$ the mean of $\nu_p$
\begin{align*}
\mu_p\coloneqq\int_{T_pM} Y\, \nu_p (\di Y).
\end{align*}
In our setting (Hypothesis $\mathbf{H}_\nu$), the probability measures are compactly supported so $\mu_p$ exists and is finite.

\begin{dfn}
Let $N\geq 1$ and let $p_0\in M$ be fixed. A random process $(\zeta_k^N, Y_{k+1}^N)_{k\geq 0}$ is called a (rescaled) 
discrete time geodesic random walk on $M$ with initial point $p_0$ and with increments 
$\{Y_{k+1}^N\}_{k\geq 0}$ compatible with the family $\{\nu_p\}_{p\in M}$ if
\begin{enumerate}
\item the process $\zeta_k^N$ is $M$-valued, and $Y_{k+1}^N$ is $T_{\xi_k^N}M$-valued,
\item $\zeta_0^N=p_0$,
\item for each $k\geq 0$, $\text{Law}(Y_{k+1}^N)=\nu_{\zeta_k^N}^N$ where
\ba
% \nu_p^N(\di Y)&= \nu_p\left(\frac{Y-\mu_p}{\sqrt N}+\frac{\mu_p}{N}\right)\,\di Y,\text{ not very good!}\\
\nu_p^N(B)&=\int_{T_pM} \mathbb{I}_B\Big( \frac{Y-\mu_p}{\sqrt N}+\frac{\mu_p}{N}  \Big)\,\nu_p(\di Y)
\ea
for any measurable $B\subseteq T_pM$,
\item $\zeta_{k+1}^N=\exp_{\zeta_k^N}(Y_{k+1}^N)$, $k\geq 0$.
\end{enumerate}
\end{dfn}
Hence the processes $\zeta^N$ are defined by the family of measures $\{\nu_p\}_{p\in M}$ and the geometry of the exponential mapping $\exp_p$. 
The random walks are time-homogeneous since the family $\{\nu_p\}_{p\in M}$ does not depend on $k$.

In the classical (Euclidean) setting, random walks are processes with independent increments. 
In our setting the independence is understood 
in the conditional sense, i.e.\ the increments $Y_{k+1}^N$ depend only on the current position $\zeta^N_k$ and not on the 
previous positions and increments. More precisely, we introduce the natural filtration
 \begin{align}
\mathcal F_k^N:=\sigma\{(\zeta_0^N, Y^N_{1}),\dots, (\zeta^N_{k-1}, Y_k^N)\},\quad k\geq 1,
\end{align}
and say that the increments of $\zeta^N$ are independent if for each 
$f\in \mathcal{C}_b(\oplus_{i=0}^{k+1} M,\mathbb R)$
\begin{align}
\mathbf E \Big[f(\zeta_0^N,\dots,\zeta_{k}^N,\zeta_{k+1}^N)\Big|\mathcal F_{k}^N\Big]
= \int_{T_{\zeta_{k}^N}M} f(\zeta_0^N,\dots,\zeta_{k}^N, 
\exp_{\zeta_{k}^N}(Y))\,\nu_{\zeta_{k}^N}^N(\di Y).
\end{align}
It is clear that $\zeta^N$ is a homogeneous discrete time $M$-valued Markov chain
with the one-step transition operator
\begin{align}
P^N f(p) = \mathbf{E}_p f(\zeta^N_1) 
=\int_{T_{p}M} f\Big(\exp_p\Big( \frac{Y-\mu_p}{\sqrt N}+\frac{\mu_p}{N}\Big)\Big)\,\nu_{p}(\di Y),\quad f\in \mathcal{C}_b(M,\mathbb R).
\end{align}

Since we work in a continuous time setting it is convenient to transform the discrete time Markov chain $\zeta^N$ into a 
continuous time Markov process. This can be done by a standard subordination procedure.

Let $Q=(Q_t)_{t\geq 0}$ be a standard Poisson process independent of $\{\zeta^N\}$. Define a \emph{pseudo-Poisson}  process 
\begin{align*}
\xi^N_t= \zeta^N_{Q_{Nt}} ,\quad t\geq 0.
\end{align*}

Note that the sample paths of $\xi^N$ belong to $D([0,\infty),M)$. 
Hence, the Markov processes $\xi^N$ induce probability distributions $\mathbf P^N$ on the 
path space $D([0,\infty),M)$. It is easy to see that the transition semigroup 
$T^N=(T^N_t)_{t\geq 0}$ of $\xi^N_t$ has the form
% \marginpar{I: moved the formula for $T^N$ here}
\begin{align}
\label{eqn:semigroup}
T^N_t(f)(p)=\mathbf{E}_p[f(\xi^N_t)]=\ex^{-Nt}\sum_{k=0}^{\infty}\dfrac{(Nt)^k}{k!}(P^N)^k(f)(p),\ f\in\mathcal{B}.
\end{align}
Finally we introduce a of family continuous $M$-valued processes defined by
\begin{align*}
\hat \xi_t^N&=\exp_{\zeta^N_{k}}\Big(N\Big(t-\frac{k}{N}\Big)Y^N_{k+1} \Big) ,\quad t\in \Big[\frac{k}{N},\frac{k+1}{N}\Big],\quad k\geq 0.
\end{align*}

Since the manifold is geodesically complete, the processes $\zeta^N$, $\xi^N$ and $\hat \xi^N$ are well-defined 
for each $N\geq 1$. By construction the processes $\hat \xi^N$ have piecewise smooth sample paths consisting of geodesic segments, and 
induce probability distributions $\hat{\mathbf P}^N$ on the 
path space $C([0,\infty),M)$ of continuous $M$-valued functions. These are the geodesic random walks introduced and discussed in the Introduction, see   Fig.~\ref{fig:0}  there, 
and in \S \ref{sec:Katok}, see Fig.~\ref{Fig:2}. Although the processes $\hat \xi^N$ are not Markovian, the convergence of the continuous time processes
$(\zeta^N_{[Nt]})_{t\geq 0}$, $(\xi^N_t)_{t\geq 0}$ and $(\hat \xi^N_t)_{t\geq 0}$ is equivalent, see, e.g.\ \cite[Theorem 17.28]{Kallenberg-02}.
In the next sections we will mainly work with the Markov processes $\xi^N$.

\section{Proof of the main theorem.}
In this section we prove the convergence of the  geodesic random walks $\lbrace \xi^N\rbrace$.  
We will assume that the Finsler manifold $(M, F)$ is forward complete and connected (Hypothesis \textbf{H}$_{c}$) 
and has bounded geometry (Hypothesis  \textbf{H}$_b$)
and that the measures $\lbrace \nu_p\rbrace_{p\in M}$ satisfy the condition \textbf{H}$_{\nu}$ from  \S \ref{sec:main}.

\subsection{Generators  of geodesic random walks.}

In this section we show the $N$-scaled geodesic random walks on a complete Finsler manifold $(M,F)$ with bounded geometry are Feller. 
\begin{lem}
\label{lemma:smooth}
Let $\mathbf{H}_\nu$ hold true and $k\geq 0$. For any $\mathcal C^k$-smooth function $f\colon TM\to \mathbb R$ 
the mapping
\ba
p\to \int_{T_pM} f(Y)\,\nu_p(\di Y)
\ea
is also $\mathcal C^k$-smooth.
\end{lem}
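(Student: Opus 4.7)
The plan is to work in a local coordinate chart $(U;x^1,\dots,x^m)$ on $M$ and reduce the fiber integral to an integral over a fixed compact domain, at which point standard differentiation under the integral sign will yield $\mathcal{C}^k$-smoothness in $p$. Since smoothness is a local property, it suffices to establish the claim on each such $U$.

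First I would handle the disc case: $\nu$ is a smooth $m$-form on $DM$. Over $U$ we trivialize $TM|_U\simeq U\times\mathbb R^m$ and write $\nu = \rho(x,y)\,\di y^1\wedge\cdots\wedge\di y^m$ with $\rho$ smooth on $DM|_U$ (and extended smoothly to a neighborhood). The difficulty is that the domain $D_pM=\{y\colon F(x,y)\le 1\}$ varies with $x$. To eliminate this dependence, I introduce the polar-type change of variables
\begin{equation*}
\Psi_x\colon [0,1]\times\mathbb S^{m-1}\to D_pM,\qquad \Psi_x(r,\omega)=\frac{r\omega}{F(x,\omega)},
\end{equation*}
which is well-defined and smooth on $r>0$ by smoothness of $F$ on $TM\setminus\{0\}$, and has $y\in D_pM$ iff $r\le 1$ thanks to positive homogeneity. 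Its Jacobian factors as $r^{m-1}J_0(x,\omega)$ with $J_0$ smooth in $(x,\omega)$; uniform ellipticity in $\mathbf{H}_b$ ensures $J_0$ is uniformly bounded away from zero on $\mathbb S^{m-1}$. The fiber integral becomes
\begin{equation*}
\int_{T_pM}f\,\nu_p(\di Y)=\int_0^1\!\!\int_{\mathbb S^{m-1}} r^{m-1}(\rho f)\!\left(x,\tfrac{r\omega}{F(x,\omega)}\right) J_0(x,\omega)\,\di\omega\,\di r.
\end{equation*}
For each fixed $\omega$ the map $(x,r)\mapsto r\omega/F(x,\omega)$ extends $\mathcal{C}^\infty$-smoothly to $r=0$ along the fixed ray, so the integrand is $\mathcal{C}^k$ in $x$ jointly with its spatial derivatives being continuous on the compact set $U'\times[0,1]\times\mathbb S^{m-1}$ for any $U'\Subset U$. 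Dominated convergence and the standard theorem on differentiation under the integral then give $\mathcal{C}^k$ regularity in $p$.

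For the indicatrix case where $\nu$ is a smooth $(m-1)$-form on $IM$, I would use essentially the same strategy, but now parametrize $I_pM$ by the fixed sphere via $\omega\in\mathbb S^{m-1}\mapsto \omega/F(x,\omega)\in I_pM$. Pulling back $\nu$ yields a smooth density on $U\times\mathbb S^{m-1}$, and differentiation under the integral on the compact fiber $\mathbb S^{m-1}$ finishes the argument. The main technical point in both cases, and the only place where a nontrivial ingredient enters, is the construction of the smooth $x$-dependent family of diffeomorphisms trivializing the fibers; this rests entirely on the smoothness of $F$ off the zero section together with strong convexity, so no deeper input (curvature bounds, bounded geometry, or the key estimate from \cite{shen2001lectures}) is required here.
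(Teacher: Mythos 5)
Your argument is correct and is essentially the approach the paper takes: the paper simply asserts that an integral over a compact set of a function depending smoothly on parameters is smooth in the parameters, and your change of variables $\Psi_x(r,\omega)=r\omega/F(x,\omega)$ (resp.\ $\omega\mapsto\omega/F(x,\omega)$ in the indicatrix case) is exactly the standard device needed to make that assertion rigorous by fixing the $p$-dependent domain. The only cosmetic point is that uniform ellipticity is not needed for this local statement -- positivity and smoothness of $F$ off the zero section already bound $J_0$ away from zero on $U'\times\mathbb S^{m-1}$ for $U'\Subset U$.
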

This lemma is obvious since each $\nu_p$ is only supported on $D_pM$. Indeed, integral over a compact set of a function smoothly depending on parameters smoothly depends on the parameters.  

Now we are ready to show the semigroups $\lbrace T^N\rbrace$ are Feller and give the formula of the generators.

\begin{prp}
\label{prop:N-walk generator}
Suppose $(M,F)$ is complete and uniform elliptic. In addition, assume the measures $\lbrace \nu_p\rbrace$ satisfy the hypothesis $\mathbf{H}_\nu$. 
Then for each $N\geq 1$, the family of operators $T^N=( T^N_t)_{t\geq 0}$ is a conservative Feller semigroup with the generator
\begin{align}
\label{e:AN}
A_N f =N\Big( P^N f -f\Big),\quad  f\in \mathcal{C}_0.
\end{align}
\begin{proof}
Let $N\geq 1$ be fixed. 
Since by constriction $T^N$ is a strongly continuous semigroup of a pseudo-Poisson process, its generator has the form \eqref{e:AN} by Theorem 19.2 
from \cite{Kallenberg-02}. It is conservative due to assumption $\mathbf{H}_{c}$.
Let us show that $T^N_t$ maps $\mathcal \mathcal{C}_0$ into itself for $t\geq 0$. Since we have
\begin{align}
\Vert P^N f \Vert\leq \Vert f\Vert,
\end{align}
the series in \eqref{eqn:semigroup}
converges uniformly. It suffices to show $P^N$ maps $\mathcal{C}_0$ into itself.

By Lemma \ref{lemma:smooth} the mean value $\mu_p$ is a $\mathcal C^\infty$ vector field. 
Since the exponential map for Finsler manifold is at least $\mathcal C^1$, then
$P^N$ maps continuous functions into continuous functions.

For any $\varepsilon>0$, choose some compact $K\subseteq M$ such that $\vert f(x)\vert<\dfrac{\varepsilon}{2}$ for $x\notin K$. 
Fix any $p_0\in K$, and define the closed forward balls at $p_0$ for $R\geq 0$ by
\begin{align*}
B^+_{p_0}(R)\coloneqq \lbrace q\in M\colon d_a(p_0,q)\leq R\rbrace.
\end{align*}
Because $K$ is compact, there exists some $R_0>0$ such that $K\subset B^+_{p_0}(R)$ for all $R\geq R_0$. By the 
Hopf--Rinow theorem (see Theorem 6.6.1 of \cite{bao2000introduction}), the forward closed balls $B^+_{p_0}(R)$ are also compact.

For any $0\neq Y\in T_pM$ and $p\in M$, the uniform ellipticity condition in Definition~\ref{definition:bounded geometry} gives
\begin{align}
\label{eqn:symmetry coefficient}
F^2(-Y)=g_{-Y}(Y,Y)\leq C^2 g_Y(Y,Y)=C^2F^2(Y).
\end{align}
It follows that 
\begin{align}
d_a(q,p_0)& \leq C d_a(p_0,q)\leq CR_0,\ \forall q\in K;\\
d_a(p,p_0) & \geq \dfrac{1}{C} d_a(p_0,p)\geq \dfrac{R}{C},\ \forall p\in (B^+_{p_0}(R))^c,\ \forall R\geq 0.
\end{align}
Let $R_1\coloneqq C(C+2+CR_0)$, then $\forall p\in (B^+_{p_0}(R_1))^c$ and $\forall q\in K$, we have
\begin{align}
d_a(p,q)\geq d_a(p,p_0)-d_a(q,p_0)\geq \dfrac{R_1}{C}-CR_0>C+1
\end{align}
On the other hand, for $p\in M$ and $Y\in D_pM$, we have
\begin{align}
d_a(p,\textbf{e}_p^N(Y))\leq F(\dfrac{1}{\sqrt{N}}(Y-(1-1/\sqrt{N})\mu_p))\leq F(Y)+F(-\mu_p)\leq C+1.
\end{align}
Hence, $\forall Y\in D_pM$ and $p\in (B^+_p(R_1))^c$, we have $\textbf{e}_p^N(Y)\notin K$.
It follows that $\forall p\in (B^+_p(R_1))^c$:
\begin{align}
\vert P^N f (p)\vert=\biggr\lvert \int_{T_pM} f\circ \textbf{e}_p^N(Y)\, \nu_p (\di Y) \biggr\rvert \leq \dfrac{\varepsilon}{2}.
\end{align}
That is to say $\lVert P^N f \rVert\leq \dfrac{\varepsilon}{2}$ outside the compact set $B^+_P(R_1)$. We conclude that $P^N f \in \mathcal C_0$.
This completes the proof.
\end{proof}
\end{prp}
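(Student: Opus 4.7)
The plan is to exploit the pseudo-Poisson structure of $\xi^N$. Since $\xi^N_t = \zeta^N_{Q_{Nt}}$ with $Q$ an independent standard Poisson process, the semigroup $T^N$ admits the explicit series representation \eqref{eqn:semigroup}. From this representation everything we need reduces to two basic facts: that $P^N$ is a bounded linear operator on $\mathcal{C}_0$ (of norm $\leq 1$), and that the one-step transition operator is well-defined on $\mathcal{C}_0$. The generator formula $A_N f = N(P^N f - f)$ and strong continuity of $T^N$ on $\mathcal{C}_0$ are then standard facts about subordination by a Poisson clock (one can cite, e.g., Theorem 19.2 of \cite{Kallenberg-02}). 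Conservativeness follows immediately from $P^N 1 = 1$, which is just the normalization of $\nu_p$, and the fact that under forward completeness (Hypothesis $\mathbf{H}_c$) the exponential map is defined on all of $T_pM$, so no probability mass escapes.

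Therefore the heart of the matter is to show that $P^N\colon \mathcal{C}_0 \to \mathcal{C}_0$. I would split this into (i) continuity of $P^N f$ and (ii) vanishing of $P^N f$ at infinity. For (i), by Lemma \ref{lemma:smooth} the drift $p \mapsto \mu_p$ is smooth, and the Finsler exponential map is at least $\mathcal{C}^1$ on $TM\setminus\{0\}$; combined with the uniform support of $\nu_p$ in the disk bundle $DM$ and dominated convergence, continuity of $f$ transfers to continuity of $P^N f$.

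The main obstacle is (ii), vanishing at infinity. The subtlety is that the Finsler distance $d_a$ is asymmetric, so one cannot directly conclude from ``$p$ is far from $\mathrm{supp}(f)$'' that $\exp_p(\text{something})$ is also far from $\mathrm{supp}(f)$. Here one invokes the uniform ellipticity clause of bounded geometry (Definition \ref{definition:bounded geometry}(1)), which gives a constant $C$ with $F(-Y)\leq C F(Y)$ for all $Y$; this in turn yields a two-sided comparison $C^{-1}d_a(p,q) \leq d_a(q,p) \leq C d_a(p,q)$. The plan is then: given $f \in \mathcal{C}_0$ and $\varepsilon > 0$, fix $K$ compact with $|f| < \varepsilon$ outside $K$, fix a basepoint $p_0$, and note that $K$ lies inside a forward ball $B^+_{p_0}(R_0)$. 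Using $\mathbf{H}_\nu$, the support of the rescaled step lies in a ball of radius $O(1)$ in $T_pM$ (uniformly in $p$ and $N$), so $d_a(p, \exp_p(\cdot))$ is bounded by a constant, say $C+1$. A sufficiently large forward ball $B^+_{p_0}(R_1)$ (with $R_1$ chosen from $R_0$, $C$, and the step bound via the asymmetric triangle inequality) will then ensure that for $p$ outside it, every possible one-step successor stays outside $K$, so $|P^N f(p)| < \varepsilon$.

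Once $P^N\colon \mathcal{C}_0 \to \mathcal{C}_0$ is established, the contraction estimate $\|P^N f\| \leq \|f\|$ gives uniform convergence of the exponential series, hence $T^N_t$ preserves $\mathcal{C}_0$; strong continuity at $t=0$ and the generator identity follow directly from the series. Conservativeness holds since $\nu_p$ is a probability measure on $D_pM \subset T_pM$ and all exponentials are defined. The only place Finsler-specific geometry enters in a nontrivial way is the asymmetric-distance argument in step (ii), which is precisely where one must use uniform ellipticity rather than any Riemannian intuition.
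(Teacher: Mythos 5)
Your proposal is correct and follows essentially the same route as the paper: the generator formula and strong continuity via the pseudo-Poisson series and Theorem 19.2 of \cite{Kallenberg-02}, continuity of $P^N f$ from smoothness of $p\mapsto\mu_p$ and the $\mathcal C^1$ exponential map, and vanishing at infinity via the two-sided comparison of the asymmetric distance obtained from uniform ellipticity together with the uniform bound $C+1$ on the one-step displacement. The choice of an enlarged forward ball $B^+_{p_0}(R_1)$ so that one-step successors of points outside it miss $K$ is exactly the paper's argument.
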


\subsection{Convergence of the generators of geodesic random walks.}
In this section, we prove the generators $A_N$ converge on the space $\mathcal C^{\infty}_K$ to some second order elliptic operator with smooth coefficients.

Denote
\ba
f_{Y-\mu_p}(t)=f\circ \gamma_{Y-\mu_p}(t)
\ea

\begin{prp}
\label{prop:limit generator}
Let $A$ be the  differential operator defined by
\begin{align}
Af(p)\coloneqq \di f(\mu_p)+\dfrac{1}{2}\int_{T_pM} \dfrac{\di^2}{\di t^2}\biggr\vert_{t=0} f_{Y-\mu_p}(t)\,  \nu_p (\di Y),\  f\in \mathcal C^{2}.
\end{align}
Then $A$ is a second order positive definite elliptic operator of smooth coefficients and for each 
$f\in \mathcal C^{\infty}_K$
\begin{align}
\lim_{N \to \infty}\lVert A_Nf-Af\|=0.
\end{align}
\end{prp}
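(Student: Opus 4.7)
The plan is to derive the pointwise limit of $A_N f(p) = N(P^N f(p) - f(p))$ by a second-order Taylor expansion of $f$ along the geodesic from $p$ with initial velocity $v_N := \frac{Y - \mu_p}{\sqrt N} + \frac{\mu_p}{N}$, and then to upgrade pointwise convergence to convergence in the sup norm using the compactness of $\mathrm{supp}(f)$ together with the derivative estimate in Lemma~\ref{lemma:norm}.

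Set $\beta(s) := f(\gamma_{v_N}(s))$ for $s \in [0,1]$, so that $f(\exp_p(v_N)) = \beta(1)$. Taylor's theorem gives $\beta(1) - \beta(0) = \beta'(0) + \tfrac12 \beta''(0) + R$ with $|R| \le \tfrac{1}{6}\sup_{s \in [0,1]} |\beta'''(s)|$. Using the positive homogeneity $\gamma_{\lambda v}(s) = \gamma_v(\lambda s)$, one computes
\ba
\beta'(0) &= \di f(v_N) = \tfrac{1}{\sqrt N}\di f(Y - \mu_p) + \tfrac{1}{N}\di f(\mu_p),\\
\beta''(0) &= \tfrac{1}{N}\,\frac{\di^2}{\di t^2}\Big|_{t=0} f(\gamma_{W_N}(t)), \qquad W_N := Y - \mu_p + \tfrac{\mu_p}{\sqrt N}.
\ea
Multiplying by $N$ and integrating in $\nu_p$, the $\sqrt N\,\di f(Y-\mu_p)$ contribution vanishes because $\int (Y-\mu_p)\,\nu_p(\di Y) = 0$, leaving
\ba
A_N f(p) = \di f(\mu_p) + \tfrac12 \int_{T_pM} \frac{\di^2}{\di t^2}\Big|_{t=0} f(\gamma_{W_N}(t))\,\nu_p(\di Y) + N\int R\,\nu_p(\di Y).
\ea
By Lemma~\ref{lemma:norm}, $|\beta'''(s)| \le c F(v_N)^3$. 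The Finsler triangle inequality combined with $F(Y) \le 1$ for $Y \in \mathrm{supp}\,\nu_p$, Jensen's inequality giving $F(\mu_p)\le 1$, and the uniform ellipticity bound $F(-\mu_p) \le C F(\mu_p)$ yield $F(v_N) = O(N^{-1/2})$ uniformly in $(p,Y)$, so $N\int |R|\,\nu_p(\di Y) = O(N^{-1/2})$. The second-derivative integrand is likewise bounded by $cF(W_N)^2 \le C$, and since $W_N \to Y-\mu_p$ with continuous dependence of the geodesic flow on initial data, dominated convergence yields the desired pointwise limit.

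To get uniformity in $p$, observe that the integrand of $N(P^Nf-f)(p)$ vanishes unless the geodesic $\gamma_{v_N}$ meets $\mathrm{supp}(f)$; since $F(v_N) = O(N^{-1/2})$, this forces $p$ to lie in a fixed relatively compact neighborhood $U$ of $\mathrm{supp}(f)$ for all $N$ large enough, outside of which $A_N f(p) = 0 = Af(p)$. On the compact set $\overline U$ all functions of $(p,Y)$ appearing above are continuous with $Y$ ranging over a compact set, so the $O(N^{-1/2})$ remainder estimate and the dominated-convergence step for the quadratic term become uniform in $p$. Lemma~\ref{lemma:smooth} yields smoothness of the coefficients of $A$ since $\mu_p$ and the integrands are $\mathcal C^\infty$ in $p$. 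For ellipticity, the local-coordinate form of $A$ stated in Theorem~\ref{thm:convergence main theorem} has principal symbol $\xi \mapsto \tfrac12 \int \langle \xi, Y-\mu_p\rangle^2 \,\nu_p(\di Y)$, which is strictly positive for $\xi \neq 0$ because the support of $\nu_p$, being either the disc $D_pM$ or the strictly convex indicatrix $I_pM$, is not contained in any affine hyperplane of $T_pM$.

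The main technical nuisance I expect is the careful bookkeeping needed to combine the Taylor-remainder estimate with the dominated-convergence argument for the quadratic term into a single uniform-in-$p$ bound on $\overline U$; once the uniform size of the geodesic step $F(v_N) = O(N^{-1/2})$ has been extracted from Hypothesis~$\mathbf{H}_\nu$ and the uniform ellipticity part of bounded geometry, everything else is routine real analysis.
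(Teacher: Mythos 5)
Your proposal is correct and follows essentially the same route as the paper's proof: a second-order Taylor expansion along the geodesic (the paper parametrizes on $[0,1/\sqrt N]$ with initial vector $Y-(1-1/\sqrt N)\mu_p$, which is your $W_N$, rather than on $[0,1]$ with $v_N$, but by homogeneity these are identical computations), cancellation of the $\sqrt N$-order term by the centering $\int (Y-\mu_p)\,\nu_p(\di Y)=0$, the uniform remainder bound $O(N^{-1/2})$ obtained from Lemma~\ref{lemma:norm} together with $F(-\mu_p)\le C F(\mu_p)\le C$, uniformity in $p$ from the compact support of $f$, and strict positivity of the symbol $\tfrac12\int \otimes^2(Y-\mu_p)\,\nu_p(\di Y)$ because $\operatorname{supp}\nu_p$ is not contained in an affine hyperplane. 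The only cosmetic difference is that you handle the convergence of the quadratic term by dominated convergence and continuity of the geodesic flow, whereas the paper writes it out in coordinates via the Christoffel symbols and uses their boundedness and $0$-homogeneity on compact charts; both are equally valid.
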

\begin{proof}
The proof follows the steps from \cite{Jorgensen} in the Riemannian case. By computing the Taylor expansion 
of $A_N f$, we show the convergence of the first and second order terms and vanishing of other higher order terms as $N\rightarrow \infty$.

Take any $f\in \mathcal C^{\infty}_K$. We have
\ba
A_N(f)(p)& =N\Big( P^N(f)(p)-f(p)\Big)\\
 & =  N\ \int_{T_pM} \Big [ f\circ \gamma_{Y-(1-1/\sqrt{N}) \mu_p}\Big(\dfrac{1}{\sqrt{N}}\Big)-f(p)\Big]\, \nu_p (\di Y) 
\ea
Then for any $p\in M$ and $Y\in D_pM$, the Taylor expansion of 
\ba
f_{Y-(1-1/\sqrt{N}) \mu_p }(t)=f\circ \gamma_{Y-(1-1/\sqrt{N}) \mu_p}(t)
\ea
gives
\begin{align*}
f_{Y-(1-1/\sqrt{N}) \mu_p}\left(\dfrac{1}{\sqrt{N}}\right)= & f(p)+ \dfrac{1}{\sqrt{N}} \di f_p(Y-(1-1/\sqrt{N}) \mu_p)\\
& + \dfrac{1}{2N}\dfrac{\di^2}{\di t^2}\biggr\vert_{t=0}\left(f_{Y-(1-1/\sqrt{N}) \mu_p}(t)\right)+R_N(p,Y).
\end{align*}
Thus we have
\ba
\label{eqn:AN taylor}
A_Nf (p) & = N \int_{T_pM}\biggr\lbrace  \dfrac{1}{\sqrt{N}} \di f_p(Y-(1-1/\sqrt{N}) \mu_p) \\
& + \dfrac{1}{2N}\dfrac{\di^2}{\di t^2}\biggr\vert_{t=0}(f_{Y-(1-1/\sqrt{N}) \mu_p}(t)) +R_N(p,Y)\biggr\rbrace\, \nu_p (\di Y)\\
 &= \di f(\mu_p)+\int_{T_pM} \dfrac{1}{2}\dfrac{\di ^2}{\di t^2} \biggr\vert_{t=0}(f_{Y-(1-1/\sqrt{N}) \mu_p}(t))\,\nu_p (\di Y)
 +\int_{T_pM}N R_N(p,Y)\, \nu_p (\di Y).
\ea
Using Lemma~\ref{lemma:norm} and equation~\eqref{eqn:symmetry coefficient}, for any $Y\in D_pM$ and $p\in M$, there is some constant $c_f>0$ such that
\ba
\label{eqn:remainder bound}
| R_N(p,Y)| & \leq \dfrac{1}{N\sqrt{N}}\ \sup_{t \in[0,1/\sqrt{N}]}\ 
\biggr\lvert \dfrac{\di^3}{\di t^3} f_{Y-(1-1/\sqrt{N}) \mu_p}(t) \biggr\rvert \\
& \leq \dfrac{c_f}{N\sqrt{N}} F^3(Y-(1-1/\sqrt{N}) \mu_p)\\
& \leq \dfrac{c_f}{N\sqrt{N}}  (F(Y)+F(-\mu_p))^3 \\
& \leq  \dfrac{c_f}{N\sqrt{N}}(C+1)^3.
\ea
Clearly, we have from \eqref{eqn:remainder bound}:
\begin{align}
\lim_{N\to \infty} \sup_{p\in M, Y\in D_pM}| N R_N(p,Y)|=0.
\end{align}
The last term in \eqref{eqn:AN taylor} tends to zero, since $\nu_p$ is only supported on $D_pM$.

For the second order term, in a canonical coordinate of $TM$, we have
\ba
\label{eqn:second order term} 
& \dfrac{\di ^2}{\di t^2} \biggr\vert_{t=0}(f_{Y-(1-1/\sqrt{N}) \mu_p}(t))\\
 &= f_{ij}\cdot y^i\left(Y-(1-\dfrac{1}{\sqrt{N}} )\mu_p\right)  y^j\left(Y-(1-\dfrac{1}{\sqrt{N}} )\mu_p\right)\\
&\quad - f_k\cdot \Gamma^k_{ij}\left(p,Y-(1-\dfrac{1}{\sqrt{N}})\mu_p  \right) y^i\left(Y-(1-\dfrac{1}{\sqrt{N}} )\mu_p\right)  y^j\left(Y-(1-\dfrac{1}{\sqrt{N}} )\mu_p\right).
\ea
Since the formal Christoffel symbols are bounded on each compact local coordinate, the right-hand-side of \eqref{eqn:second order term} converges to
\begin{align*}
f_{ij}\cdot y^i(Y-\mu_p)y^j(Y-\mu_p) - f_k \cdot \Gamma^k_{ij}(p,Y-\mu_p)y^i(Y-\mu_p)y^j(Y-\mu_p),
\end{align*} 
as $N\rightarrow \infty$ uniformly on  $DK$  for each compact chart $K\subseteq M$, where $DK=\lbrace Y\in D_pM\colon p\in K\rbrace$.

Choose a smooth coordinate on some open $U\subseteq M$. The chain rule implies $A$ has the following form in this coordinate.
\begin{align*}
Af(p) =& \di f(\mu_p)+ \dfrac{1}{2} \biggr ( f_{ij}\int_{T_pM} y^i(Y-\mu_p)y^j(Y-\mu_p)\,\nu_p (\di Y)\\
 & - f_k\int_{T_pM} \Gamma^k_{ij}(p,Y-\mu_p)y^i(Y-\mu_p)y^j(Y-\mu_p)\, \nu_p   (\di Y) \biggr ).
\end{align*}
Because $f$ has compact support, we have
\begin{align}
\lim_{n\rightarrow \infty}\lVert A_N f-A f \rVert=0,\quad f\in \mathcal C^{\infty}_K.
\end{align}
It follows that the symbol of $A$ is: 
\begin{align}
\sigma(A)(p)=\dfrac{1}{2}\int_{T_pM} \otimes^2 (Y-\mu_p)\, \nu_p (\di Y).
\end{align}
For each $p\in M$, the measure $\nu_p$ is induced by either a smooth non-zero $(m-1)$-form on $I_pM$ or an $m$-form 
$T_pM$ (condition $\mathbf{H}_{\nu}$). Then $\sigma(A)$ is strictly positive definite, and hence $A$ is a
strictly elliptic operator.

In each compact local coordinate, the functions $\lbrace\Gamma^k_{ij}\rbrace$ are bounded, and smooth on $TM\setminus \lbrace 0\rbrace$. 
% One could exchange the order of $x_i$-differentiation and and integration similar to Lemma~\ref{lemma:smooth}. 
It follows that $A$ has smooth coefficients. This completes the proof.
 
% 
% $TM\setminus \lbrace 0\rbrace$. Using Lemma $\ref{lemma:integral smooth}$ and $F(\mu_p)<1$, we can see the coefficients of $A$ are smooth.
\end{proof}

\subsection{Tightness of the family $\lbrace \xi^N\rbrace$.}
In this section we prove the family of random walks 
$\lbrace \xi^N\rbrace$ is tight in $D([0,\infty),M)$. Recall that the symmetrized distance $d$ makes $(M,d)$ a complete separable metric space, as $(M,F)$ is forward complete and has bounded geometry.

\begin{prp}
\label{prop:simply connected case tightnesss}
Let the Finsler manifold  $(M,F)$ and the family 
$\{ \nu_p\}_{p\in M}$ 
satisfy Assumptions  \emph{\textbf{H}}$_{c}$,  \emph{\textbf{H}}$_b$ and $\mathbf{H}_\nu$. 
Then the family of random walks $\lbrace \xi^N\rbrace_{N\geq 1}$ is tight in $D([0,\infty),M)$.
\end{prp}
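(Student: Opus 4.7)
The plan is to apply a standard tightness criterion for càdlàg Markov processes on the Polish space $(M,d)$ (see e.g.\ \cite[Ch.~17]{Kallenberg-02}), which reduces tightness of $\{\xi^N\}_{N\ge 1}$ in $D([0,\infty),M)$ to verifying, uniformly in $N$: (i) the \emph{compact containment} condition -- for every $T,\varepsilon>0$ there exists a compact $K\subset M$ such that $\mathbf P(\xi^N_t\in K\text{ for all } t\in[0,T])\ge 1-\varepsilon$; and (ii) relative compactness of $\{f\circ \xi^N\}_{N\ge 1}$ in $D([0,\infty),\mathbb R)$ for each $f$ in a dense subset of $\mathcal C_0$, for which I would take $\mathcal C^{\infty}_K$.

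For (ii), the process $f\circ \xi^N$ is a bounded càdlàg process with compensator $\int_0^\cdot A_Nf(\xi^N_s)\,\di s$; by Proposition~\ref{prop:limit generator} we have $\sup_N\|A_Nf\|<\infty$, and since the jumps of $\xi^N$ are of size $O(1/\sqrt N)$ those of $f\circ\xi^N$ vanish uniformly as $N\to\infty$. Aldous' criterion for real-valued semimartingales then reduces (ii) to showing that for bounded stopping times $\tau_N$ and $\delta_N\downarrow 0$, $f(\xi^N_{\tau_N+\delta_N})-f(\xi^N_{\tau_N})\to 0$ in probability, which follows from the optional sampling theorem applied to the martingale part together with the uniform bound on $A_Nf$.

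The genuine content is (i). Fix $p_0\in M$ and let $\rho(p)\coloneqq d(p_0,p)$. The strategy is to build a smooth Lyapunov function $V_R\in\mathcal C^\infty_b(M)$ (for each large $R>0$) that behaves like a smoothing of $\rho$ on $\{\rho\le R\}$ and saturates above $R+1$, and to establish the uniform bound
\[
\sup_{N\ge 1,\,p\in M}|A_N V_R(p)|\le C_{*}
\]
with $C_*$ independent of $R$ and $N$. Granting this, the process $V_R(\xi^N_t)+C_*t$ is a submartingale, and Doob's maximal inequality yields $\mathbf P(\sup_{t\le T}\rho(\xi^N_t)\ge R)\le (C_*T+V_R(p_0))/R$ uniformly in $N$. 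Since under $\mathbf H_c$ every closed forward ball of $(M,F)$ is compact by the Hopf--Rinow theorem, (i) follows by taking $R$ large. The construction of $V_R$ is facilitated by the uniform lower bound on the conjugate radius (Proposition~\ref{prop:Morse}) supplied by $\mathbf H_b$, which allows smoothing $\rho$ at a scale independent of the base point.

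The main obstacle is the uniform bound on $A_N V_R$. Expanding $N\int_{T_pM}[V_R\circ \exp_p(\tfrac{1}{\sqrt N}(Y-\mu_p)+\tfrac{1}{N}\mu_p)-V_R(p)]\,\nu_p(\di Y)$ by Taylor as in the proof of Proposition~\ref{prop:limit generator}, one must control the first- and second-order variations of the (smoothed) distance function along Finsler geodesics uniformly in the base point and in the direction. This is exactly the content of Lemma~\ref{lemma:generator bounds}: the uniform ellipticity, the Morse--Schoenberg-type lower bound on the conjugate radius, and the uniform flag- and $T$-curvature bounds of $\mathbf H_b$ combine to yield the required Finslerian Hessian-type comparison estimate, whose proof rests on the non-trivial geometric result of \cite[\S 15]{shen2001lectures}. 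This is the point where the Finslerian setting genuinely departs from the Riemannian case of \cite{Jorgensen} and where the new geometric input of the paper is indispensable.
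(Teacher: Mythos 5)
Your overall framework (compact containment plus control of $\{f\circ\xi^N\}$ for $f$ in a dense subset of $\mathcal C_0$) is a legitimate variant of what the paper does, and your part (ii) is essentially sound: $\sup_N\|A_Nf\|<\infty$ for $f\in\mathcal C^\infty_K$ does follow from Proposition~\ref{prop:limit generator}, and the semimartingale/Aldous argument for the real-valued processes $f\circ\xi^N$ goes through (the paper instead verifies Aldous' criterion directly for the $M$-valued process via uniform exit-time estimates, Lemmas~\ref{lemma:operatorlocalness} and~\ref{l:AC}).

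The gap is in your treatment of compact containment, which you correctly identify as the genuine content. Your Lyapunov function $V_R$ is built from the global distance function $\rho=d(p_0,\cdot)$ from a \emph{fixed} point, and you claim a uniform two-sided bound $\sup_{N,p}|A_NV_R(p)|\le C_*$ obtained by ``smoothing $\rho$ at a scale independent of the base point'' thanks to the conjugate radius bound. This does not work as stated: $\rho$ fails to be smooth (it is only Lipschitz) on the cut locus of $p_0$, and Hypothesis $\mathbf H_b$ bounds the \emph{conjugate} radius from below but gives no lower bound on the \emph{injectivity} radius --- the paper explicitly notes that $\operatorname{inj}_M$ can be zero. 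Consequently there is no uniform scale at which $\rho$ can be mollified so that $\tfrac{\di^2}{\di t^2}V_R\circ\gamma(t)$ is uniformly bounded along all unit-speed geodesics, and Lemma~\ref{lemma:generator bounds} (the Hessian comparison input you invoke) controls only the \emph{local} distance functions $d_a^p(0,\cdot)$ lifted to $(T_pM,F_p)$ within the conjugate radius, not $d(p_0,\cdot)$ globally. Without $C^2$ control the discrete generator only satisfies the trivial Lipschitz bound $|A_NV_R|=O(\sqrt N)$. A one-sided bound $A_NV_R\le C_*$ might be salvageable via Calabi-type upper barriers (and note your Doob step in fact needs exactly this supermartingale inequality, not the submartingale one you state, to produce the numerator $V_R(p_0)+C_*T$ rather than $\|V_R\|+C_*T$), but adapting barrier arguments to the nonlocal operators $A_N$ is a substantial missing step. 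The paper avoids all of this by staying local: it constructs bump functions $f^\delta_p$ at scale $\delta<\delta_c$ on the tangent spaces where the lifted distance is genuinely smooth, transfers the resulting exit-time bound from $(T_pM,F_p)$ back to $M$ (Lemma~\ref{lemma:exit time comparison}), and then obtains compact containment by chaining unit-ball exit times with an exponential-moment renewal argument (Lemma~\ref{l:cc}), never touching the global distance function.
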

\begin{proof}
The statement follows from the Aldous criteria (Lemma \ref{l:AC}) and the compact containment condition (Lemma \eqref{l:cc}) that will be proven 
in this section.
\end{proof}

Our goal consists of obtaining uniform estimates for the oscillation of the random walks $\xi^N$, see Equation\ \eqref{eqn:Aldous}.
Because $M$ is in general non-compact, the injective radius lower bound $\operatorname{inj}_M$ can be zero. 
Since the non-symmetrized distance function 
$d_a(p,\cdot)$
is smooth only within the injective radius, we work on the tangent bundle $TM$ to bypass this technical problem. 
To prepare the proof of Lemma \ref{l:AC} as well as Lemma~\ref{l:cc}, for $R\geq 0$, define 
\begin{align*}
D_p(R)\coloneqq \lbrace Y\in T_pM\colon F(Y)\leq R\rbrace.
\end{align*}
We make the following construction.

The condition $K\leq \lambda$ implies there exists some $0<\delta_c<1$ so that the conjugate radius $\mathrm{con}_M>\delta_c$, see Proposition \ref{prop:Morse}. 
For each $p\in M$, the exponential map $\exp_p$ is a smooth immersion on $D_p(\delta_c)$ except possibly at $0$. 
Then we can construct a 
geodesically complete smooth Finsler function $F_p$ on $T_pM$ such that $F_p=(\exp_p)^*(F)$ on $D_p(\frac{\delta_c}{2})$, 
while $F_p$ is the standard Minkowski metric on 
$T_pM\setminus D_p(1)$, under any standard identification $T_pM\simeq\mathbb{R}^m$. To distinguish it from the distance functions on $(M,F)$, we denote the asymmetric and symmetric 
distance on $(T_pM,F_p)$ by $d_a^p$ and $d^p$, respectively.
Note that the injective radius of $F_p$ at $0\in T_pM$ is at least $\frac{\delta_c}{4}$.

 Now for each $p\in M$ we construct 
the measures $\lbrace\tilde{\nu}_q\rbrace_{q\in T_pM}$, 
so that on $D_p(\delta_c/2)$, the measures $\lbrace\tilde{\nu_q}\rbrace_{q\in D_p(\delta_c/2)}$ 
are the lift of $\lbrace \nu_o\rbrace_{o\in M}$ by the exponential map $\exp_p$. In addition, we require the measures 
$\lbrace \tilde{\nu}_q\rbrace_{q\in T_pM}$ satisfy the condition $\mathbf{H}_{\nu}$.

Then for each $p\in M$ and $N\geq 1$, we construct an $N$-scaled geodesic random walk $\xi^{N,p}$ on the Finsler manifold $(T_pM,F_p)$ 
starting at $0\in T_pM$, using the prescribed measures $\lbrace \tilde{\nu}_q\rbrace_{q\in T_pM}$ as in 
Section~\ref{sec:form1}. Note as $(T_pM,F_p)$ satisfies $\mathbf{H}_{b}$ and $\mathbf{H}_{c}$, all results we proved earlier are true for the random walks $\xi^{N,p}$.
\begin{lem}
\label{lemma:generator bounds}
There exists some $\delta_0>0$ so that for each $\delta\in(0,\delta_0)$, 
there exists a family of functions $\lbrace f_p^{\delta}\rbrace_{p\in M}$ such that
\begin{enumerate}%[label=(\Alph*)]
\item \label{condition:local1}
Each $f_p^{\delta}$ is a function on $T_pM$ with $0\leq f_p^{\delta}\leq 1$ such that $f_p^{\delta}(0)=1$ and $f^{\delta}_p(q)=0$ if $q\notin D_p(\delta)$.
\item \label{condtion:op-bound}
Denote $A_{N,p}$ the generator associated to $\xi^{N,p}$, then there exists a constant $\tilde{C}(\delta)>0$ such that
\begin{align}
\sup_{N\geq 1}\sup_{p\in M} \sup_{q\in T_pM}\Big| A_{N,p}f_p^{\delta}(q)\Big| \leq \tilde{C}(\delta).
\end{align}
\end{enumerate}
\end{lem}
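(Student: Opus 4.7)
The plan is to take each $f_p^\delta$ to be a smooth radial cut-off with respect to the $F_p$-distance to the origin in $T_pM$, and then to control $A_{N,p}f_p^\delta$ by the same third-order Taylor expansion used in the proof of Proposition \ref{prop:limit generator}. The uniform-in-$p$ derivative bounds that feed this expansion are furnished by the Finslerian Hessian comparison of \cite[\S 15]{shen2001lectures}.

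Concretely, choose $\delta_0\in(0,\delta_c/4)$ so that, by Proposition \ref{prop:Morse} and the construction of $F_p$, the function $\rho_p(q):=d_a^p(0,q)$ is smooth on $\{0<\rho_p\le \delta_0\}$. Fix a smooth $\phi\colon[0,\infty)\to[0,1]$ with $\phi\equiv 1$ on $[0,\tfrac12]$ and $\phi\equiv 0$ on $[1,\infty)$, and set
\[
f_p^\delta(q):=\phi\bigl(\rho_p(q)/\delta\bigr),\quad \delta\in(0,\delta_0).
\]
Because $\phi$ is constant near $0$ and vanishes near $1$, the non-smoothness of $\rho_p$ at the origin and outside the injectivity ball plays no role, and $f_p^\delta$ is smooth on all of $T_pM$. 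Moreover $f_p^\delta(0)=1$, $0\le f_p^\delta\le 1$, and $\mathrm{supp}\,f_p^\delta\subset D_p(\delta)$, which verifies condition (\ref{condition:local1}). All nonzero derivatives of $f_p^\delta$ are supported in the annulus $A_p^\delta:=\{\delta/2\le \rho_p\le \delta\}$, which lies strictly inside the injectivity ball and bounded away from the singular point $0$.

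Applying the expansion \eqref{eqn:AN taylor}--\eqref{eqn:remainder bound} to $f_p^\delta$ along geodesics of $F_p$ yields
\[
A_{N,p}f_p^\delta(q)=df_p^\delta(\tilde\mu_q)+\frac12\int_{T_qT_pM}\frac{d^2}{dt^2}\bigg|_{t=0}f_p^\delta\bigl(\gamma_{Y-\tilde\mu_q}(t)\bigr)\,\tilde\nu_q(dY)+O(N^{-1/2}),
\]
where the $O(N^{-1/2})$ error is, by the version of Lemma \ref{lemma:norm} on $(T_pM,F_p)$, bounded by a constant multiple of $c_{f_p^\delta}(C+1)^3$, with $c_{f_p^\delta}$ a uniform bound on the first three derivatives of $f_p^\delta$ along unit-speed $F_p$-geodesics on $A_p^\delta$. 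Hence the lemma reduces to showing that $c_{f_p^\delta}$ can be chosen independently of $p$, which by the chain rule amounts to uniform bounds on $d\rho_p$, $\mathrm{Hess}(\rho_p)$ and one further radial derivative of $\rho_p$ on $A_p^\delta$.

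The first-order bound is immediate from the Lipschitz property of $\rho_p$ together with uniform ellipticity. The Hessian estimate is the heart of the matter, and it is where hypothesis $\mathbf{H}_b$ is used in full: $(T_pM,F_p)$ inherits from $(M,F)$ the uniform ellipticity constant $C$, the flag-curvature bound $\|K\|\le\lambda$, and the $T$-curvature bound \eqref{e:T}, which are exactly the inputs required by the Finslerian Hessian comparison theorem of \cite[\S 15]{shen2001lectures}; it yields a pointwise bound on $\mathrm{Hess}(\rho_p)$ on the compact annulus $A_p^\delta$ depending only on $(\lambda,C,\delta,\delta_c)$. The third-order bound then follows by differentiating this comparison once more on $A_p^\delta$, producing constants of the same form, and assembling the Taylor expansion gives the desired $\tilde C(\delta)$. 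The main obstacle is exactly this Hessian step: in the Riemannian case it would follow from the classical Rauch comparison, but in the non-reversible Finslerian setting the $T$-curvature hypothesis is indispensable --- this is precisely the ``nontrivial and not widely known'' geometric ingredient singled out in the introduction as the key new input of the paper.
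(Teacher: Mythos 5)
Your construction of $f_p^\delta$ and your identification of the Hessian comparison theorem as the key geometric input both match the paper. However, there is a genuine gap in the analytic part of your argument: you run the \emph{third-order} Taylor expansion of Proposition \ref{prop:limit generator} (with remainder \eqref{eqn:remainder bound}), and this forces you to bound the third derivative of $t\mapsto f_p^\delta(\gamma(t))$, i.e.\ a third radial-type derivative of $\rho_p$, uniformly in $p$. Your claim that this ``follows by differentiating the comparison once more'' does not work: the Hessian comparison theorem is a pointwise two-sided \emph{inequality}, not an identity one can differentiate, and uniform bounds on third derivatives of the distance function would in general require control of derivatives of the curvature, which Hypothesis $\mathbf{H}_b$ does not provide (it bounds only $K$ and $T$ themselves). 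So the final step of your reduction is unsupported.

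The paper circumvents exactly this obstacle by stopping the Taylor expansion at order two with the Lagrange form of the remainder: there exist $t_N(Y)\in(0,1/\sqrt N)$ with
\begin{align*}
A_{N,p}f_p^{\delta}(q)=\di f_p^{\delta}(q)(\tilde\mu_q)
+\frac{1}{2}\int_{T_q(T_pM)}\frac{\di^2}{\di t^2}\biggr\vert_{t=t_N(Y)}\bigl(\psi^\delta\circ h_N(Y)\bigr)(t)\,\tilde\nu_q(\di Y),
\end{align*}
an \emph{exact} identity with no $O(N^{-1/2})$ error term. Only the first and second $t$-derivatives of $h_N(Y)(t)=d_a^p(0,\gamma(t))$ at the intermediate time then need to be bounded, and these are precisely what the Lipschitz property of $\rho_p$ and the Hessian comparison (using both the flag- and $T$-curvature bounds) supply. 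To repair your proof you should replace the third-order expansion by this second-order one. A secondary point: you should also take $\delta_0<\pi/(2\sqrt{\lambda})$ in addition to $\delta_0<\delta_c/(4(C+1))$, since otherwise the lower comparison bound involving $\cot(\sqrt{\lambda}\,r)$ can degenerate on the annulus where the cut-off is non-constant.
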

\begin{proof}
% We will give the proof for the case when the measures $\{\tilde\nu_q\}$ are supported on the indicatrices. The proof for the measures supported on the closed
% 1-balls is exactly the same.
The general scheme to prove this lemma is as follows. We construct the family of functions $\lbrace f^{\delta}_p\rbrace$ using the distance functions $d_a^p(0,\cdot)$ on $(T_pM,F_p)$. 
The Hessian comparison theorem from \cite[Section 15.1]{shen2001lectures} applied to Finsler manifolds with bounded flag and $T$-curvature suggests the distance functions have uniformly bounded Hessians. This fact applied to 
the Taylor expansion of $f^{\delta}_p$ 
implies the family of functions we constructed satisfies the conditions listed in the Lemma.

For $\delta_c\in(0,1)$ chosen above, $C>0$ and $\lambda>0$ from Definition~\ref{definition:bounded geometry}, let
$0<\delta_0<\min\{\frac{\delta_c}{4(C+1)},\frac{\pi}{2\sqrt{\lambda}}\}$. Fix any $p\in M$, and denote $d_a^p(0,\cdot)$ the distance function  for $(T_pM,F_p)$ from $0\in T_pM$. Because the injective radius for $F_p$ at $0$ is at least $\frac{\delta_c}{4}$, the distance function $d_a^p(0,\cdot)$ is smooth on the open set $D_p(\delta)\setminus \lbrace 0\rbrace$ for each $0<\delta<\delta_0$.

Fix any $\delta\in (0,\delta_0)$. Let $\psi^{\delta}\colon \mathbb R\to\mathbb R$ be a smooth function with compact support contained in $[-\frac{\delta}{2},\frac{\delta}{2}]$. 
Further suppose $0\leq \psi^{\delta}\leq 1$ and $\psi^{\delta}\equiv 1$ on $I_1=[-\frac{\delta}{4},\frac{\delta}{4}]$. 
For each $p\in M$ , the function
\begin{align}
f_p^{\delta}(q):=\psi^{\delta} \circ  d_a^p(0,q),\quad q\in T_pM.
\end{align}
is smooth on $T_pM$ and satisfies condition \ref{condition:local1}.

To prove condition \ref{condtion:op-bound}, for any $q\in (T_pM,F_p)$, let $\tilde{\mu}_q$ be the mean of $\tilde{\nu}_q$. An argument similar to Proposition \ref{prop:N-walk generator} shows that for any $q\in T_pM$ 
\begin{align}
\label{eqn:operatorform1}
A_{N,p}f_p^{\delta} (q)=N\Big[ \int_{T_q(T_pM)} f_p^{\delta}\circ \gamma_{Y-(1-1/\sqrt{N})\tilde{\mu}_q}
\Big(\frac{1}{\sqrt{N}}\Big)\, \tilde{\nu}_q (\di Y) -f_p^{\delta}(q) \Big].
\end{align}
Note $\lbrace \tilde{\nu}_q\rbrace_{q\in T_pM}$ satisfies $\mathbf{H}_{\nu}$, so we only need to integrate over
$Y\in T_q(T_pM)$ with $F_p(Y)\leq 1$.

To simplify the notations, define for $Y\in T_q(T_pM)$
\begin{align}
h_N(Y)(t)& :=d_a\Big(0,\gamma_{Y-(1-1/\sqrt{N})\mu_q}(t)\Big),\\
h_N^{\delta}(Y)(t)& :=\psi^{\delta}\circ h_N(Y)(t)=f_p^{\delta}\circ \gamma_{Y-(1-1/\sqrt{N})\mu_q}(t), \quad t\geq 0.
\end{align}
By Taylor theorem, there exist functions $\lbrace t_N\rbrace$ with 
\ba
t_N\colon T_q (T_pM)\rightarrow \Big(0,\frac{1}{\sqrt{N}}\Big)
\ea
such that 
\begin{align}
\label{eqn:op-taylor}
h_N^{\delta}(Y)\Big(\frac{1}{\sqrt{N}}\Big)
=f_p^{\delta}(q)+\di f_p^{\delta}(q)\Big(\frac{Y-\tilde{\mu}_q}{\sqrt{N}}+\frac{1}{N}\tilde{\mu}_q\Big)
+\frac{1}{2N}\frac{\di ^2}{\di t^2}\biggr\vert_{t=t_N(Y)}\left(h_N^{\delta}(Y)(t)\right).
\end{align}
Using Equations \eqref{eqn:operatorform1} and \eqref{eqn:op-taylor}, we get 
\begin{align}
A_N f_p^{\delta} (q)& =N \int_{T_q(T_pM)}\Big[ \di f_p^{\delta}(q)\left( \dfrac{Y-\tilde{\mu}_q}{\sqrt{N}}+\dfrac{1}{N}\tilde{\mu}_q\right)+ \dfrac{1}{2N}\dfrac{\di ^2}{\di t^2}\biggr\vert_{t=t_N(Y)}\left(h_N^{\delta}(Y)(t)\right)\,\Big]
\tilde{\nu}_q (\di Y) \nonumber \\
\label{eqn:second order tight}
& 
=\di f^{\delta}_p(q)(\tilde{\mu}_q)
+ \frac{1}{2}\int_{T_q(T_pM)} \frac{\di ^2}{\di t^2}\biggr\vert_{t=t_N(Y)}\left(h_N^{\delta}(Y)(t)\right) \tilde{\nu}_q (\di Y).
\end{align}
We need to show the equation above is uniformly bounded for all $p\in M$, $q\in T_pM$ and $N\geq 1$.

First we show for each $0<\delta<\delta_0$,
\begin{align}
\label{eqn:first order bound}
\sup_{p\in M}\sup_{q\in T_pM}\lvert\di f^{\delta}_p(q)(\tilde{\mu}_q)\rvert<\infty.
\end{align}
Clearly if $d_a^p(0,q)\leq \frac{\delta}{4}$ or $d_a^p(0,q)\geq \frac{\delta}{2}$, we have
\begin{align*}
\di f^{\delta}_p(q)(\tilde \mu_q)=0.
\end{align*}
For any $q\in T_pM$ such that $\frac{\delta}{4}\leq d_a(0,q)\leq \frac{\delta}{2}$, 
the Finlser metric $F_p|_{T_q(T_pM)}$ and the measure $\tilde{\nu}_q$ are the pull backs of $F$ and $\lbrace \nu_o\rbrace_{o\in M}$ by $\exp_p$, respectively. Thus $F_p(\tilde{\mu}_q)<1$ and $F_p(-\tilde{\mu}_q)\leq C$. It follows that
\begin{align*}
| \tilde{\mu}_q(d^p_a(0,\cdot)) | \leq C\quad \mathrm{if }\ \frac{\delta}{4}\leq d_a^p(0,q)\leq \frac{\delta}{2}.
\end{align*}
The function $d_a^p(0,\cdot)$ is smooth at $q$ For $q\in T_pM$ with $\frac{\delta}{4}\leq d_a(0,q)\leq \frac{\delta}{2}$. Hence Equation 
\eqref{eqn:first order bound} holds by the chain rule.

Now it suffices to prove the integrand in \eqref{eqn:second order tight} is uniformly bounded 
for all $Y\in T_q(T_pM)$, $q\in T_pM$, $p\in M$ and $N\geq 1$. By the construction of $\psi^{\delta}$, for the case
\ba
d_a^p \Big(0,\gamma_{Y-(1-1/\sqrt{N})\tilde \mu_q}(t_N(Y))\Big)\geq \frac{\delta}{2}\quad 
\mathrm{or}\quad d_a^p\Big(0,\gamma_{Y- (1-1/\sqrt{N})\tilde \mu_q}(t_N(Y))\Big)\leq \frac{\delta}{4},
\ea
we have
\begin{align}
\frac{\di ^2}{\di t^2}\biggr\vert_{t=t_N(Y)} h_N^{\delta}(Y)(t) =0.
\end{align}
For the case
\ba
\label{e:dpa}
\frac{\delta}{4}\leq d_a^p\big(0,\gamma_{Y-(1-1/\sqrt{N})\tilde\mu_q}(t_N(Y))\big)\leq \frac{\delta}{2},
\ea
the function  $h_N(Y)(t)$ is smooth on some interval containing $t=t_N(Y)$, because $d_a^p(0,\cdot)$ is smooth on $D_p(\delta)\setminus\lbrace 0 \rbrace$.  
Since $\psi^{\delta}$ is in $\mathcal C^{\infty}_K$, it is sufficient to show the first and second derivatives $h_N(Y)(t)$ with respect to $t$ are uniformly bounded.

To simplify the notations we denote by $\nabla\rho:=\nabla d_a^p(0,\cdot)$ the \emph{Finsler gradient}, see, e.g.\ 
\cite[Equation (3.14) in \S 3.2]{shen2001lectures}.
Following \cite[\S 15.1]{shen2001lectures}, let us define 
\begin{align*} 
& \hat{\mathbf{g}}\coloneqq  \tilde{g}_{\nabla\rho},
\quad r_N(Y)\coloneqq d_a^p\Big(0,\gamma_{Y-(1-1/\sqrt{N})\tilde\mu_q}(t_N)\Big), \\
& \dot{\gamma}_N\coloneqq \frac{\di }{\di t}\Big|_{t=t_N(Y)}\gamma_{Y-(1-1/\sqrt{N})\tilde\mu_q}(t),\quad  Y^{\perp}\coloneqq \dot{\gamma}_N-\hat{\mathbf{g}}(\dot{\gamma}_N,\nabla\rho)\nabla\rho.
\end{align*}
Here $\tilde{g}$ is the fundamental tensor of $F_p$.

Because $\exp_p$ is an isometric immersion on $D_p(\delta)$, on $(T_pM,F_p)$, we also have the uniform elliptic conditions
\begin{align}
\dfrac{1}{C^2}\tilde{g}_v(v,v)\leq \tilde{g}_u(v,v)\leq C^2 \tilde{g}_v(v,v),
\end{align}
for any $0\neq u,v\in T_q(T_pM)$ with $q\in D_p(\delta)$.
It follows that for all $Y\in T_q(T_pM)$ with $F_p(Y)\leq 1$, $q\in D_p(\delta)$ we have
\begin{align*}
&F_p(  \dot{\gamma}_N  )=F_p\Big(Y-(1-1/\sqrt{N})\tilde{\mu}_q\Big)\leq C+1,\\
&F_p(-\dot{\gamma}_N)\leq C+1.
\end{align*}
 This implies  that for all $Y\in T_q(T_pM)$ with $F_p(Y)\leq 1$,
 \begin{align*}
 \Big| \frac{\di}{\di t}\Big|_{t=t_N(Y)} h_N(Y)(t)\Big|\leq C+1,
 \end{align*}
if \eqref{e:dpa} holds.

The second derivative of $h_N(Y)(t)$ can be estimated by the Hessian comparison theorem 
(see Section 15.1 of \cite{shen2001lectures}), 
using the bounded curvature conditions in Definition \ref{definition:bounded geometry}. 
Note that $(D_p(\delta),F_p)$ also has flag curvature and $T$-curvature bounded by $\vert K\vert\leq \lambda$ and $\vert T\vert\leq \lambda$, 
because $\exp_p$ restricted to $(D_p(\delta),F_p)$ is an isometric immersion. 
Since the injective radius of $F_p$ at $0$ is at least $\frac{\delta_c}{4}>\delta$, the Hessian comparison theorem implies:
\begin{align}
\left(\sqrt{\lambda}\cdot\cot(\sqrt{\lambda}\cdot r(Y))-\lambda\right)\hat{\mathbf{g}}(Y^{\perp},Y^{\perp})
\leq \frac{\di^2}{\di t^2}\biggr\vert_{t=t_N(Y)}h_N(Y(t)),\\
\frac{\di ^2}{\di t^2}\Big|_{t=t_N(Y)} h_N(Y(t))\leq \left(\sqrt{\lambda}\cdot\coth(\sqrt{\lambda}\cdot r(Y))+\lambda\right)\hat{\mathbf{g}}(Y^{\perp},Y^{\perp}).
\end{align}
Using the fact $\hat{\mathbf{g}}(\nabla\rho,\nabla\rho)=F^2_p(\nabla\rho)=1$ on $D_p(\delta)\setminus \lbrace 0\rbrace$, we get
\begin{align}
\label{eqn:hessian norm}
\hat{\mathbf{g}}(Y^{\perp},Y^{\perp})&\ =\left\lvert \hat{\mathbf{g}}(\dot{\gamma}_N,\dot{\gamma}_N)-\hat{\mathbf{g}}^2(\dot{\gamma}_N,\nabla\rho)\right\rvert.
\end{align}
If $x\in D_p(\delta)$, for any tangent vectors 
$Y_1,Y_2\in T_x(T_pM)$ with $Y_1\neq 0$, the fundamental inequality 
in Finsler geometry
(see 1.2.16 of \cite{bao2000introduction}) and the inequality $F_p(Y_2)\leq C F_p(-Y_2)$ give
\ba
\vert \tilde{g}_{Y_1}(Y_1,Y_2)\vert\leq C F_p(Y_1)F_p(Y_2).
\ea
Substituting this into \eqref{eqn:hessian norm} and using uniform ellipticity, for $Y$ such that $F_p(Y)\leq 1$ and \eqref{e:dpa} holds, we obtain
\begin{align*}
\hat{\mathbf{g}}(Y^{\perp},Y^{\perp}) & \leq C \tilde{g}_{\dot{\gamma}_N}(\dot{\gamma}_N,\dot{\gamma}_N)+C^2F^2_p(\dot{\gamma}_N)\\
& \leq 2C^2F_p^2\Big(Y-\Big(1-\frac{1}{\sqrt{N}}\Big)\tilde \mu_q\Big)\\
& \leq 2C^2(C+1)^2.
\end{align*}
Since $\frac{\delta}{4}\leq r_N(Y)\leq \frac{\delta}{2}$ and $\delta<\frac{\pi}{2\sqrt{\lambda}}$, we have for all $N\geq 1$:
\begin{align}
 0\leq \cot(\sqrt{\lambda}\cdot r_N(Y))\leq \coth(\sqrt{\lambda}\cdot r_N(Y)). 
\end{align}
Then for all $N\geq 1$ and $Y$ with $F_p(Y)\leq 1$, we have the estimate:
\begin{align}
\left\lvert \frac{\di^2}{\di t^2}\Big|_{t=t_N(Y)} h_N(Y(t))\right\rvert 
& \leq \Big(\sqrt{\lambda}\cdot\coth(\frac{\delta\sqrt{\lambda}}{4}) +\lambda \Big)\hat{\mathbf{g}}(Y^{\perp},Y^{\perp}),\\
  & \leq 2C^2(C+1)^2\Big(\sqrt{\lambda}\cdot\coth(\frac{\delta\sqrt{\lambda}}{4}) +\lambda \Big).
    \end{align}
    
 This shows the second derivative of $h_N(Y)(t)$ evaluated at $t=t_N(Y)$ is also  uniformly and absolutely bounded, 
 if \eqref{e:dpa} holds true. Then there exists some $\tilde C(\delta)>0$ such that condition \ref{condtion:op-bound} holds.
This completes the proof. 
\end{proof}

The family of functions $\{f^\delta_p\}$ will be used now to estimate the first exit time from a $\delta$-balls of the geodesic random walk $\xi^N$.

For each $p\in M$, $N\geq 1$ and $\delta>0$, define the following stopping times for the random walk $\xi^N$ on $M$ and $\xi^{N,p}$ on $T_pM$:
\begin{align}
\tau^{N,\delta}& \coloneqq \inf\lbrace t>0\colon d(\xi^N_t,p)>\delta\rbrace,\\
\tau^{N,\delta}_p& \coloneqq \inf\lbrace   t>0 \colon d^p(\xi^{N,p}_t,0)>\delta  \rbrace,\quad 0\in T_pM.
\end{align}
Now we compare the exit time probabilities of the $\delta$-balls for $\xi^N_t$ and $\xi^{N,p}_t$ for sufficiently large $N$.
 
\begin{lem}
\label{lemma:exit time comparison}
For any $p\in M$, $N\geq 1$ and $\delta$ such that $0<\delta<\delta_0$ and $\frac{2(C+1)}{\sqrt{N}}<\frac{\delta_c}{4}$, we have
\begin{align}
\mathbf{P}_p(\tau^{N,\delta}\leq t)\leq \mathbf{P}_0(\tau^{N,\delta}_p\leq t), \quad \forall t\geq 0.
\end{align}
\end{lem}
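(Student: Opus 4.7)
My plan is to couple $\xi^N$ (started at $p$ on $M$) and $\xi^{N,p}$ (started at $0$ on $T_pM$) on a common probability space so that $\xi^N_s = \exp_p(\xi^{N,p}_s)$ as long as $\xi^{N,p}$ remains in the region $D_p(\delta_c/2)$. The coupling is driven by a shared Poisson clock of rate $N$ and, at the $k$-th jump, a shared unit-disc sample $Y_{k+1}$; whenever $\zeta^{N,p}_k\in D_p(\delta_c/2)$, the $\xi^N$-increment is defined as the push-forward of the $\xi^{N,p}$-increment under $d\exp_p$. This is consistent because by the construction preceding the lemma, $F_p$ coincides with $(\exp_p)^*F$ on $D_p(\delta_c/2)$ and the measures $\tilde\nu_q$ are the $\exp_p$-lifts of $\nu_{\exp_p q}$ on the same region. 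Hence the one-step Finsler geodesic of $(T_pM,F_p)$ projects exactly to the one-step geodesic of $(M,F)$, and an induction over jump times gives $\xi^N_s = \exp_p(\xi^{N,p}_s)$ up to the first exit of $\xi^{N,p}$ from $D_p(\delta_c/2)$.

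Next I would show the event inclusion $\{\tau^{N,\delta}_p>t\}\subseteq\{\tau^{N,\delta}>t\}$. On the former event, $d^p(0,\xi^{N,p}_s)\leq\delta$ for all $s\in[0,t]$. Since $\delta<\delta_0\leq\delta_c/(4(C+1))$ is well below the injectivity radius $\geq\delta_c/4$ of $F_p$ at $0$ (Proposition~\ref{prop:Morse}), radial geodesics minimize, so $F_p(\xi^{N,p}_s)=d_a^p(0,\xi^{N,p}_s)\leq\delta$ and thus $\xi^{N,p}_s\in D_p(\delta)$. The hypothesis $2(C+1)/\sqrt N<\delta_c/4$, which bounds a single jump in $F_p$-length, then keeps even the post-jump position inside $D_p(\delta_c/2)$, so the coupling is valid on $[0,t]$. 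Because $\exp_p$ provides a length-preserving bijection between paths in $D_p(\delta_c/2)$ and their images, the asymmetric distances agree, yielding $d(p,\xi^N_s)=d^p(0,\xi^{N,p}_s)\leq\delta$ on $[0,t]$, hence $\tau^{N,\delta}>t$. Complementing and taking probabilities then gives the claim.

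The main obstacle is the clean justification of the exact equality $d(p,\exp_p Y)=d^p(0,Y)$ for $Y$ in the $\delta$-ball. The forward distance $d_a^p(0,Y)$ is realized by the radial geodesic, which is minimizing within the injectivity radius of $F_p$ at $0$; the same holds on the manifold side. The reverse distance $d_a^p(Y,0)$, however, is realized by some Finsler geodesic from $Y$ back to $0$, which must be shown to stay inside the isometric region $D_p(\delta_c/2)$ in order to correspond (via $\exp_p$) to a minimizer in $M$. Here both of the lemma's hypotheses are used: uniform ellipticity (Definition~\ref{definition:bounded geometry}(\ref{condition:uniform elliptic})) gives $d_a^p(Y,0)\leq C\, d_a^p(0,Y)\leq C\delta$, and the triangle inequality confines this return path to the set of points with $d_a^p(0,\cdot)\leq(1+C)\delta<\delta_c/4$, hence inside $D_p(\delta_c/4)\subset D_p(\delta_c/2)$; the same reasoning on the manifold side closes the identification of symmetrized distances.
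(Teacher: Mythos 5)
Your coupling argument is sound and reaches the same conclusion as the paper by a genuinely different (pathwise rather than analytic) route. The paper reduces to the discrete chains and compares, by induction over steps $k$, the killed sub-probability measures $\theta^0_k$ (law of the lifted chain killed on exiting $B^p_0(\delta)$) and $\theta_k$ (law of the chain on $M$ killed on exiting $B_p(\delta)$), proving $(\exp_p)_*\theta^0_k\leq\theta_k$ via the one-step inequality $P^o(q,\hat E)\leq P(\exp_p(q),\exp_p(\hat E))$; the inequality there is exactly the analytic shadow of the fact that $\exp_p$ need not be injective. Your version replaces this induction by a realization of both chains on one probability space with $\xi^N_s=\exp_p(\xi^{N,p}_s)$ up to the exit of the lift from the good region, followed by the event inclusion $\{\tau^{N,\delta}_p>t\}\subseteq\{\tau^{N,\delta}>t\}$. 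Both proofs rest on the same two geometric inputs: the commutation relation $\exp_p\circ\mathbf{e}^N_q=\mathbf{e}^N_{\exp_p(q)}\circ(\di\exp_p(q))_*$ (valid under $\tfrac{2(C+1)}{\sqrt N}<\tfrac{\delta_c}{4}$) and the containment $B^p_0(\delta)\subset D_p(\delta_c/2)$. The coupling is arguably more transparent; the paper's kernel comparison avoids having to say anything about what happens to the coupled process after the lift leaves the isometric region.

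One correction is needed in your third paragraph. The equality $d(p,\exp_p Y)=d^p(0,Y)$ that you try to establish there is in general \emph{false}, and your argument for it cannot be repaired: confining the $T_pM$-side minimizers to $D_p(\delta_c/4)$ does not let you lift an $M$-side minimizer back to $T_pM$, because $\exp_p$ is only an immersion and the injectivity radius of $M$ at $p$ may be far smaller than $\delta_c/4$ (this non-injectivity is precisely why the authors pass to $T_pM$ in the first place, and why their Lemma is an inequality rather than an equality of exit probabilities). Fortunately your proof never needs the equality: for the inclusion $\{\tau^{N,\delta}_p>t\}\subseteq\{\tau^{N,\delta}>t\}$ it suffices that $d(p,\exp_p Y)\leq d^p(0,Y)$ for $Y\in B^p_0(\delta)$, and this one-sided bound follows directly from the fact that $\exp_p$ sends any (near-)minimizing path in $(T_pM,F_p)$ from $0$ to $Y$, and from $Y$ to $0$, to a path of the same $F$-length in $M$ — your confinement argument via uniform ellipticity is exactly what guarantees those minimizers stay where $F_p=(\exp_p)^*F$. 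So replace the claimed equality by the inequality and the proof closes.
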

\begin{proof}
The geodesic random walks $\xi^{N,p}$ and $\xi^N$ are constructed by randomizing the time of the discrete Markov processes $\zeta^{N,p}$ and $\zeta^N$ using a Poisson process, respectively. 
% Without loss of generality, let us assume $\zeta^N$ and $\zeta^{N,p}$ start at $p$ and $0$, respectively. 
Hence it suffices to show for each pair $(N,\delta)$ satisfies the condition in the lemma, the following holds
\begin{align}
\label{eqn:walk measure comparison}
\mathbf{P}_0\left(\max_{j\leq k}d^p\left(0,\zeta_j^{N,p} \right) \leq \delta \right)\leq \mathbf{P}_p\left( \max_{j\leq k} d(p,\zeta^N_j )\leq \delta \right)  ,\ \forall p\in M,\ \forall\delta<\delta_0,\ k\geq 0.
\end{align}

For $r>0$, define the closed $\delta$-balls of the symmetrized distances on $T_pM$ and $M$ respectively:
\begin{align*}
B_0^p(r)&= \lbrace y\in T_pM\colon d^p(0,y)\leq  r\rbrace,\\
B_p(r)& =\lbrace q\in M\colon d(p,q)\leq r \rbrace.
\end{align*}
Because $\delta<\delta_0<\frac{\delta_c}{4(C+1)}$, we have $B^p_0(\delta)\subset D_p(\delta_c/2)$. Hence $\exp_p$ maps $(B^p_0(\delta),F_p)$ inside $(B_p(\delta),F)$ by an isometric immersion. Now for each $k\geq 0$ define the following Borel sub-probabilty measures on $B^p_0(\delta)$ and $B_p(\delta)$, respectively.
\begin{align*}
\theta^0_k(\hat{E})& 
=\mathbf{P}_0\left(\zeta^{N,p}_{j}\in B^p_0(\delta),\ 1\leq j\leq k-1,\ \zeta^{N,p}_k\in \hat{E} \right),\ \forall \hat{E}\in \mathcal{B}(B^p_0(\delta));\\
\theta_k(E)& =\mathbf{P}_p\left(\zeta^N_{j}\in B_p(\delta),\ 1\leq j\leq k-1,\ \zeta^N_k\in E \right),\ \forall E\in \mathcal{B}(B_p(\delta)).
\end{align*}
Let $\theta^{p}_k\coloneqq \left(\exp_p\vert_{B^p_0(\delta)}\right)_*\theta^0_k$. 
For integers $N$ such that $\frac{2(C+1)}{\sqrt{N}}<\frac{\delta_c}{4}$, we claim $\theta^p_k\leq \theta_k$ for all $k\geq 0$.

We prove the claim by induction. As for $k=0$, we have $\theta^0_0(\hat{E})=\mathbf{1}_{\hat{E}}(0)$ 
and $\theta_0(E)=\mathbf{1}_E(p)$. Since $\exp_p(0)=p$,
 this claim holds for $k=0$. 
 For simplicity we denote
 \begin{align}
\mathbf{e}_p^N\coloneqq \exp_p\Big(\frac{Y-\mu_p}{\sqrt{N}}+\frac{1}{N}\mu_p\Big).
\end{align}
 For any $q\in B^p_0(\delta)$ and $Y\in T_q(T_pM)$ with $F_p(Y)\leq 1$, the condition 
 $\frac{2(C+1)}{\sqrt{N}}<\frac{\delta_c}{4}$ implies the $N$-scaled geodesic segment $\gamma(t)=\exp_q(t\mathbf{e}^N_q(Y))$ for $t\in [0,1]$ of $F_p$ is 
 mapped by $\exp_p$ to a geodesic segment on $(M,F)$. Also note for $q\in B^p_0(\delta)$, the mean is preserved under $\exp_p$ by
 \begin{align*}
 (\di \exp_p(q))_*(\tilde{\mu}_q)=\mu_{\exp_p(q)}.
 \end{align*}
 Hence for any $q\in B^p_0(\delta)$
 and $Y\in T_q(T_pM)$ with $F_p(Y)\leq 1$ we get
\begin{align}
\label{eqn:measure commutative}
\exp_p \left( \mathbf{e}^N_q(Y)\right)=\mathbf{e}^N_{\exp_p(q)}\left( (\di \exp_p(q))_*(Y) \right).
\end{align}

For each $N\geq 1$, let $P^o(x,\cdot)$ and $P(y,\cdot)$ be the one-step transition probabilities of $\zeta^{N,p}$ and $\zeta^N$, respectively. For $q\in B^p_0(\delta)$, set $q_1=\exp_p(q)$.  Since $\tilde{\nu}_q$ is the pull-back of $\nu_{q_1}$ by $(d\exp_p(q))$, then \eqref{eqn:measure commutative} implies for any Borel set $\hat{E}\subset B^p_0(\delta)$, we have
\begin{align}
P^o(q,\hat{E})=\tilde{\nu}_q\left(\left(\mathbf{e}^N_q\right)^{-1}(\hat{E})\right)\leq \nu_{q_1}\left(\left(\mathbf{e}^N_{q_1}\right)^{-1}(\exp_p(\hat{E}))\right)=P(q_1,\exp_p(\hat{E})).
\end{align}
The inequality in the previous formula appears because the exponential map $\exp_p$ restricted to 
$B^p_0(\delta)$ is not necessarily injective.

In particular, for any Borel $E\subset B_p(\delta)$, we get
\begin{align}
\label{e:PP}
P(q_1,E)\geq P^o(q,(\exp_p)^{-1}(E)\cap B^p_0(\delta)).
\end{align}
 By the Markov property, we have for all $k\geq 1$
\begin{align*}
\theta^o_k(\hat E)&=\int_{B^p_0(\delta)} P^o(y,\hat{E})\theta^o_{k-1}(\di y),\ \forall \hat{E}\in \mathcal{B}(B^p_0(\delta));\\
\theta_k(E)& =\int_{B_p(\delta)} P(x,E)\theta_{k-1}(\di x),\ \forall E\in \mathcal{B}(B_p(\delta)).
\end{align*}
Hence we have the following chain of inequalities:
\begin{align*}
\theta_k(E)& =\int_{B_p(\delta)} P(x,E)\theta_{k-1}(\di x)\\
& \geq \int_{B_p(\delta)} P(x,E)\theta^p_{k-1} (\di x)\\
&= \int_{B^p_0(\delta)} P(\exp_p(y),E)\theta^o_{k-1}(\di y)\\
& \geq \int_{B^p_0(\delta)} P^o(y,(\exp_p)^{-1}(E)\cap B^p_0(\delta))\theta^o_{k-1}(\di y)\\
&=\theta^p_k(E).
\end{align*}
where the first inequality comes for the induction assumption, and the second inequality is due to \eqref{e:PP}.

In particular, for all $k\geq 0$, the inequality
\begin{align*}
\theta^o_k(B^p_0(\delta))= \theta^p_k(B_p(\delta))\leq \theta_k(B_p(\delta))
\end{align*}
implies \eqref{eqn:walk measure comparison} holds. This completes the proof.
\end{proof}
Next, we prove the following estimate on the first exit times of $\xi^N$ from $\delta$-balls on $M$.
\begin{lem}
\label{lemma:operatorlocalness}
For each $\delta>0$ there is $C(\delta)>0$ such that for all $t\geq 0$
\begin{align}
\label{eqn:exist time bound 1}
\sup_{p\in M}\sup_{N\geq 1}\mathbf{P}_p(\tau^{N,\delta}\leq t)\leq C(\delta)t.
\end{align}
In particular,
\begin{align}
\label{eqn:exit time bound 2}
\sup_{p\in M}\sup_{N\geq 1}\mathbf{E}_p\ex^{-\tau^{N,\delta}} <1.
\end{align} 
\end{lem}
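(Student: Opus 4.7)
My plan is to combine the exit-time comparison of Lemma \ref{lemma:exit time comparison} with a Dynkin-formula argument driven by the bump functions $f_p^{\delta'}$ from Lemma \ref{lemma:generator bounds}, after disposing of the small-$N$ regime by an elementary count of jumps. Since $\xi^N$ is pseudo-Poisson with jump rate $N$ and is constant at $p$ between jumps, $\tau^{N,\delta}$ dominates the first jump time, giving $\mathbf P_p(\tau^{N,\delta}\le t)\le 1-\mathrm e^{-Nt}\le Nt$. Fix $N_0 := \lceil (8(C+1)/\delta_c)^2\rceil+1$ so that the hypothesis $\tfrac{2(C+1)}{\sqrt N}<\tfrac{\delta_c}{4}$ of Lemma \ref{lemma:exit time comparison} holds for every $N\ge N_0$; for $N<N_0$ the elementary bound already gives the linear estimate $N_0\,t$, so only $N\ge N_0$ needs further treatment.

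For $N\ge N_0$ and $\delta$ small enough that $\delta/C<\delta_0$, Lemma \ref{lemma:exit time comparison} yields $\mathbf P_p(\tau^{N,\delta}\le t)\le \mathbf P_0(\tau^{N,\delta}_p\le t)$, reducing the problem to the lifted walk $\xi^{N,p}$ on $(T_pM, F_p)$. The crucial geometric point, which I expect to be the main obstacle, is to match the symmetric and asymmetric balls via uniform ellipticity: setting $\delta':=\delta/C\in(0,\delta_0)$, the analogue of \eqref{eqn:symmetry coefficient} on $(T_pM,F_p)$ gives $d_a^p(0,q)\ge d^p(0,q)/C$, so that whenever $d^p(0,\xi^{N,p}_{\tau^{N,\delta}_p})>\delta$ we automatically have $d_a^p(0,\xi^{N,p}_{\tau^{N,\delta}_p})>\delta/C=\delta'>\delta'/2$, which forces $f_p^{\delta'}$ to vanish at the exit point.

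Dynkin's formula applied to the smooth bounded function $f_p^{\delta'}$ (which lies in the domain of the bounded generator $A_{N,p}$ of the Feller process $\xi^{N,p}$) at the bounded stopping time $t\wedge\tau^{N,\delta}_p$ then reads
\[
1=f_p^{\delta'}(0)=\mathbf E_0 f_p^{\delta'}\bigl(\xi^{N,p}_{t\wedge\tau^{N,\delta}_p}\bigr)-\mathbf E_0\int_0^{t\wedge\tau^{N,\delta}_p}A_{N,p}f_p^{\delta'}(\xi^{N,p}_s)\,\di s.
\]
Splitting the first expectation over $\{\tau^{N,\delta}_p\le t\}$ (where the integrand equals $0$ by the previous observation) and its complement (where it is at most $1$), and using $|A_{N,p}f_p^{\delta'}|\le\tilde C(\delta')$ from Lemma \ref{lemma:generator bounds}, I obtain $1\le \mathbf P_0(\tau^{N,\delta}_p>t)+\tilde C(\delta')\,t$, which rearranges to \eqref{eqn:exist time bound 1} with $C(\delta):=\max(\tilde C(\delta/C),N_0)$ for $\delta<C\delta_0$; the range $\delta\ge C\delta_0$ follows from the monotonicity $\tau^{N,\delta_1}\ge\tau^{N,\delta_2}$ whenever $\delta_1\ge\delta_2$, which transfers the bound from any fixed small radius to all larger ones.

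Finally, \eqref{eqn:exit time bound 2} follows from the identity $\mathbf E\mathrm e^{-\tau}=1-\int_0^\infty\mathrm e^{-s}\mathbf P(\tau>s)\,\di s$: by \eqref{eqn:exist time bound 1}, for $s\le 1/(2C(\delta))$ one has $\mathbf P_p(\tau^{N,\delta}>s)\ge 1/2$, so the integral is bounded below by $\tfrac12(1-\mathrm e^{-1/(2C(\delta))})>0$ uniformly in $p$ and $N$, yielding \eqref{eqn:exit time bound 2}.
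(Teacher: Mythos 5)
Your proposal is correct and follows essentially the same route as the paper: the elementary jump-count bound for $N<N_0$, the reduction to the lifted walk via Lemma \ref{lemma:exit time comparison}, the Dynkin-formula argument with the bump functions of Lemma \ref{lemma:generator bounds}, and the same splitting at $t_*=1/(2C(\delta))$ for the Laplace-transform bound. Your use of $\delta'=\delta/C$ to reconcile the symmetric ball defining $\tau^{N,\delta}_p$ with the asymmetric distance defining $f_p^{\delta'}$ is in fact slightly more careful than the paper, which applies $f^{\delta}_p$ directly.
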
 
\begin{proof}
We adapt ideas from \cite{Kunita-95} by Kunita.
The proof is divided into two steps. First we consider the exit times for the lifted random walks $\xi^{N,p}$, 
and we show
\begin{align}
\label{e:ssP}
\sup_{p\in M}\sup_{N\geq 1}\mathbf{P}_0(\tau^{N,\delta}_p\leq t)\leq \tilde{C}(\delta)t,
\end{align}
where $0\in T_pM$ and $\tilde{C}(\delta)>0$ is the constant defined in Lemma \ref{lemma:generator bounds}.
Next we use Lemma \ref{lemma:exit time comparison} and \eqref{e:ssP} to prove \eqref{eqn:exist time bound 1}, and \eqref{eqn:exit time bound 2} directly follows from \eqref{eqn:exist time bound 1}.

It suffices to show \eqref{e:ssP} for sufficiently small $\delta$. For any $\delta\in (0,\delta_0)$ and $p\in M$, let $\lbrace f^{\delta}_p\rbrace$ be the family of functions constructed in Lemma \ref{lemma:generator bounds}. Using $0\leq f^{\delta}_p\leq 1$ and $f^{\delta}_p(0)=1$, we have
\begin{align*}
\mathbf{P}_0(\tau^{N,\delta}_p\leq t)& =1-\mathbf{P}_0(\tau^{N,\delta}_p>t)\\ 
& \leq 1-\mathbf{E}_0\Big[ \mathbb{I}(\tau^{N,\delta}_p>t)f^{\delta}_p(\xi^{N,p}(\tau^{N,\delta}_p\wedge t))\Big], \\
& =1- \mathbf{E}_0\Big[\Big(1- \mathbb{I}(\tau^{N,\delta}_p\leq t)\Big)f^{\delta}_p\Big(\xi^{N,p}(\tau^{N,\delta}_p\wedge t)\Big)\Big]\\
&= f^{\delta}_p(0)-\mathbf{E}_0 f^{\delta}_p\Big(\xi^{N,p}(\tau^{N,\delta}_p\wedge t)\Big)+ \mathbf{E}_0\Big[ \mathbb{I}(\tau^{N,\delta}_p\leq t)f^{\delta}_p\Big(\xi^{N,p}(\tau^{N,\delta}_p\wedge t)\Big)\Big]\\
&= f^{\delta}_p(0)-\mathbf{E}_0 f^{\delta}_p\Big(\xi^{N,p}(\tau^{N,\delta}_p\wedge t)\Big)+ \mathbf{E}_0\Big[ \mathbb{I}(\tau^{N,\delta}_p\leq t)
f^{\delta}_p\Big(\xi^{N,p}(\tau^{N,\delta}_p)\Big)\Big].
\end{align*}
Here the notation  $a\wedge b:=\min\{a,b\}$ is standard in the theory of stochastic processes.  
Taking into account that $f^{\delta}_p\left(\xi^{N,p}\left(\tau^{N,\delta}_p\right)\right)=0$ and 
applying the Dynkin formula to above, we obtain
\begin{align*}
\mathbf{P}_0(\tau^{N,\delta}_p\leq t) \leq -\mathbf{E}_0\int^{\tau^{N,\delta}_p\wedge t}_0 A_{N,p} f^{\delta}_p(\xi^{N,p}_s)\,\di s
 \leq \sup_{N\geq 1}\lVert A_{N,p} f^{\delta}_p\rVert\cdot t= \tilde{C}(\delta)t,\ \forall p\in M.
\end{align*}
Let $N_0$ be the smallest positive integer such that $\dfrac{2(C+1)}{\sqrt{N_0}}< \dfrac{\delta_c}{4}$.  
For $N\leq N_0$, we have always have
\begin{align}
\mathbf{P}_p(\tau^{N,\delta}\leq t)\leq \mathbf{P}(Q(Nt)>0)\leq Nt\leq N_0t.
\end{align}  
This together with Lemma \ref{lemma:exit time comparison} proves the inequality \eqref{eqn:exist time bound 1} by setting
$C(\delta)=\max\{\tilde C(\delta),N_0\}$.

Furthermore, for $t_*=\dfrac{1}{2C(\delta)}>0$
\begin{align*}
\mathbf{E}_p\ex^{-\tau^{N,\delta}}& 
=\mathbf{E}_p\mathbb{I}(\tau^{N,\delta}\leq t_*)\ex^{-\tau^{N,\delta}}+\mathbf{E}_p\mathbb{I}(\tau^{N,\delta}> t_*)\ex^{-\tau^{N,\delta}}\\
& \leq \mathbf{P}_p(\tau^{N,\delta}\leq t_*)+\ex^{-t_*}\left( 1-\mathbf{P}_p(\tau^{N,\delta}\leq t_*) \right)\\
& = \ex^{-t_*}+(1-\ex^{-t_*})\mathbf{P}_p(\tau^{N,\delta}\leq t_*).
\end{align*}
Note that due to \eqref{eqn:exist time bound 1}, we have $\mathbf{P}_p(\tau^{N,\delta}\leq t_*)\leq C(\delta)t_*\leq \dfrac{1}{2}$. Hence we obtain
\begin{align*}
\mathbf{E}_p\ex^{-\tau^{N,\delta}}\leq \ex^{-t_*}+\dfrac{1-\ex^{-t_*}}{2}=\dfrac{1+\ex^{-t_*}}{2}<1.
\end{align*}
This proves the second inequality of the lemma.
\end{proof}
Now we are ready to show the Aldous criteria hold in our situation.

\begin{lem}[Aldous criteria]
\label{l:AC}
For any initial point $p\in M$, any $T>0$, $\delta>0$, and any $(\mathcal{F}^N)$-stopping times $0\leq \tau\leq T$, we have
\begin{align}
\label{eqn:Aldous}
\lim_{s\rightarrow 0}\limsup_{N\rightarrow \infty}\sup_{\tau}\sup_{h\in[0,s]}\mathbf{P}_p\left( d(\xi^N_{\tau},\xi^N_{\tau+h})>\delta \right)=0
\end{align}
\end{lem}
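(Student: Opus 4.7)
The plan is to reduce the Aldous criterion to the uniform first-exit bound already established in Lemma \ref{lemma:operatorlocalness}, using the strong Markov property of the Feller process $\xi^N$ (Proposition \ref{prop:N-walk generator}). The monotonicity ``large displacement at time $h$ implies early exit from a $\delta$-ball'' will turn the one-sided time marginal bound into the two-time-point bound required by Aldous.

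First I would define, for fixed $N$, $\delta$, and $h$, the auxiliary function
\begin{equation*}
\Psi_{N,h,\delta}(q)\;:=\;\mathbf P_q\bigl(d(q,\xi^N_h)>\delta\bigr),\qquad q\in M.
\end{equation*}
Since the sample paths of the pseudo-Poisson process $\xi^N$ are right-continuous and jump only at the Poisson epochs, the event $\{d(q,\xi^N_h)>\delta\}$ forces the walker, started at $q$, to have jumped out of the symmetrized $\delta$-ball $B_q(\delta)$ at some time not exceeding $h$. Therefore
\begin{equation*}
\Psi_{N,h,\delta}(q)\;\le\;\mathbf P_q\bigl(\tau^{N,\delta}\le h\bigr)\;\le\;C(\delta)\,h,
\end{equation*}
where the last step is exactly Lemma \ref{lemma:operatorlocalness} applied at the starting point $q$ (the constant $C(\delta)$ is uniform in $q$ and in $N\ge 1$).

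Next, since $\xi^N$ is Feller and time-homogeneous, it satisfies the strong Markov property. Conditioning on $\mathcal F^N_\tau$ at the bounded stopping time $\tau\le T$ gives
\begin{equation*}
\mathbf P_p\bigl(d(\xi^N_\tau,\xi^N_{\tau+h})>\delta\bigr)
=\mathbf E_p\bigl[\Psi_{N,h,\delta}(\xi^N_\tau)\bigr]\;\le\;C(\delta)\,h\;\le\;C(\delta)\,s,
\end{equation*}
uniformly in $\tau$, $N\ge 1$ and $h\in[0,s]$. Taking $\limsup_{N\to\infty}$ and then $s\to 0$ yields \eqref{eqn:Aldous}.

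I expect no genuine obstacle: all the analytic work is packaged into Lemma \ref{lemma:operatorlocalness}, which is where the bounded geometry of $(M,F)$ (via the Hessian comparison from \cite[\S15.1]{shen2001lectures}) really entered. The only mild subtlety is the justification of the implication $\{d(q,\xi^N_h)>\delta\}\subseteq\{\tau^{N,\delta}\le h\}$; this uses that $d$ is the symmetrized distance (so the $\delta$-ball is genuinely two-sided) together with right-continuity of sample paths, ensuring that the first time the path leaves the closed $\delta$-ball is well defined and bounded by $h$ on that event.
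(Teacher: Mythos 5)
Your proposal is correct and follows essentially the same route as the paper: condition on $\mathcal{F}^N_\tau$, use the strong Markov property to reduce to the one-point bound $\mathbf{P}_q(d(q,\xi^N_h)>\delta)\le \mathbf{P}_q(\tau^{N,\delta}\le h)$, and invoke the uniform exit-time estimate of Lemma~\ref{lemma:operatorlocalness}. The only cosmetic difference is that you make the inclusion $\{d(q,\xi^N_h)>\delta\}\subseteq\{\tau^{N,\delta}\le h\}$ and the function $\Psi_{N,h,\delta}$ explicit, whereas the paper passes directly to $\sup_{q\in M}\mathbf{P}_q(\tau^{N,\delta}\le h)$.
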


\begin{proof}
Let $\delta,s>0$ be fixed. For each $N\geq 1$, $p\in M$, $h\in[0,s]$ and a stopping time $\tau$ we have
\begin{align*}
\mathbf{P}_p\left(d(\xi^N_{\tau},\xi^N_{\tau+h})>\delta\right)& =\mathbf{E}_p\biggr [ \mathbb{I}\left(d(\xi^N_{\tau},\xi^N_{\tau+h})>\delta)\right)\biggr ],\\
& =\mathbf{E}_p\biggr [\mathbf{E}\biggr [\mathbb{I}\left(d(\xi^N_{\tau},\xi^N_{\tau+h})>\delta)\right)\vert \mathcal{F}^N_{\tau}\biggr ]\biggr ],\\
& =\mathbf{E}_p\biggr [ \mathbf{P}\left( d(\xi^N_{\tau},\xi^N_{\tau+h}) >\delta\vert \mathcal{F}^N_{\tau}\right)\biggr ].
\end{align*}
The strong Markov property of $\xi^N$ yields
\begin{align*}
\mathbf{E}_p\biggr [ \mathbf{P}\left( d(\xi^N_{\tau},\xi^N_{\tau+h}) >\delta\vert \mathcal{F}^N_{\tau}\right)\biggr ]
=\mathbf{E}_p\mathbf{P}_{\xi^N_{\tau}}\biggr (d(\xi^N_0,\xi^N_h)>\delta \biggr )
\leq \mathbf{E}_p\Big[\sup_{q\in M} \mathbf P_q(\tau^{N,\delta}\leq h)\Big].
\end{align*}
Thus by Lemma \ref{lemma:operatorlocalness}, we have
\begin{align}
\mathbf{P}_p\left(d(\xi^N_{\tau},\xi^N_{\tau+h})>\delta\right)\leq C(\delta)h\leq C(\delta)s.
\end{align}
Taking supremums  and letting $s\rightarrow 0$, we obtain the limit in Equation \eqref{eqn:Aldous}.
\end{proof}

Next we show the family of processes $\lbrace \xi^N\rbrace$ has compact containment property as follows.

\begin{lem}[compact containment condition]
\label{l:cc}
For any $\varepsilon>0$, $T\geq 0$ and $p\in M$ there is a compact neighborhood $K_{\varepsilon}(p)\subseteq M$ of $p$ such that
\begin{align}
\inf_N\mathbf{P}_p\Big(\xi^N_t\in K_{\varepsilon}(p),\ t\in [0,T]\Big)\geq 1-\varepsilon.
\end{align}
\end{lem}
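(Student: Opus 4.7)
My plan is to use the exit time estimate \eqref{eqn:exit time bound 2} together with the strong Markov property to iterate over many small ball exits and conclude that the walks $\xi^N$ cannot travel too far on the time interval $[0,T]$ with high probability, uniformly in $N$.

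Concretely, I would fix some $\delta\in(0,\delta_0)$ and set
\begin{align*}
q:=\sup_{p\in M}\sup_{N\geq 1}\mathbf{E}_p \ex^{-\tau^{N,\delta}}<1,
\end{align*}
which is finite and strictly less than $1$ by Lemma~\ref{lemma:operatorlocalness}. Next I would define iterated ``local exit times'' by $\sigma_0^N:=0$ and
\begin{align*}
\sigma_{k+1}^N:=\inf\{t>\sigma_k^N:\, d(\xi^N_{\sigma_k^N},\xi^N_t)>\delta\},\quad k\geq 0.
\end{align*}
By the strong Markov property applied at the stopping time $\sigma_k^N$, conditionally on $\mathcal{F}^N_{\sigma_k^N}$ the increment $\sigma_{k+1}^N-\sigma_k^N$ has the same law as $\tau^{N,\delta}$ for a walk starting at $\xi^N_{\sigma_k^N}$. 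Hence
\begin{align*}
\mathbf{E}_p\ex^{-\sigma_{k+1}^N}=\mathbf{E}_p\bigl[\ex^{-\sigma_k^N}\,\mathbf{E}_{\xi^N_{\sigma_k^N}}\ex^{-\tau^{N,\delta}}\bigr]\leq q\cdot \mathbf{E}_p\ex^{-\sigma_k^N},
\end{align*}
so by induction $\mathbf{E}_p\ex^{-\sigma_k^N}\leq q^k$, and Chebyshev's inequality gives $\mathbf{P}_p(\sigma_K^N\leq T)\leq \ex^T q^K$ for every integer $K\geq 1$.

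Now I would translate this time estimate into a spatial one. On the event $\{\sigma_K^N>T\}$ we have by construction $d(\xi^N_{\sigma_k^N},\xi^N_t)\leq \delta$ for every $t\in[\sigma_k^N,\sigma_{k+1}^N)$ and every $k<K$. Moreover, each jump of $\xi^N$ has symmetrized $d$-size bounded by a universal constant $\eta$: indeed the single geodesic step has $F$-length at most $F\bigl((Y-\mu_p)/\sqrt{N}+\mu_p/N\bigr)\leq (F(Y)+F(-\mu_p))/\sqrt{N}+F(\mu_p)/N\leq 1+C+1$, using Hypothesis $\mathbf{H}_\nu$ and uniform ellipticity \eqref{eqn:symmetry coefficient}. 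A repeated application of the triangle inequality for $d$ then yields $\sup_{t\in[0,T]}d(p,\xi^N_t)\leq K(\delta+\eta)$ on $\{\sigma_K^N>T\}$. Choosing $K=K(\varepsilon,T)$ so that $\ex^T q^K<\varepsilon$ and setting $R:=K(\delta+\eta)$, I would take $K_\varepsilon(p):=\overline{B_p(R)}$. This set is compact: it is closed in $M$ and contained in the closed forward ball $\overline{B^+_p(R)}$, which is compact by the Finslerian Hopf--Rinow theorem (Section~\ref{sec:1.2.1}) under $\mathbf{H}_c$.

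The only delicate point I anticipate is the bookkeeping of the jumps of the cadlag process $\xi^N$ at the stopping times $\sigma_k^N$: the walk may overshoot the $\delta$-ball, and the uniform bound on jump sizes is what makes the argument work for \emph{all} $N\geq 1$ simultaneously rather than just for $N$ large. Aside from this, the proof is a clean iteration of the single exit time estimate, and no additional Finsler-geometric input beyond compactness of forward balls is required.
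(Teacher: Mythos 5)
Your proposal is correct and follows essentially the same route as the paper: iterate the uniform exit-time bound of Lemma~\ref{lemma:operatorlocalness} via the strong Markov property to get $\mathbf{E}_p\ex^{-\sigma_k^N}\leq q^k$, convert this to $\mathbf{P}_p(\sigma_K^N\leq T)\leq \ex^Tq^K$ by the exponential Markov inequality, control the overshoot by the uniform jump-size bound, and invoke Hopf--Rinow for compactness of the resulting ball (the paper uses radius-$1$ balls where you use radius $\delta$, which is immaterial). The only slip is that your bound $\eta\leq C+2$ controls the forward length $d_a(p,\zeta_1^N)$ only; for the symmetrized distance $d$ you need the extra factor $C$ from \eqref{eqn:symmetry coefficient}, giving the paper's constant $C(C+1)$ — a purely cosmetic correction.
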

\begin{proof}
Let us define the following sequence of exit times
\begin{align*}
\tau^N_0\ & \coloneqq 0,\\
\tau^N_k & \coloneqq \inf\left\lbrace s>\tau^N_{k-1}\colon d\left(\xi^N_s,\xi^N_{\tau^N_{k-1}}\right)> 1\right\rbrace,\ k\geq 1,
\end{align*}
(as usual, we set $\inf\emptyset=+\infty$).

By Lemma \ref{lemma:operatorlocalness}, there exists some constant $c_1\in(0,1)$ such that
\begin{align*}
\sup_{p\in M}\sup_N\mathbf{E}_p \ex^{-\tau^N_1}=\sup_{p\in M}\sup_N\mathbf{E}_p \ex^{-\tau^{N,\delta_1}}\leq c_1<1
\end{align*}
Then for $k\geq 1$ and $\forall N\geq 1$, the strong Markov property yields
\begin{align*}
\mathbf{E}_p\ex^{-\tau^N_k}& = \mathbf{E}_p \big [\ex^{-\tau^N_{k-1}}\cdot \ex^{\tau^N_{k-1}-\tau^N_k}\big ],\\
& = \mathbf{E}_p \biggr [\ex^{-\tau^N_{k-1}}\cdot \mathbf{E}\big [\ex^{\tau^N_{k-1}-\tau^N_k}\vert \mathcal{F}^N_{\tau^N_{k-1}}\big ]\biggr ],\\
& =  \mathbf{E}_p \biggr [\ex^{-\tau^N_{k-1}} \cdot \mathbf{E}_{\xi^N_{\tau^N_{k-1}}} \ex^{-\tau^N_1}\biggr ],\\
& \leq c_1\cdot \mathbf{E}_p\ex^{-\tau^N_{k-1}}\leq c_1^k.
\end{align*}
For any $\varepsilon>0$ and $T\geq 0$, define
\begin{align}
\label{eqn:compact containment integer}
k_{\varepsilon}\coloneqq \left\lceil\dfrac{\ln{\varepsilon}-T}{\ln{c_1}}\right\rceil.
\end{align}
Then the exponential Markov inequality gives $\forall p\in M$, $\forall N\geq 1$:
\begin{align}
\mathbf{P}_p\left( \tau^N_{k_{\varepsilon}}\leq T\right)=\mathbf{P}_p \left(\ex^{-\tau^N_{k_{\varepsilon}}}\geq \ex^{-T} \right)
\leq \ex^{T}\mathbf{E}_p\ex^{-\tau^N_{k_{\varepsilon}}}\leq \ex^{T} c_1^{k_{\varepsilon}}\leq \varepsilon.
\end{align}
By construction and the triangle inequality we have that for each $k\geq 1$, each $N\geq 1$
\begin{align}
 &d\left(\xi^N_{\tau^N_{k}},\xi^N_{\tau^N_{k-1}}\right)\leq 1+ \sup_N\sup_p d(p,\zeta^N_1).
\end{align}
We estimate the last term:
\begin{align}
 d(p,\zeta^N_1)&\leq \sup_{Y\in D_pM}d\Big(p,\exp_p\Big(\frac{Y-\mu_p}{\sqrt N}+\frac{\mu_p}{N}\Big)\Big)\\
 &\leq \sup_{Y\in D_pM}\dfrac{C}{\sqrt{N}}{}\cdot F\Big(Y-\Big(1-\frac{1}{\sqrt{N}}\Big)\mu_{p}\Big)\\
 & \leq C(C+1).
\end{align}
Thus we have for $k\geq 1$ 
\begin{align}
d\left(\xi^N_0,\xi^N_{\tau^N_k}\right)\leq k\Big(1+C(C+1)\Big). %  ,\quad \mathrm{if}\ \tau^N_k<\infty.
\end{align}

Now for $p\in M$, $\varepsilon>0$, and $k_{\varepsilon}$ defined as in Equation 
\eqref{eqn:compact containment integer}, consider the closed ball
\begin{align}
\label{e:K}
K_p(\varepsilon)\coloneqq \lbrace q\in M \colon d(p,q)\leq  R(\varepsilon,T)\rbrace,
\end{align}
with radius 
\ba
\label{e:R}
R(\varepsilon,T)=k_{\varepsilon}(1+C(C+1))+1.
\ea
Then $K_p(\varepsilon)$ is closed and forward bounded, hence it is compact by Hopf--Rinow theorem.

Eventually we get that $\forall p\in M$ and $N\geq 1$:
\begin{equation}
\begin{aligned}
\mathbf{P}_p&\left( \xi^N_t\notin K_p(\varepsilon)\ \mathrm{for\ some\ } t\leq T \right)\\
& \leq \mathbf{P}_p\left( \tau^N_{k_{\varepsilon}}\leq T \right)+ \mathbf{P}_p\left( \xi^N_t\notin K_p(\varepsilon)\ \mathrm{for\ some\ } t\leq T,\ \tau^N_{k_{\varepsilon}}> T \right)\\
% & \leq \varepsilon +0\\
& \leq \varepsilon,
\end{aligned}
\end{equation}
since the last summand equals to zero by construction of the set $K_p(\varepsilon)$. 
This finishes the proof of compact containment condition.
\end{proof}
So far, we have proved the sequence $\lbrace \xi^N\rbrace$ satisfies both Aldous criteria and the compact containment condition. Thus this sequence is tight. It is well known tightness implies being relatively compact. Thus any subsequence of $\lbrace \xi^N\rbrace$ has a further subsequence converging weakly to some process $\xi$ on $M$.

We close this section by showing any limit process of $\lbrace \xi^N\rbrace$ has continuous paths almost surely.

\begin{prp}
\label{prop:continuous path}
Any limit point $\xi$ of geodesic random walks $\lbrace \xi^N\rbrace$ is a.s.\ continuous.
\end{prp}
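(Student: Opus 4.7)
The plan is to exploit the fact that each pseudo-Poisson process $\xi^N$ jumps only at the epochs of the subordinating Poisson process, and that because of the $1/\sqrt N$ rescaling the size of every individual jump is bounded deterministically by a quantity tending to zero. Once this is in place, continuity of every weak limit follows from a standard criterion in Skorohod space.

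First I would observe that, by the very construction $\xi^N_t=\zeta^N_{Q_{Nt}}$, each path of $\xi^N$ is piecewise constant and jumps from $\zeta^N_k$ to $\zeta^N_{k+1}=\exp_{\zeta^N_k}(Y^N_{k+1})$ at the Poisson epochs. Combining the Finsler triangle inequality with the bound $F(\mu_p)\leq 1$ (valid because $\mu_p$ is the mean of a probability measure supported in $D_pM$) and the uniform ellipticity estimate $F(-v)\leq C F(v)$ from \textbf{H}$_b$, the same short computation used in the proof of Lemma \ref{l:cc} gives the deterministic bound
\begin{equation*}
\sup_{p\in M}\sup_{Y\in D_pM}\, F\Big(\tfrac{Y-\mu_p}{\sqrt N}+\tfrac{\mu_p}{N}\Big)\leq \frac{C+1}{\sqrt N},
\end{equation*}
and consequently, using $d(p,\exp_p(v))\leq C F(v)$,
\begin{equation*}
\sup_{t\geq 0} d(\xi^N_t,\xi^N_{t-})\;\leq\;\sup_{k\geq 0}\, d(\zeta^N_k,\zeta^N_{k+1})\;\leq\;\frac{C(C+1)}{\sqrt N}\xrightarrow[N\to\infty]{}0.
\end{equation*}

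Second, I would invoke the standard principle that if a sequence in $D([0,\infty),M)$ converges weakly and the supremum of its jump sizes on every bounded time interval tends to zero in probability, then the limit process takes values in $C([0,\infty),M)$ almost surely; this is a classical fact about the Skorohod topology (see, e.g., Theorem~3.10.2 in Ethier--Kurtz, or Proposition~VI.3.26 in Jacod--Shiryaev). Applied along any subsequence $(N_k)$ for which $\xi^{N_k}\Rightarrow\xi$, the uniform deterministic bound displayed above trivially verifies the hypothesis, so $\xi$ has continuous sample paths almost surely.

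I do not anticipate any genuine obstacle: the only ingredient beyond the already established weak convergence is the elementary uniform-in-$p$ control of one-step jump sizes, which is a direct consequence of the Finsler triangle inequality together with the uniform ellipticity part of the bounded-geometry hypothesis \textbf{H}$_b$.
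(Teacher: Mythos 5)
Your argument is essentially identical to the paper's proof: the authors likewise bound the jump sizes deterministically by a constant times $\tfrac{C+1}{\sqrt N}$ using the uniform ellipticity condition and then conclude by Theorem 3.10.2 of Ethier--Kurtz. Your slightly larger constant $C(C+1)$ (accounting for the symmetrization of the distance) is harmless, and the proof is correct.
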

\begin{proof}
The uniform elliptic condition implies that the jump sizes of the geodesic random walks $\xi^N$ converge to zero uniformly as $N\to\infty$,  since
\ba
d(\xi^N_{t-},\xi^N_t)\leq \frac{C+1}{\sqrt{N}},\ \forall t\in[0,\infty).
\ea
Hence the statement follows immediately from Theorem 3.10.2 of \cite{EthierK-86}.
\end{proof}

\subsection{Convergence of geodesic random walks.}
In this section, we give the proof of Theorem~\ref{thm:convergence main theorem}. We already know the sequence $\lbrace \xi^N\rbrace$ is relatively compact. To show the weak convergence, it remains to prove all limit points of $\lbrace \xi^N\rbrace$ have the same law. This is achieved by showing any limit point of this sequence is a solution to a well-posed martingale problem.

We first need the following lemma. Recall that $A$ defined in  \eqref{eqn:diffusion generator} is the limit of the generators $A_N$ .
\begin{lem}
\label{lemma:martingale}
For any $p\in M$, any limit point $\xi$ of $\lbrace \xi^N\rbrace$ and any $f\in \mathcal{C}^{\infty}_K$, we have 
\ba
\label{e:martP}
f(\xi_t)-f(p)-\int_0^t Af(\xi_s)\,\di s,\quad \forall t\geq 0,
\ea
is a martingale.
\end{lem}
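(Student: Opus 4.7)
The plan is to derive the martingale property for the limit $\xi$ as a consequence of the analogous (exact) martingale property for each pre-limit process $\xi^N$, using the uniform convergence $A_Nf\to Af$ from Proposition~\ref{prop:limit generator}, the weak convergence $\xi^N\Rightarrow\xi$, and the almost sure path continuity of $\xi$ (Proposition~\ref{prop:continuous path}).

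\textbf{Step 1: Martingale property at the pre-limit level.} For each $N\geq 1$, the process $\xi^N$ is a Feller Markov process with generator $A_N$ given by \eqref{e:AN} and $f\in\mathcal{C}^{\infty}_K\subset\mathcal{C}_0$ lies in the domain. By Dynkin's formula for the pseudo-Poisson semigroup $T^N$ (equivalently, by a direct computation using \eqref{eqn:semigroup} and the representation $\xi^N_t=\zeta^N_{Q_{Nt}}$), the process
\ban
M^{N,f}_t:=f(\xi^N_t)-f(\xi^N_0)-\int_0^t A_Nf(\xi^N_s)\,\di s,\quad t\ge 0,
\ean
is an $(\mathcal{F}^N_t)$-martingale under $\mathbf{P}_p$.

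\textbf{Step 2: Passing to the limit.} Fix $0\le s<t$, $k\ge 1$, times $0\le s_1<\dots<s_k\le s$, and a bounded continuous function $\Phi\colon M^k\to\mathbb{R}$. From Step~1,
\ban
\mathbf{E}_p\Big[\Big(f(\xi^N_t)-f(\xi^N_s)-\int_s^t A_Nf(\xi^N_r)\,\di r\Big)\Phi(\xi^N_{s_1},\dots,\xi^N_{s_k})\Big]=0.
\ean
Let $N\to\infty$ along any subsequence such that $\xi^N\Rightarrow\xi$ in $D([0,\infty),M)$. By Proposition~\ref{prop:continuous path} the limit $\xi$ has continuous paths almost surely, so the finite-dimensional evaluation maps at $s_1,\dots,s_k,s,t$ and the integral functional $\gamma\mapsto\int_s^t Af(\gamma(r))\,\di r$ (being continuous on $C([0,\infty),M)$ and bounded on bounded sets) are continuous at $\mathbf{P}_p$-a.e.\ $\xi$-path, in the Skorokhod topology. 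Since $f$ is bounded and $\|Af\|_{\infty}<\infty$ (the coefficients of $A$ are locally bounded and $f$ has compact support, by Proposition~\ref{prop:limit generator}), the above integrand is uniformly bounded in $N$. Moreover,
\ban
\Big\|\int_s^t A_Nf(\xi^N_r)\,\di r-\int_s^t Af(\xi^N_r)\,\di r\Big\|_\infty\le (t-s)\|A_Nf-Af\|_\infty\to 0
\ean
by Proposition~\ref{prop:limit generator}. Combining weak convergence with the continuous mapping theorem (or the Skorokhod representation), the dominated convergence theorem, and this uniform generator convergence, we can pass to the limit to obtain
\ban
\mathbf{E}_p\Big[\Big(f(\xi_t)-f(\xi_s)-\int_s^t Af(\xi_r)\,\di r\Big)\Phi(\xi_{s_1},\dots,\xi_{s_k})\Big]=0.
\ean
Since $k$, the times $s_i$, and the continuous bounded $\Phi$ are arbitrary, this identifies
$f(\xi_t)-f(\xi_0)-\int_0^t Af(\xi_r)\,\di r$
as a martingale with respect to the natural filtration of $\xi$, which proves \eqref{e:martP}.

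\textbf{Main obstacle.} The only delicate point is the joint passage to the limit in the integral term $\int_s^t A_Nf(\xi^N_r)\,\di r$: one must simultaneously handle that the integrand (the function) changes with $N$ and that the path (at which it is evaluated) changes with $N$. This is resolved cleanly by the uniform convergence $A_Nf\to Af$ from Proposition~\ref{prop:limit generator} combined with the continuity of the path-functional $\gamma\mapsto\int_s^t Af(\gamma(r))\,\di r$ on $C([0,\infty),M)$; the continuity of almost every limit path (Proposition~\ref{prop:continuous path}) is what allows us to evaluate the Skorokhod-continuous functional at $\xi$.
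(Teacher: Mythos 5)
Your proposal is correct and follows essentially the same route as the paper: establish the exact martingale identity for each $\xi^N$ with generator $A_N$, test it against bounded continuous functions of finitely many marginals, and pass to the limit by splitting $A_Nf=Af+(A_Nf-Af)$, using the uniform convergence $\lVert A_Nf-Af\rVert\to 0$ from Proposition~\ref{prop:limit generator} together with the a.s.\ path continuity of $\xi$ (the paper invokes convergence of finite-dimensional distributions plus Fubini where you invoke the continuous mapping theorem for the path functional, which is an equivalent step).
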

\begin{proof}
It suffices to show that for any $l\geq 1$, any $h_1,\dots,h_l\in \mathcal{C}_b(M)$, any $0\leq s\leq t$,
$s_1,\dots,s_l\in [s,t]$,
and any $f\in \mathcal{C}^{\infty}_K$, the following holds.
\begin{align}
\label{eq:martA}
\mathbf{E}\Big[ \Big(  f(\xi_t)-f(\xi_s)-\int_s^t Af(\xi_r)\,\di r \Big) \prod_{j=1}^l h_j(\xi_{s_j})   \Big]=0.
\end{align} 
Since $\xi^N$ is a Markov process for all $N\geq 1$, it follows for all $0\leq s\leq t$ that 
\ban
f(\xi^N_t)-f(\xi^N_s)-\int^t_s A_N f(\xi^N_r)\,\di r
\ean
is a martingale. Hence for each $N\geq 1$
\begin{align}
\mathbf{E}\Big[ \Big(  f(\xi^N_t)-f(\xi^N_s)-\int_s^t A_Nf(\xi^N_r)\,\di r \Big) \prod_{j=1}^l h_j(\xi^N_{s_j})   \Big]=0.
\end{align}
Separate the formula \eqref{eq:martA} into two terms, and let $\lbrace\xi^{N_k}\rbrace$ be a subsequence converging weakly to $\xi$.   Since $\xi$ has continuous paths almost surely, the  finite dimensional
distributions of $\xi^{N_k}$ always converge weakly to those of $\xi$ (Theorem 3.7.8 of \cite{EthierK-86}). Thus we have
\ba
\mathbf{E}\Big[ \Big(  f(\xi_t)-f(\xi_s)\Big) \prod_{j=1}^l h_j(\xi_{s_j})   \Big]
=\lim_{N_k\to\infty}\mathbf{E}\Big[ \Big(  f(\xi^{N_k}_t)-f(\xi^{N_k}_s)\Big) \prod_{j=1}^l h_j(\xi^{N_k}_{s_j})   \Big].
\ea
Furthermore,
\ba
\mathbf{E}\Big[\int_s^t A_{N_k}f(\xi^{N_k}_r)\,\di r \cdot \prod_{j=1}^l h_j\left(\xi^{N_k}_{s_j}\right)   \Big]
&=\mathbf{E}\Big[\int_s^t A f(\xi^{N_k}_r)\,\di r \cdot \prod_{j=1}^l h_j\left(\xi^{N_k}_{s_j}\right)   \Big]\\
&+\mathbf{E}\Big[\int_s^t (A_{N_k}-A)f(\xi^{N_k}_r)\,\di r \cdot \prod_{j=1}^l h_j\left(\xi^{N_k}_{s_j}\right)   \Big]
\ea
and the latter summand vanishes as $N_k\to\infty$ because the functions $h_j$ are bounded
and by Proposition \ref{prop:limit generator}  
\begin{align*}
\lim_{N\rightarrow\infty}\lVert (A_{N_k}-A)f\rVert =0,\quad \forall f\in \mathcal{C}^{\infty}_K.
\end{align*}
To treat the first term, since $x\mapsto Af(x)$ is continuous and bounded, we have for each $r\in[s,t]$
\ba
\lim_{N_k\rightarrow\infty} \mathbf{E}\Big[ A f(\xi^{N_k}_r) \cdot \prod_{j=1}^l h_j\left(\xi^{N_k}_{s_j}\right)   \Big]= \mathbf{E}\Big[ A f(\xi_r) \cdot \prod_{j=1}^l h_j(\xi_{s_j})   \Big].
\ea
Thus by Fubini's and Lebesgue's theorems  we get 
\ba
\lim_{N\to\infty} \mathbf{E}\Big[\int_s^t A f(\xi^N_r)\,\di r \cdot \prod_{j=1}^l h_j(\xi^N_{s_j})   \Big]
&=\int_s^t \lim_{N\to\infty} \mathbf{E}\Big[ A f(\xi^N_r) \cdot \prod_{j=1}^l h_j(\xi^N_{s_j})   \Big]\,\di r\\
&= \mathbf{E}\Big[\int_s^t A f(\xi_r)\,\di r \cdot \prod_{j=1}^l h_j(\xi_{s_j})   \Big].
\ea
and \eqref{eq:martA} is established.
\end{proof}

\begin{prp}
The martingale problem \eqref{e:martP} has a unique solution which is stochastically complete. 
Hence the sequence $\lbrace \xi^N\rbrace$ converges weakly.
\end{prp}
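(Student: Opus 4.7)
The plan has three parts: existence of a solution, verification of stochastic completeness, and uniqueness; the weak convergence statement then follows routinely from tightness plus well-posedness.

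Existence is essentially already in hand. By Proposition~\ref{prop:simply connected case tightnesss} the family $\{\xi^N\}$ is relatively compact in $D([0,\infty),M)$, Proposition~\ref{prop:continuous path} places every subsequential limit in $C([0,\infty),M)$, and Lemma~\ref{lemma:martingale} shows any such limit solves the martingale problem \eqref{e:martP} for $A$ on test functions in $\mathcal{C}^\infty_K$. So I would only need to record this observation and pass to uniqueness.

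For uniqueness I would argue locally, then globalize. By Proposition~\ref{prop:limit generator}, in every smooth chart $A$ is a strictly elliptic second order differential operator with smooth coefficients. On any precompact coordinate neighbourhood $V \Subset U\subset M$ one can extend/modify the coefficients of $A$ outside $V$ to obtain an elliptic operator $\tilde A$ on $\mathbb{R}^m$ with bounded smooth coefficients and uniform ellipticity. The martingale problem for $\tilde A$ is well-posed by the classical Stroock--Varadhan theorem, and up to the first exit time $\sigma_V$ from $V$ the laws of any two solutions for $A$ agree with the unique law for $\tilde A$. Fixing a countable exhaustion $U_1\Subset U_2\Subset \cdots$ of $M$ by precompact open sets and letting $\sigma_n$ denote the first exit from $U_n$, the strong Markov property together with local uniqueness determines the joint law of $(\xi_{t\wedge\sigma_n})_{t\geq 0}$ inductively. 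Global uniqueness therefore reduces to showing $\sigma_n\to\infty$ almost surely, which is exactly stochastic completeness.

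Stochastic completeness I would obtain by transferring Lemma~\ref{l:cc} to the limit. Given $p\in M$, $\varepsilon>0$, $T>0$, the compact set $K_p(\varepsilon)$ from \eqref{e:K}--\eqref{e:R} satisfies $\inf_N \mathbf P_p(\xi^N_t\in K_p(\varepsilon),\ t\in[0,T])\geq 1-\varepsilon$. The set $\{\gamma\in C([0,\infty),M): \gamma(t)\in K_p(\varepsilon)\text{ for all }t\in[0,T]\}$ is closed in the uniform-on-compacts topology of $C([0,\infty),M)$, and since the limit $\xi$ has continuous paths, the Portmanteau theorem applied along the chosen subsequence gives $\mathbf P_p(\xi_t\in K_p(\varepsilon),\ t\in[0,T])\geq 1-\varepsilon$. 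Letting $\varepsilon\downarrow 0$ shows $\xi$ stays in compacts up to any finite horizon, so $\sigma_n\uparrow\infty$ almost surely. This both supplies the required non-explosion to close the uniqueness argument and establishes the stochastic completeness claim in the sense of \cite[\S 4.2]{Hsu}.

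Finally, well-posedness converts tightness into convergence: every subsequential weak limit of $\{\xi^N\}$ solves the martingale problem and starts at $p_0$, hence by uniqueness all limits coincide in law, so the whole sequence converges weakly. I expect the genuinely delicate step to be the globalization of Stroock--Varadhan uniqueness, since the passage from the Euclidean statement to $M$ needs the coordinate-free manipulation of stopping times and the nontrivial input of bounded geometry through Lemma~\ref{l:cc}; the Portmanteau step and the final assembly are routine.
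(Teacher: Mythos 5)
Your proposal is correct and follows essentially the same route as the paper: localize $A$ in charts, extend the coefficients to obtain a well-posed Euclidean martingale problem via Stroock--Varadhan, and glue the localized uniqueness statements along an exhaustion using stopping times and the strong Markov property. The only difference is presentational: the paper delegates the gluing and the non-explosion bookkeeping to the localization theorems of Ethier--Kurtz (Theorems 4.6.1, 4.6.5 and 4.6.6), whereas you carry out the gluing by hand and make the stochastic completeness explicit by transferring the compact containment condition of Lemma~\ref{l:cc} to the limit via Portmanteau --- a step the paper leaves implicit.
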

\begin{proof}
The well-posedness follows from the well-posedness of the martingale problem in $\mathbb R^m$. Indeed, in any chart $U$, the generator $A$ is a second-order 
strongly elliptic operator with smooth coefficients. We can extend the generator on $U^c$ such that its
coefficients are uniformly Lipschitz.
Then the martingale problem is well-posed, e.g.\ by Theorem 5.1.4 in \cite{Stroock}. 
By Theorem 4.6.1 of \cite{EthierK-86}, the stopped martingale is also well posed for any initial distribution. 
The localized solutions in countably many charts can glued together by Lemma 4.6.5 and Theorem 4.6.6 in \cite{EthierK-86}, see also Section 4.11 in
\cite{Kolokoltsov-11}.
\end{proof}

In summary, we have shown the sequence $\lbrace \xi^N\rbrace$ converges weakly to some process $\xi$ on $M$ 
which is a solution to a well-posed martingale problem. This completes the proof of Theorem \ref{thm:convergence main theorem}.

Eventually let us also prove, since it is an important and useful property,  that the  limit process is  Feller.
\begin{prp}
The limit process $\xi$ is Feller, i.e.\ its semigroup preserves $\mathcal{C}_0(M)$. 
\end{prp}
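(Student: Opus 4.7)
The plan is to verify both defining properties of a Feller semigroup: that $T_t$ maps $\mathcal{C}_0(M)$ into itself, and that $T=(T_t)_{t\geq 0}$ is strongly continuous at $t=0$. I would first establish that the family of laws $\{\mathbf P_p\}_{p\in M}$ depends continuously on $p$ in the weak topology on path space. All the tightness ingredients obtained so far --- the Aldous estimate of Lemma \ref{l:AC} and the compact containment of Lemma \ref{l:cc} --- rest on bounds (Lemmas \ref{lemma:generator bounds} and \ref{lemma:operatorlocalness}) whose constants are uniform in the starting point. Therefore, for any sequence $p_n\to p$, the family $\{\mathbf P_{p_n}\}_{n\geq 1}$ of laws of the limit processes is itself tight. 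Any weak limit point solves the martingale problem for $A$ with initial distribution $\delta_p$, so the well-posedness established in the previous proposition forces the convergence $\mathbf P_{p_n}\Rightarrow \mathbf P_p$. Applied to any bounded continuous $f$ at a fixed time $t$, this yields continuity of $p\mapsto T_tf(p)$.

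Next I would show that $T_tf$ vanishes at infinity whenever $f\in \mathcal{C}_0(M)$. Fix $\epsilon>0$ and a compact $K_f$ outside of which $|f|<\epsilon/2$, and set $\epsilon'=\epsilon/(2\|f\|+1)$. Let $R=R(\epsilon',t)$ be the radius appearing in \eqref{e:R}. Whenever $d(p,K_f)>R$, the ball $K_p(\epsilon')$ is disjoint from $K_f$, so Lemma \ref{l:cc} gives $\mathbf P_p(\xi^N_t\in K_f)\leq \epsilon'$ uniformly in $N$. Since $K_f$ is closed and $\xi^N\Rightarrow \xi$ under $\mathbf P_p$, the Portmanteau theorem preserves this bound in the limit, hence $\mathbf P_p(\xi_t\in K_f)\leq \epsilon'$. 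This gives $|T_tf(p)|\leq \|f\|\cdot \epsilon'+\epsilon/2\leq \epsilon$ for all $p$ with $d(p,K_f)>R$, as required.

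For strong continuity at $t=0$, I would first take $f\in \mathcal C^\infty_K$. The martingale property from Lemma \ref{lemma:martingale} yields the Dynkin identity $T_tf(p)-f(p)=\mathbf E_p\int_0^t Af(\xi_s)\,\di s$, and since $Af$ is continuous with compact support by Proposition \ref{prop:limit generator}, we get $\|T_tf-f\|\leq t\|Af\|\to 0$ as $t\to 0$. Since $\mathcal C^\infty_K$ is dense in $\mathcal C_0$ and each $T_t$ is a contraction (the trivial bound $|T_tf(p)|\leq \mathbf E_p|f(\xi_t)|\leq \|f\|$), a standard $\varepsilon/3$ approximation extends strong continuity to all of $\mathcal C_0(M)$.

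The main obstacle is the first step: the Feller property requires knowing that the laws $\{\mathbf P_p\}_{p\in M}$ form a well-defined Markov family that depends continuously on the initial point, and neither the construction of $\xi$ via weak limits of $\xi^N$ nor the martingale problem formulation yields this for free. The key ingredient that makes it work is that the tightness and compact containment estimates for $\xi^N$ were established with constants independent of the starting point, so they transfer to any family $\{\mathbf P_{p_n}\}$ and combine with the uniqueness of the martingale problem for $A$ to give the required continuous dependence.
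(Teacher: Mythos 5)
Your proposal is correct in substance and proves all three required properties, but it takes a partly different route from the paper. For the continuity of $p\mapsto T_tf(p)$ the paper simply observes that in any chart $\xi$ is a non-degenerate diffusion with smooth coefficients and invokes the resulting local regularity, whereas you give a self-contained probabilistic argument: tightness of $\{\mathbf P_{p_n}\}$ for $p_n\to p$ (using that all the estimates of Lemmas \ref{lemma:generator bounds}--\ref{l:cc} are uniform in the starting point) combined with well-posedness of the martingale problem, which is essentially the standard Ethier--Kurtz continuity-in-the-initial-condition argument; your version is longer but arguably more rigorous given what the paper has actually established. For strong continuity at $t=0$ the paper uses the uniform bound $\sup_p\mathbf P_p(d(p,\xi_t)>\delta)\leq C(\delta)t$ inherited from Lemma \ref{lemma:operatorlocalness}, while you use Dynkin's formula $\lVert T_tf-f\rVert\leq t\lVert Af\rVert$ on $\mathcal C^\infty_K$ plus a density/contraction argument; both are standard and correct. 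The vanishing-at-infinity step is essentially identical to the paper's, which phrases it via the exit time $\tau^R$ rather than via weak convergence of the marginals.

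One small technical slip: in the vanishing-at-infinity step you invoke the Portmanteau theorem ``since $K_f$ is closed,'' but for closed sets Portmanteau gives $\limsup_N\mathbf P_p(\xi^N_t\in K_f)\leq \mathbf P_p(\xi_t\in K_f)$, which is the wrong direction for transferring the upper bound to the limit. The correct route is to apply the open-set half of Portmanteau to the open complement of the closed ball $K_p(\epsilon')$: from $\mathbf P_p(\xi^N_t\notin K_p(\epsilon'))\leq\epsilon'$ one gets $\mathbf P_p(\xi_t\notin K_p(\epsilon'))\leq\liminf_N\mathbf P_p(\xi^N_t\notin K_p(\epsilon'))\leq\epsilon'$, and then $K_f\subseteq K_p(\epsilon')^c$ yields the desired bound. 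This is exactly the estimate the paper extracts via the exit time, so the fix is immediate and does not affect the validity of your argument.
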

\begin{proof}
In any chart, $\xi$ is a non-degenerate diffusion with smooth coefficients, hence its semigroup maps $\mathcal C_0(M)$ to $\mathcal {C}(M)$.

Denote $(T_t)_{t\geq 0}$ the semigroup of $\xi$ as usual. Let $f\in \mathcal C_0(M)$, $t>0$ and $\varepsilon>0$ be fixed. Choose a compact set $C_\varepsilon$ such that
$|f(x)|\leq \varepsilon$ for $x\notin C_\varepsilon$. Define $R(\varepsilon,t)$ as in \eqref{e:R} in Lemma~\ref{l:cc}.
By this lemma,
for any $p\in M$ such that $d(p,C_\varepsilon)>R(\varepsilon,t)$, we have 

\ba
|\mathbf E_p f(\xi_t)|&\leq \mathbf E_p |f(\xi_t)|\mathbb I(\tau^R\leq t)+\mathbf E_p |f(\xi_t)|\mathbb I(\tau^R> t)\\
&\leq \|f\|\cdot \mathbf P_p(\tau^R\leq t) + \varepsilon\leq  (\|f\| +1) \varepsilon.
\ea
Thus $(T_t)(f)$ vanishes at infinity for $f\in \mathcal{C}_0$.
As $\xi$ is a limit point of $\lbrace \xi^N\rbrace$, Lemma~\ref{lemma:operatorlocalness} implies
\begin{align*}
\sup_{p\in M} \mathbf{P}_p(d(p,\xi_t)>\delta)\leq C(\delta)t,\ \forall \delta>0,\ \forall t\geq 0.
\end{align*}
The strong continuity of the semigroup $(T_t)$ follows.
 \end{proof}
 
%  
% \bibliographystyle{plain}
% 
% % \bibliography{biblio-new}
% \bibliography{finsler-bib}

\vspace{2cm}

\begin{minipage}{.33\textwidth} {\small
\begin{tabbing}
xxxxxxxxxxxxxxxxxxxxxxxxxxxxxxxxx\=
\kill
Tianyu Ma   \>    \\
 Institut für Mathematik \>  \\
Friedrich--Schiller Universit\"at Jena \>  \\
07737 Jena, Germany  \>  \\
 
{\tt tianyuzero.ma@alum.utoronto.ca} \>
 
\end{tabbing}
}\end{minipage}\begin{minipage}{.33\textwidth}
{\small
\begin{tabbing}
xxxxxxxxxxxxxxxxxxxxxxxxxxxxxx\=
\kill
Vladimir S. Matveev  \>    \\
 Institut für Mathematik \>  \\
Friedrich--Schiller Universit\"at Jena \>  \\
07737 Jena, Germany  \>  \\
{\tt vladimir.matveev@uni-jena.de} \>
 
\end{tabbing}
}\end{minipage}\begin{minipage}{.33\textwidth}{\small
\begin{tabbing}
xxxxxxxxxxxxxxxxxxxxxxxxxxxxxxxx\=
\kill
Ilya Pavlyukevich \>    \\
 Institut für Mathematik \>  \\
Friedrich--Schiller Universit\"at Jena \>  \\
07737 Jena, Germany  \>  \\
{\tt ilya.pavlyukevich@uni-jena.de } \>
 
\end{tabbing}
}\end{minipage}
\end{document}